\documentclass[11pt]{article}

\usepackage[margin=1in]{geometry}
\usepackage{amsmath,amssymb,amsthm,mathtools}
\usepackage{bm}
\usepackage{xcolor}
\usepackage{enumitem}
\usepackage{graphicx}
\usepackage{hyperref}
\usepackage{booktabs}

\hypersetup{
  colorlinks=true,
  linkcolor=blue,
  citecolor=blue,
  urlcolor=blue
}

\newtheorem{theorem}{Theorem}[section]
\newtheorem{proposition}[theorem]{Proposition}
\newtheorem{lemma}[theorem]{Lemma}
\newtheorem{corollary}[theorem]{Corollary}
\newtheorem{remark}[theorem]{Remark}
\numberwithin{equation}{section}

\newcommand{\E}{\mathbb{E}}
\newcommand{\Pbb}{\mathbb{P}}
\newcommand{\R}{\mathbb{R}}

\newcommand{\1}{\mathbf 1}
\newcommand{\sech}{\operatorname{sech}}

\newcommand{\cK}{\mathcal K}

\newcommand{\dd}{\,\mathrm d}

\newcommand{\Var}{\operatorname{Var}}
\newcommand{\Cov}{\operatorname{Cov}}

\newcommand{\braket}[1]{\left\langle #1\right\rangle}

\title{A quantitative replica-symmetric bound for Sherrington--Kirkpatrick model in the entire de Almeida--Thouless region}
\author{
  Seiichiro Kusuoka\thanks{Graduate School of Science, Kyoto University. E-mail: \texttt{kusuoka@math.kyoto-u.ac.jp}}
  \and
  Shuta Nakajima\thanks{Department of Mathematics, Keio University. E-mail: \texttt{njima@keio.jp}}
}
\date{}

\begin{document}
\maketitle

\begin{abstract}
We consider the Sherrington--Kirkpatrick model with inverse temperature $\beta>0$ and deterministic external
field $h>0$.  Let $q$ be  the replica-symmetric fixed point defined by 
$ q=\mathbb{E}\tanh^2\bigl(h+\beta\sqrt q\,Z\bigr)$, where $Z$ is a standard Gaussian random variable.  
We prove that, uniformly on compact subsets of the strict de Almeida--Thouless region, defined by $\beta^2\mathbb{E}\sech^4(h+\beta\sqrt q\,Z)<1$, the overlap satisfies the concentration estimate:
$$
 \mathbb{E}\braket{(R_{12}-q)^2}=O(N^{-1}).
$$
  As a consequence, we
obtain an $O(N^{-1})$ error bound for the replica-symmetric free energy   and
identify the finite-volume replicon susceptibility. Moreover, we prove the central limit theorem for the overlap in this region. 

The present paper provides an alternative proof of the replica-symmetric free energy formula in the de Almeida--Thouless region, which was recently established by Lopatto [arXiv:2604.11921]. An advantage of our approach is that  it establishes an explicit quantitative bound and yields the replica-symmetric free energy formula  as a consequence. 

Our proof is self-contained and does not use the identification of
the limiting free energy with the Parisi variational formula.

\end{abstract}

\section{Introduction}\label{sec:intro}

\subsection{Model and main results}

In this paper, we consider the Sherrington--Kirkpatrick model.
Let $N \in {\mathbb N}$ and $\Sigma_N\coloneqq\{-1,1\}^N$.  For $\beta>0$ and $h>0$, let
\[
 H_N(\sigma)
 \coloneqq
 \frac{\beta}{\sqrt{2N}}\sum_{i,j=1}^Ng_{ij}\sigma_i\sigma_j
 +h\sum_{i=1}^N\sigma_i , \quad \sigma \in \Sigma _N
\]
where $(g_{ij})_{1\le i,j\le N}$ are independent standard normal random variables. 

Let $q= q(\beta,h)$ be the unique solution of
\begin{equation}
 q=\mathbb{E}\tanh^2\bigl(h+\beta\sqrt q\,Z\bigr),
 \qquad Z\sim N(0,1),
 \label{eq:q}
\end{equation}
and write $q\coloneqq q(\beta,h)$.
The existence and uniqueness for all $\beta>0$ and  $h>0$ are obtained by a standard argument (see, for example, \cite[Proposition~1.3.8]{Talagrand}).
The continuity of $q(\beta ,h )$ in $(\beta , h)$ follows directly from  uniqueness.
Indeed, if $(\beta_n,h_n)\to(\beta,h)$, every convergent subsequence of $q(\beta_n,h_n)\in[0,1]$ has, by the dominated convergence theorem, the limit solving the fixed-point equation \eqref{eq:q} at $(\beta,h)$.
The uniqueness identifies that limit with $q(\beta,h)$, so
the entire sequence converges.
Set
\begin{equation}
 r\coloneqq\mathbb{E}\tanh^4\bigl(h+\beta\sqrt q\,Z\bigr),
 \label{eq:ra}
\end{equation}
and let
\begin{equation}
 \alpha\coloneqq\beta^2(1-2q+r)
 =\beta^2\mathbb{E}\sech^4\bigl(h+\beta\sqrt q\,Z\bigr).
 \label{eq:alpha}
\end{equation}
We work on a fixed compact set
\begin{equation}
 \cK\Subset\{(\beta,h):\beta>0,\ h>0,\ \alpha(\beta,h)<1\}.
 \label{eq:K}
\end{equation}
Then, there exists $\delta_{\cK}>0$ such that
\begin{equation}
 1-\alpha\ge\delta_{\cK}
 \qquad\text{on }\cK.
 \label{eq:delta}
\end{equation}
Since the maps $q$,$r$ and $\alpha$ are continuous on $\cK$, the compactness and $h>0$ imply
\begin{equation}
 0<q_{\cK}\coloneqq\inf_{(\beta,h)\in\cK}q
 \le \sup_{(\beta,h)\in\cK}q<1.
 \label{eq:qcompact}
\end{equation}
This inequality follows from the facts that $q=0$ would contradict \eqref{eq:q} with $h>0$, and that $q<1$ follows from the inequality $\tanh^2(h+\beta\sqrt q\,Z)<1$.

For $0\le s\le 1$, define the smart-path Hamiltonian $H_{N,s}$ by 
\begin{equation}
 H_{N,s}(\sigma)
 \coloneqq
 \frac{\beta\sqrt s}{\sqrt{2N}}
 \sum_{i,j=1}^Ng_{ij}\sigma_i\sigma_j
 +\sum_{i=1}^N
 \bigl(h+\beta\sqrt{(1-s)q}\,z_i\bigr)\sigma_i,
 \label{eq:path}
\end{equation}
where $(z_i)$ is a family of independent standard Gaussian random variables. Note that $H_{N,1}=H_N.$
We write $\braket{\cdot}_s$ for the Gibbs expectation, i.e. 
\[
\braket{F}_s \coloneqq \frac{1}{Z_{N, s}}\sum _{\sigma \in \Sigma _N} F(\sigma ) e^{H_{N, s}(\sigma)}
\]
for a function $F$ on $\Sigma _N$, where $Z_{N,s} \coloneqq \sum _{\sigma \in \Sigma _N} e^{H_{N,s}(\sigma )}$.
For replicas $\sigma^1,\sigma^2,\ldots$ which are sampled independently from the Gibbs measure for fixed disorder, put
\[
 R_{ab}\coloneqq\frac1N\sum_{i=1}^N\sigma_i^a\sigma_i^b.
\]
Set
\[
 \nu_s[f]\coloneqq\mathbb{E}\braket{f}_s,\qquad
 Q_{ab}\coloneqq R_{ab}-q.
\]
We also write
\begin{equation}
 \phi_N(s)\coloneqq\frac1N\mathbb{E}\log Z_{N,s}
 \label{eq:pathfreeenergy}
\end{equation}
and define
\begin{equation}
 A_s\coloneqq\nu_s[Q_{12}^2],\qquad
 B_s\coloneqq\nu_s[Q_{12}Q_{13}],\qquad
 C_s\coloneqq\nu_s[Q_{12}Q_{34}].
 \label{eq:ABC}
\end{equation}

Now we state the main theorem of this paper.

\begin{theorem}
\label{thm:main}
Under the setting above, the overlap concentration holds in the sense that there exists a constant $C_{\cK}<\infty$ such that, for every $N\ge1$,
\begin{equation}\label{eq:main1}
 \sup_{\substack{(\beta,h)\in\cK\\0\le s\le1}}
 N\,\nu_s[Q_{12}^2]\le C_{\cK}.
\end{equation}
Consequently, the expected free energy 
\[
 \phi_N(\beta,h)\coloneqq \phi_N(1) = 
 \frac1N\mathbb{E}\log\sum_{\sigma\in\Sigma_N}e^{H_N(\sigma)}
\]
and the replica-symmetric free energy
\[
 \phi^{\mathrm{RS}}(\beta,h)
 \coloneqq
 \log2+\mathbb{E}\log\cosh(h+\beta\sqrt q\,Z)
 +\frac{\beta^2}{4}(1-q)^2
\]
satisfy the quantitative estimate uniformly on $\cK$:
\begin{equation}\label{eq:main2}
 0\le\phi^{\mathrm{RS}}(\beta,h)-\phi_N(\beta,h)
 \le\frac{C_{\cK}}N.
\end{equation}
Moreover, it holds that
\begin{equation}\label{eq:main3}
 N\bigl(A_s-2B_s+C_s\bigr)
 =
 \frac{\alpha}{\beta^2(1-s\alpha)}+o_{\cK}(1).
\end{equation}
uniformly for $(\beta,h,s)\in\cK\times[0,1]$.
Here and below, the notation ``$o_{\cK}(1)\to0$" as $N\to\infty$ means the uniform convergence over the compact set $\mathcal K$.
\end{theorem}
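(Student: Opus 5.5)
We will prove \eqref{eq:main1} by a differential-inequality (bootstrap) argument along the smart path $s\mapsto\nu_s$, in the spirit of Talagrand's second-moment computations, but organised so that the only structural input is the strict AT condition $\alpha\le 1-\delta_{\cK}$.

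\textbf{Step 1: derivatives along the path.} By Gaussian integration by parts,
\[
\frac{d}{ds}\nu_s[f]=\frac{\beta^2N}{2}\Bigl(\sum_{a<b\le n}\nu_s[fQ_{ab}^2]-n\sum_{a\le n}\nu_s[fQ_{a,n+1}^2]+\tfrac{n(n+1)}{2}\nu_s[fQ_{n+1,n+2}^2]\Bigr)
\]
(up to diagonal terms which are $O(1/N)$), for $f$ depending on $n$ replicas. At $s=0$ the spins are independent, and $\nu_0[\sigma_i]^2$ averages to $q$ by \eqref{eq:q}. Hence $A_0,B_0,C_0$ are explicit and of order $1/N$. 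In particular, $N(A_0-2B_0+C_0)=1-2q+r=\alpha/\beta^2$, matching \eqref{eq:main3} at $s=0$.

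\textbf{Step 2: linear system for $A,B,C$.} Apply the formula with $f=Q_{12}^2,Q_{12}Q_{13},Q_{12}Q_{34}$. Each derivative becomes $\beta^2N$ times a combination of fourth-order overlap moments. We then use the cavity decomposition: for the last spin, write $Q_{ab}=\frac1N(\sigma^a_N\sigma^b_N-q)+Q^-_{ab}$ and use the $\nu_{s,t}$ interpolation in the last spin, as in Talagrand, Sections 1.8--1.10. This closes the fourth moments in terms of $A,B,C$, with remainders of order $\nu_s[|Q_{12}|^3]+N^{-3/2}$.

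The combination that matters is the replicon $D_s\coloneqq A_s-2B_s+C_s$. It should satisfy
\[
D_s=\frac1N(1-2q+r)+s\alpha D_s+\mathcal E_s .
\]
Equivalently, in differential form, $\frac{d}{ds}D_s=\alpha D_s^{\,2}N\beta^2$-type terms. We will instead derive the cavity identity directly at each $s$:
\[
(1-s\alpha)D_s=\frac{\alpha}{\beta^2 N}+O\bigl(N^{-1/2}\nu_s[|Q_{12}|^3]+\nu_s[|Q_{12}|^3]+N^{-3/2}\bigr).
\]
The longitudinal combinations $B-C$ and $C$ have the non-degenerate coefficients $1-s\beta^2(1-4q+3r)$ and $1-s\beta^2(1-q)^2$-type. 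These dominate $1-s\alpha$ by the standard AT eigenvalue ordering, so they also invert with constants depending only on $\delta_{\cK}$.

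\textbf{Step 3: closing the bootstrap (main obstacle).} The difficulty is controlling the cubic remainder $\nu_s[|Q_{12}|^3]$ without already knowing concentration. Doing so in the whole AT region, rather than just at high temperature, is exactly the new content. Our first attempt is a continuity argument in $s$. Let $s^*$ be the supremum of the $s$ such that $N\nu_u[Q_{12}^2]\le M$ for all $u\le s$, with $M$ large. On $[0,s^*]$, the derivative formula gives $|\frac{d}{ds}\nu_s[Q_{12}^{2k}]|\le C N\nu_s[Q_{12}^{2k+2}]$-type bounds. To get higher moments we will use a Gaussian concentration/Latała-type bound $\nu_s[Q_{12}^{4}]\le C(M)/N^2$; the fallback is to prove the fourth-moment system simultaneously by the same cavity method. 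The key point is that $|\nu_s[Q_{12}^3]|\le\nu_s[Q_{12}^4]^{1/2}A_s^{1/2}=O(N^{-3/2})$, so the remainder is $o(1/N)$. Step 2 then yields $NA_s\le C_{\cK}/\delta_{\cK}$, which is strictly less than $M$ if $M$ is chosen large. Continuity of $s\mapsto\nu_s$ for fixed $N$ therefore forces $s^*=1$. Uniformity over $\cK$ follows because all constants depend only on $\delta_{\cK}$ and $q_{\cK}$.

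\textbf{Step 4: consequences.} Differentiating \eqref{eq:pathfreeenergy} gives
\[
\phi_N'(s)=-\frac{\beta^2}{4}\nu_s[Q_{12}^2]+O(1/N)\cdot 0+\text{exact terms}.
\]
More precisely, $\phi_N(1)-\phi^{\mathrm{RS}}=-\frac{\beta^2}{4}\int_0^1\nu_s[Q_{12}^2]\,ds$ up to an explicit $O(1/N)$ correction from diagonal terms. This yields \eqref{eq:main2}: the lower bound $0\le$ is Guerra's bound, and the upper bound follows from \eqref{eq:main1}. Finally, \eqref{eq:main3} is the identity from Step 2 divided by $1-s\alpha\ge\delta_{\cK}$, with the remainder now $o_{\cK}(1)$ after multiplication by $N$.
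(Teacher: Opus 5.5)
The gap is in Step~3, which you correctly flag as the crux, and it is a real obstruction rather than a technicality. Your continuity hypothesis $N\nu_u[Q_{12}^2]\le M$ does not imply $\nu_u[Q_{12}^4]\le C(M)/N^2$. Gibbs mass of order $1/N$ sitting at overlaps $|Q_{12}|\approx\delta$, with $\delta$ of order one, is compatible with $NA_u=O(1)$. Yet it makes $\nu_u[|Q_{12}|^3]\approx\delta^3/N$, which is comparable to $A_u$ itself. Then the cavity inequality $A_s\le C_{\cK}/N+C_{\cK}\nu_s[|Q_{12}|^3]$ cannot return $NA_s\le M/2$, so the bootstrap is not self-improving. (Your Step~2 is exactly the paper's Proposition~\ref{prop:cavity}.)

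The tools you propose do not rule this scenario out. Lata\l{}a-type exponential bounds are high-temperature arguments that reach only a subregion of the AT region; they are what limited earlier results to $\beta<1/2$, or a somewhat larger range in the authors' previous work. A cavity system for fourth moments has a fifth-moment remainder. The path derivative $|\frac{d}{ds}\nu_s[Q_{12}^{2k}]|\le CN\nu_s[Q_{12}^{2k+2}]$ likewise climbs the moment hierarchy, so neither route closes.

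The underlying problem is that in your scheme the AT condition enters only through the local invertibility of $1-s\alpha$ and $1-s\beta^2\kappa$. Local stability says nothing about whether a state with overlap far from $q$ carries non-negligible Gibbs weight at some $s$. That is a large-deviation event. Neither a fixed-$N$ continuity argument nor a moment expansion around $q$ controls it uniformly in $N$, and excluding it is precisely what is new in reaching the whole AT region.

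The missing ingredient is a global fixed-deviation bound $\nu_s[|Q_{12}|\ge\varepsilon]\le C_{\cK,\varepsilon}e^{-c_{\cK,\varepsilon}N}$ (Corollary~\ref{cor:tail}). The paper obtains it in four steps.
\begin{enumerate}
\item It proves a two-replica Guerra--Talagrand (one-step RSB) bound for the constrained partition function, $\frac1N\E\log Z^{(2)}_{N,s}(v)\le 2P_s^*-c_{\cK}(v-q)^2$ for all $v\in[-1,1]$. The proof uses the AT condition globally: it needs the sign of $g_s(u)-u$ on all of $[0,1]$ (including Lemma~\ref{lem:uppercomparison}) and a separate strict inequality for $v<-q$.
\item This bounds the coupled pressure $p^{(2)}_{N,s}(\rho_0)$ for $\rho_0<2c_{\cK}$.
\item Convexity in $\rho$, the identity $D_N'(s)=\frac{\beta^2}{4}A_s$ with $D_N(0)=0$, and Gronwall applied to $D_N(s)=P_s^*-\phi_N(s)$ then give $\frac1{2N}\E\log\langle e^{\rho_0NQ_{12}^2/2}\rangle_s=o(1)$.
\item Gaussian concentration upgrades this to the exponential tail.
\end{enumerate}
Once the tail is available, $\nu_s[|Q_{12}|^3]\le\varepsilon A_s+8\nu_s[|Q_{12}|\ge\varepsilon]$ absorbs the cubic remainder directly, with no continuity argument or fourth-moment bound. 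The same splitting gives $\nu_s[|Q_{12}|^3]=o_{\cK}(N^{-1})$, which is what \eqref{eq:main3} needs.

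Two minor points. The modes $A_s-4B_s+3C_s$ and $2B_s-3C_s$ form a Jordan block with the single coefficient $1-s\beta^2\kappa$; there is no $1-s\beta^2(1-q)^2$ mode. In this normalization $\phi_N'(s)=\frac{\beta^2}{4}\bigl((1-q)^2-\nu_s[Q_{12}^2]\bigr)$ holds exactly, so \eqref{eq:main2} needs no diagonal correction.
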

It is worth noting that the estimate above was previously known in a sufficiently high-temperature regime from Talagrand's cavity analysis and from an independent unpublished interpolation argument of Lata{\l}{}a; see \cite{Talagrand}.  We recently refined Lata{\l}{}a's argument and enlarged the range of parameters \cite{KusuokaNakajima1}. The main contribution of the present work is to extend this estimate to the entire strict de Almeida--Thouless region. This is expected to be the maximal open region in which replica-symmetric overlap fluctuations remain on the $N^{-1/2}$ scale (see Theorem~\ref{thm:overlap-clt-intro} and Remark~\ref{remark: sigma diverge}).

By taking the limit as $N\rightarrow \infty$ in \eqref{eq:main2},\footnote{For points on the boundary of the de Almeida--Thouless region, the result follows by continuity from the interior. Indeed, the finite-volume free energies are uniformly Lipschitz in the model parameters, and hence their thermodynamic limit is continuous.} we recover replica symmetry
\[
\lim _{N\rightarrow \infty } \phi _N(\beta ,h) = \phi ^{\mathrm{RS}}(\beta ,h)
\]
throughout the entire de Almeida--Thouless region. Establishing replica symmetry in this region had remained an open problem for many years. Very recently, Lopatto \cite{Lopatto} resolved this problem by showing that the Parisi formula agrees with the replica-symmetric formula in this region, building on Talagrand's result that the limiting free energy is given by the Parisi formula \cite{TalagrandParisi}. Our main results provide a quantitative counterpart to this qualitative identification.

 As a consequence of the result above together with a standard cavity argument, we prove the central limit theorem for the overlap in the full strict de Almeida--Thouless region, which was previously known only in the cases where $\beta<1$ and $h=0$, and where $\beta$ is sufficiently small.
 
\begin{theorem}
\label{thm:overlap-clt-intro}
For every fixed $(\beta,h)\in\cK$, with respect to $\nu_1$, 
\[
 \sqrt N\,(R_{12}-q)
 \xrightarrow[N\to\infty]{\mathrm d}
 \mathcal N(0,\sigma^2),
\]
where
\[
 \sigma^2
 :=
 \frac{3(1-2q+r)}{1-\alpha}
 -\frac{2\kappa}{1-\beta^2\kappa}
 -\frac{\zeta}{(1-\beta^2\kappa)^2}>0,\]
\[ 
 \kappa\coloneqq1-4q+3r,
 \quad
 \zeta\coloneqq2q+q^2-3r.
\]
\end{theorem}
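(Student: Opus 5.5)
We prove the CLT by the method of moments, using Theorem~\ref{thm:main} as the a priori input. By \eqref{eq:main1} at $s=1$, every $\nu_1[(\sqrt N Q_{12})^k]$ with $k\le2$ is bounded. The first task is to upgrade this to all moments: $\nu_1[|Q_{12}|^{2k}]\le C_k N^{-k}$ uniformly on $\cK$. We do this by induction on $k$, using the same smart-path interpolation in $s$ that yields \eqref{eq:main1}. Differentiating $\nu_s[Q_{12}^{2k}]$ in $s$ produces terms of the form $\beta^2 N\nu_s[Q_{12}^{2k}Q_{ab}Q_{cd}]$ and lower-order terms. The quadratic part is controlled by the same operator whose norm is governed by $\alpha<1$, together with the gap \eqref{eq:delta}; the remaining terms are handled by H\"older's inequality and the induction hypothesis.

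Next we identify the limiting covariance structure. We decompose $\sqrt N Q_{12}$ into three pieces: the replicon combination $Q_{12}-Q_{13}-Q_{24}+Q_{34}$, the single-replica part $Q_{13}-Q_{34}$ (and its symmetric counterparts), and the fully averaged part $Q_{34}$. These correspond to $A-2B+C$, $B-C$ and $C$ respectively, which explains the three terms in $\sigma^2$. Formula \eqref{eq:main3} at $s=1$ gives $N(A_1-2B_1+C_1)\to \alpha/(\beta^2(1-\alpha))$. For the other two, we run the standard cavity argument on the last spin, as in Talagrand's treatment for small $\beta$. We write $\nu_1[Q_{12}Q_{ab}]=\nu_1[(\varepsilon_N^1\varepsilon_N^2-q)Q_{ab}]$ up to symmetrization, pass to the cavity interpolation $\nu_t$, and expand to second order. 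The error terms are $O(N^{-3/2})$ by the moment bounds from the previous step. This produces a closed $3\times3$ linear system for $N(A_1,B_1,C_1)$ with coefficients involving $q$, $r$ and $\beta^2$. The eigenvalues of this system are $1-\alpha$ (replicon) and $1-\beta^2\kappa$ (longitudinal/anomalous). Solving it gives $NA_1\to\sigma^2$, written as the three displayed terms. For the solution to be valid we need $1-\beta^2\kappa>0$. This follows from $\alpha<1$, since $\kappa=1-4q+3r\le 1-2q+r$ because $r\le q$. Positivity of $\sigma^2$ then follows because $NA_1$ equals $N$ times a variance-type quantity: it can be written as $\sigma^2 = \lim N\nu_1[Q_{12}^2]$ and checked to be a sum of nonnegative components, with the replicon component strictly positive.

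For the higher moments, we show by the same cavity expansion that the joint moments of $(\sqrt N Q_{ab})_{a<b\le n}$ converge to those of a centered Gaussian family with the covariance identified above. This is Talagrand's Wick-rule induction. In $\nu_1[\prod_{l}\sqrt N Q_{a_lb_l}]$, we remove one factor via the cavity spin. Expanding $\nu_t$ to first order produces sums over pairings with one other factor, plus errors of size $O(N^{-1/2})$ controlled by the moment bounds. The resulting linear recursion is again inverted using the invertibility of the same $3\times3$ system, which holds because $1-\alpha\ge\delta_{\cK}$ and $1-\beta^2\kappa>0$. Gaussian moments determine the distribution, so the method of moments concludes the proof.

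The main obstacle is the first step: establishing the higher moment bounds in the full de Almeida--Thouless region. Talagrand's induction uses smallness of $\beta$ to absorb the quadratic terms. Here we instead need the spectral gap $1-\alpha$, applied to the combination of quadratic terms. We would first try to mimic the proof of \eqref{eq:main1}, replacing $Q_{12}^2$ with $Q_{12}^{2k}$ and treating the extra factor as a weight. If this fails, a fallback is available. Tightness from \eqref{eq:main1}, combined with uniform integrability derived from a weaker exponential concentration bound, suffices for the Wick recursion to close in the limit.
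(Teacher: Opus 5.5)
There is a genuine gap in your first step, and your method-of-moments argument depends on it. Differentiating $\nu_s[Q_{12}^{2k}]$ along the smart path gives no contraction. Since $\partial_s\xi_s(r)=\frac{\beta^2}{2}\bigl((r-q)^2-q^2\bigr)$, the derivative is $N$ times a signed combination of terms $\nu_s[Q_{12}^{2k}Q_{ab}^2]$, i.e.\ $N$ times moments of order $2k+2$. No term of order $2k$ with a coefficient like $\alpha$ appears, so the induction runs upward in $k$ and does not close. Closing an $s$-inequality of this kind is Lata{\l}a's exponential-moment argument, which is only known to work in a strictly smaller high-temperature region. In the paper, the $s$-path yields only the weak bound of Lemma~\ref{lem:preconcentration} and the tail bound \eqref{eq:tail}. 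The contraction by $1-\alpha$ that produces the $N^{-1}$ rate comes from the last-spin cavity expansion (Proposition~\ref{prop:cavity}), not from the $s$-derivative. Your fallback fails for a related reason. The bound \eqref{eq:tail} controls $\{|Q_{12}|\ge\varepsilon\}$ only for fixed $\varepsilon$. It says nothing about the range $KN^{-1/2}\le|Q_{12}|\le\varepsilon$, which is exactly where uniform integrability of $(\sqrt N Q_{12})^m$ could fail. Without convergence of moments, the method of moments gives nothing.

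The paper avoids higher moments entirely. It runs the cavity expansion for $\nu[X_{ab}f(X_{12})]$ with bounded $f\in C^2_b$, so every remainder is at most a constant depending on $f$ times $\sqrt N\sup_u\nu_u[|Q^-_{ab}Q^-_{cd}|]=O(N^{-1/2})$, using \eqref{eq:main1} alone. This gives the same $3\times3$ system you describe, hence the Stein identity \eqref{eq:clt-scalar-Stein}. Taking $f(x)=e^{\sqrt{-1}\,tx}$ turns it into an ODE for the characteristic function.

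If you prefer to keep the method of moments, you can get $\nu[Q_{12}^{2k}]\le C_kN^{-k}$ by induction from that same cavity system with $f(x)=x^{2k-1}$. The source term is $\theta\,(2k-1)N^{-1}\nu[Q_{12}^{2k-2}]$. Up to terms already controlled by the induction hypothesis, the remainders are $O\bigl(\nu[|Q_{12}|^{2k+1}]+N^{-1}\nu[|Q_{12}|^{2k-1}]\bigr)$. Then $\nu[|Q_{12}|^{2k+1}]\le\varepsilon\,\nu[Q_{12}^{2k}]+2^{2k+1}\nu[|Q_{12}|\ge\varepsilon]$ is absorbed via \eqref{eq:tail}, exactly as in Section~\ref{subsec:absorb}.

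Your covariance identification and positivity argument do not need Step~1 (use \eqref{eq:Nnu3} for the cubic remainder), and the positivity argument is sound. Indeed $A_1=D_1+2(B_1-C_1)+C_1$, where $D_1=\frac14\nu_1[(Q_{12}-Q_{13}-Q_{24}+Q_{34})^2]$, $C_1=\E\braket{Q_{12}}_1^2$, and $B_1-C_1$ is the expected Gibbs variance of $\braket{Q_{12}}_{\sigma^2}$ as a function of $\sigma^1$. All three are nonnegative, so $\sigma^2\ge\alpha/(\beta^2(1-\alpha))$. This is slightly stronger than the bound obtained algebraically in the paper.
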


Note that when $(\beta,h)$ approaches the AT line $\{(\beta,h)\mid~\alpha(\beta,h)=1\}$, this $\sigma^2$ diverges (see  Remark~\ref{remark: sigma diverge}).  Hence, we do not expect the same claim  to hold  outside the regime.

In the following subsection, we briefly review the history of the Sherrington--Kirkpatrick model and the relevant previous work.

\subsection{A brief history and previous research} 
Spin glasses were introduced to describe magnetic materials in which the interactions between microscopic spins are both disordered and frustrated. Experiments on dilute magnetic alloys in the 1970s showed  behavior very different from that of ordinary ferromagnets: a sharp freezing transition, very slow relaxation, and a strong dependence on the history of the sample. These results suggested that the low-temperature phase could not be described by a single equilibrium state of the usual kind, and they led to probabilistic models with quenched disorder.

One of the first such models was proposed by Edwards and Anderson \cite{EdwardsAnderson1975}. In their model, spins sit on the vertices of a finite-dimensional lattice and interact through random nearest-neighbor couplings. This formulation is physically natural, but its mathematical analysis is extremely difficult. The local interactions compete with each other because of frustration, and the quenched disorder removes the spatial regularity that classical arguments rely on. As a result, even basic questions about the low-temperature phase remain difficult in finite dimensions.

A simplified model was proposed by Sherrington and Kirkpatrick \cite{SherringtonKirkpatrick1975}. They replaced the short-range geometry of the Edwards--Anderson model by a mean-field interaction, in which every spin interacts weakly with every other spin. The resulting model, now called the Sherrington--Kirkpatrick (SK) model, keeps both disorder and frustration, but is easier to study by analytic methods. Using the replica method, Sherrington and Kirkpatrick derived a formula for the limiting free energy under the assumption of replica symmetry. The replica method is not mathematically rigorous, because it starts from integer moments of the partition function and then uses an analytic continuation in the number of replicas. More importantly, the replica-symmetric formula gave a negative entropy at low temperature, which is physically inconsistent. It was soon understood that the replica-symmetric ansatz could not describe the whole phase diagram.

A fundamental step was taken by de Almeida and Thouless \cite{deAlmeidaThouless1978}, who studied the local stability of the replica-symmetric solution. 
Their computation predicted that the replica-symmetric solution is  stable when
\begin{align}
    \label{eq: AT region}
\beta^2\mathbb E\operatorname{sech}^4\bigl(h+\beta\sqrt q\,Z\bigr)\leq 1,
\end{align}
and unstable when the reverse strict inequality holds. This boundary is now called the de Almeida--Thouless line, or AT line. Thus, the AT condition comes from the physical stability analysis of the replica-symmetric state.

The failure of replica symmetry below the AT line led Parisi to propose a much richer picture of the low-temperature phase (see \cite{Parisi1979,Parisi1980}). In this picture, the symmetry among replicas is broken in a hierarchical way, and the thermodynamic state is described by an order parameter that gives the distribution of overlaps between typical configurations. This led to the Parisi variational formula for the limiting free energy. The Parisi theory resolved the low-temperature problem of the replica-symmetric solution and gave a single framework that covers both the replica-symmetric and the replica-symmetry-breaking phases.

These ideas have been used far beyond condensed matter physics. In computer science, random constraint satisfaction problems such as random $K$-SAT and random graph coloring show threshold phenomena similar to replica symmetry breaking, and algorithms based on the cavity method, such as belief and survey propagation, are still among the best known methods for these problems (see \cite{MezardParisiZecchina2002,MezardMontanari2009}). In information theory and high-dimensional statistics, the free energy of suitable spin glass models characterizes the fundamental limits in error-correcting codes, compressed sensing, and inference in graphical models (see \cite{Nishimori2001,ZdeborovaKrzakala2016}).   In machine learning, the Hopfield model of associative memory is also a spin glass model in which the stored patterns play the role of quenched disorder (see \cite{Hopfield1982}), and spin glass methods have been used to study loss landscapes and training dynamics of neural networks. More generally, the notions of frustration, energy landscapes, and metastability, identified in spin glasses, are now standard tools in the study of complex systems, from protein folding to financial networks.

The rigorous mathematical study of the SK model has developed along  several different routes. An early landmark was the work of  Aizenman, Lebowitz, and Ruelle \cite{AizenmanLebowitzRuelle1987}, who proved Gaussian fluctuations of the free energy at high temperature and zero external field by using a cluster expansion. At zero external field, replica symmetry is now known to hold for all $\beta<1$, and the critical point $\beta=1$ separates the replica-symmetric and symmetry-breaking regimes. With an external field, the problem turned out to be more difficult. Talagrand  proved the replica-symmetric formula at sufficiently high temperature by the cavity method, and a simple interpolation argument due to Lata\l{}a gave another proof in the range $\beta \in [0,1/2)$ and $h\in \R$. A recent work of the authors extended Lata\l{}a's argument to a wider range including the region $\beta\leq 1$ and $h\in \R $ (see \cite{KusuokaNakajima1}).

In parallel, a different line of work addressed the limiting free energy at all temperatures.  Guerra \cite{Guerra}  introduced an interpolation argument that proved an appropriate upper bound, and then  Talagrand \cite{TalagrandParisi} proved the matching opposite bound.  Together, these results established the Parisi formula for the SK model. This turned the study of the phase diagram of the free energy into a rigorous variational problem. 

One question remained: whether the AT stability condition exactly characterizes the replica-symmetric phase of the SK model with a nonzero external field or not.  Several rigorous results proved replica symmetry in regions inside the predicted AT region, or replica symmetry breaking on the other side of the line, but reaching the AT boundary itself remained difficult and had been a major open problem in this field. Very recently,  Lopatto \cite{Lopatto} proved that replica symmetry holds throughout the AT region \eqref{eq: AT region} 
 using the variational characterization of the Parisi formula.  The present paper gives a quantitative counterpart of this picture  (see Theorem~\ref{thm:main} and the explanation after that).

 The fluctuations of the overlap have also been studied extensively. In zero external field, for $\beta<1$, Aizenman, Lebowitz and Ruelle proved in \cite{AizenmanLebowitzRuelle1987} a central limit theorem for the overlap by chaos expansion (see  also \cite{CometsNeveu} for a different proof via stochastic calculus). At the critical point $(\beta,h)=(1,0)$, Du and Huang very recently identified the critical $N^{-1/3}$ scale and determined the limiting quenched distribution of the rescaled overlap (see \cite{DuHuangCriticalOverlap}). For nonzero external field, Gaussian fluctuations of the centered overlaps on the $N^{-1/2}$ scale were established, in a sufficiently high-temperature region, independently by Talagrand and by Guerra and Toninelli (see \cite{TalagrandRSB,GuerraToninelliCLT}). So, our second main theorem extends this to a maximal regime where the claim holds.

\subsection{Outline of the proof: from coupled free energy to optimal overlap bound}
\label{subsec:proof-outline}

We briefly sketch the proof of the first claim in the first main result.
The second and third claims  are obtained from this using a standard interpolation argument (see Section~\ref{sec:conclusion}). 

Our proof is inspired by that of the Parisi formula in \cite{TalagrandParisi}. For $u \in \mathbb R$, define the coupled 
two-replica pressure by
\begin{equation}
 p^{(2)}_{N,s}(u )
  :=
 \frac1{2N}\mathbb{E}\log
 \sum_{\sigma^1,\sigma^2}
 \exp\left(
 H_{N,s}(\sigma^1)+H_{N,s}(\sigma^2)
 +\frac{u  N}{2}Q_{12}^2\right).
\label{eq:constrained-two-replica-pressure}
\end{equation}
The Guerra--Talagrand interpolation gives
\begin{equation}
p_{N,s}^{(2)}(u )
\le
\mathfrak P_s(u )+ o_\cK(1)
\label{eq:GT-constrained-bound-outline}
\end{equation}
uniformly in $(\beta,h)\in\mathcal K$, $s\in[0,1]$, and
$u\in[-1,1]$. Here $\mathfrak P_s(u )$ denotes the
corresponding one-step replica-symmetry-breaking formula. Using an argument similar to that of Lopatto's  work \cite{Lopatto},  one obtains the flatness condition that for $u>0$ small enough,
 \begin{equation}
\mathfrak P_s(u )
= \mathfrak P_s(0). %+ o(u ).
\label{eq:global-GT-gap-outline}
\end{equation}

Let \begin{equation*}
 D_N(s)\coloneqq \mathfrak P_s(0)-\phi_N(s).
\end{equation*} 
By computations using Gaussian integration by parts, we have for $u_0>0$ sufficiently small,  
\[
D_N'(s) =  \frac{\beta^2}{4} \nu_s [Q_{12}^2]  \leq \frac{\beta^2}{u_0} ({p_{N,s}^{(2)}(u_0)-\phi_N(s)}) \leq   \frac{\beta^2}{u_0} D_N(s) +  o_\cK(1),
\]
where we have used Jensen's inequality with $p_{N,s}^{(2)}(u_0)-\phi_N(s)= (2N)^{-1}\mathbb E\log \langle e^{u_0 N Q_{12}^2/2} \rangle_s$ for the first inequality, and \eqref{eq:GT-constrained-bound-outline} and \eqref{eq:global-GT-gap-outline} for the last inequality. Since $D_N(0) = 0$, by Gronwall's inequality, we have
$$D_N(s) = o_\cK(1),\quad  p_{N,s}^{(2)}(u_0)-\phi_N(s)= \frac{1}{2N}\mathbb E\log \langle e^{u_0 N Q_{12}^2/2} \rangle_s = o_\cK(1).$$ 

For a fixed $\delta>0$, we have 
\begin{align*}
        \nu_s\left(
|Q_{12}|\ge\delta
\right)&\le \mathbb P( \langle e^{u_0 N Q_{12}^2/2} \rangle_s\ge e^{\delta^2 u_0 N/4}) + \mathbb E \Big[\mathbf{1}\{\langle e^{u_0 N Q_{12}^2/2} \rangle_s<e^{\delta^2 u_0 N/4}\}\langle \mathbf{1}\{|Q_{12}|\ge\delta\}\rangle_s\Big].
\end{align*}
For the first term,  using a standard Gaussian Lipschitz concentration inequality for $\log \langle e^{u_0 N Q_{12}^2/2} \rangle_s$,  one gets 
\begin{align*}
\mathbb P( \langle e^{u_0 N Q_{12}^2/2} \rangle_s\ge e^{\delta^2 u_0 N/4})\leq C_{\mathcal K,\delta}
\exp(-c_{\mathcal K,\delta}N).
\end{align*}
For the  second term, by Markov's inequality, we have
\begin{align*}
    &\mathbb E \Big[\mathbf{1}\{\langle e^{u_0 N Q_{12}^2/2} \rangle_s<e^{\delta^2 u_0 N/4}\}\langle \mathbf{1}\{|Q_{12}|\ge\delta\}\rangle_s\Big]\\
    &\leq \mathbb E \Big[\mathbf{1}\{\langle e^{u_0 N Q_{12}^2/2} \rangle_s<e^{\delta^2 u_0 N/4}\}\langle e^{u_0 N Q_{12}^2/2}\rangle_s e^{-u_0 \delta^2 N/2}\Big]\\
&\le
C_{\mathcal K,\delta}
\exp(-c_{\mathcal K,\delta}N).
\end{align*}
Therefore, we obtain a weak concentration for the overlap: 
\begin{equation}
\begin{split}
    \nu_s\left(
|Q_{12}|\ge\delta
\right)&\leq 
2 C_{\mathcal K,\delta}
\exp(-c_{\mathcal K,\delta}N).
\label{eq:fixed-deviation-overlap-tail-outline}
\end{split}
\end{equation}

To obtain the $N^{-1}$ bound, we use
the cavity bootstrap developed by  Talagrand in his high-temperature analysis (see \cite[Chapter~1]{Talagrand}).
Estimating the influence of adding one spin, we reach
\begin{equation}
\nu_s[Q_{12}^2] \leq C_\cK 
 \left(
\nu_s|Q_{12}|^3+N^{-1}
\right).
\label{eq:cavity-remainder-outline}
\end{equation}
The weak concentration estimate \eqref{eq:fixed-deviation-overlap-tail-outline} yields that for any fixed
$\delta >0$,
\begin{align}
\nu_s|Q_{12}|^3
&\le
\delta\nu_s[Q_{12}^2]
+
8\nu_s\left(|Q_{12}|\ge\delta\right) \le
\delta \nu_s[Q_{12}^2]+
C_{\mathcal K,\delta}e^{-c_{\mathcal K,\delta}N}.
\label{eq:cubic-bootstrap-outline}
\end{align}
Consequently, we have
\[
\nu_s[Q_{12}^2] \leq \delta C_\cK \nu_s[Q_{12}^2] +  C_\cK N^{-1} + C_\cK C_{\cK,\delta}e^{-c_{\mathcal K,\delta}N}.
\]
Choosing $\delta=1/(2C_\cK)>0$, we obtain
\begin{equation}
\nu_s[Q_{12}^2]
\le
\frac{C'_{\mathcal K}}{N},
\label{eq:optimal-overlap-bound-outline}
\end{equation}
with some $C'_\cK$.
This is the sketch of the proof of the quantitative bound for overlaps.

\section*{Formalization in Lean 4}

The main results of this paper have been formalized in Lean 4. The formal artifact is available in the GitHub repository
\[
\texttt{njimaMath/research\_public/RSAT}.
\]

The file \texttt{Main.lean} provides the public interface between the mathematical statements in this paper and their formal counterparts.

The smart-path Hamiltonian \eqref{eq:path} is represented by \texttt{H\_N\_s}, and the theorem \texttt{H\_N\_s\_eq\_smartPath} verifies that this concrete Hamiltonian agrees with the abstract smart-path implementation used by the formal proof.

Theorem~\ref{thm:main} corresponds to the Lean theorem
\texttt{strictAT\_main}. Its conclusion is represented by the structure
\texttt{StrictATClaim}, whose three fields formalize respectively:
\begin{itemize}
    \item the uniform estimate $N\nu_s[Q_{12}^2]\le C_{\mathcal K}$;
    \item the signed $O(N^{-1})$ estimate for the replica-symmetric free energy correction;
    \item the asymptotic formula for the finite-volume replicon susceptibility.
\end{itemize}

Theorem~\ref{thm:overlap-clt-intro} corresponds to
\texttt{strictAT\_overlapCLT\_weak}. This theorem gives weak convergence of
$\sqrt{N}(R_{12}-q)$ to the centered Gaussian distribution with the variance stated in Theorem~\ref{thm:overlap-clt-intro}.

The formalization uses Lean 4 together with Mathlib. The repository contains the complete dependency structure, build instructions, and the source files for the cavity estimates, overlap concentration, Guerra--Talagrand argument, Gaussian calculations, smart-path interpolation, and the central limit theorem. In particular, the formal statements use a concrete countable product Gaussian probability space, and \texttt{Main.lean} supplies the bridge between this implementation and the notation used in the present paper.

\subsection{Organization of the present paper}

The rest of the paper is devoted to the proof of Theorem~\ref{thm:main} and Theorem~\ref{thm:overlap-clt-intro}.
In Section~\ref{sec:coercivity}, we establish coercivity estimates for the replica-symmetric solution.   
The argument in this part is inspired by Talagrand's approach in \cite{TalagrandParisi}, and involves several  explicit technical  calculations. 
Section~\ref{sec:concentration} gives estimates on overlap concentration. This part is similar to the argument in \cite{KusuokaNakajima1}. In Section~\ref{app:cavity-identities}, we prove  Proposition~\ref{prop:cavity}, which provides the overlap estimates used in Section~\ref{sec:concentration}. In Section~\ref{sec:conclusion} we prove the main theorem (Theorem~\ref{thm:main}) by applying the results in Sections~\ref{sec:coercivity}~and~\ref{sec:concentration}. In Section~\ref{sec:overlap-clt}, we prove the central limit theorem for the overlap (Theorem~\ref{thm:overlap-clt-intro}). 

In the appendices, we prove several technical results used in Section~\ref{sec:coercivity}. 
 Section~\ref{app:specialGT} is about the finite-volume Guerra--Talagrand upper bound, which is applied in Section~\ref{subsec:GT}.
In Section~\ref{sec:Price} we state and prove Price's theorem, which is also used in Section~\ref{subsec:GT}. 

We also note that we provide a formalization of the proof in Lean 4 \cite{brabra}.

We use \(C_{\mathcal K}\) and \(c_{\mathcal K}\) to denote, respectively, large and small positive constants depending only on the compact set \(\mathcal K\). Their values may vary from line to line.

\section{Replica-symmetric coercivity}\label{sec:coercivity}

\subsection{Estimates on the replica-symmetric path}\label{subsec:ATsign}

Fix $(\beta,h,s)\in\cK\times[0,1]$.
For $t\ge0$, let $\mathsf H_t$ be the heat semigroup, i.e. 
\[
 (\mathsf H_t\varphi)(x)\coloneqq\E\varphi(x+\sqrt t\,Z),
\]
where $Z$ is a standard normal random variable.
Then, it holds that
\begin{equation}\label{eq:genH}
\partial _t (\mathsf H_t\varphi)(x) = \frac{1}{2} \mathsf H_t (\partial ^2 \varphi)(x) .
\end{equation}
We introduce the tilted heat semigroup
\begin{equation}
 (\mathsf T_t\varphi)(x)
 \coloneqq e^{-t/2}
 \frac{\bigl(\mathsf H_t
   (\varphi \cdot \cosh )\bigr)(x)}{\cosh(x)}.
 \label{eq:tiltedsemigroup}
\end{equation}
Differentiation under the Gaussian integral shows that its generator is
$\frac12\partial_{xx}+\tanh(x)\partial_x$.  For each $u\in[0,1]$,
let $X_{s,u}$ denote a random variable whose law is specified by
\begin{equation}
 \E\varphi(X_{s,u})
 \coloneqq
 \begin{cases}
  \bigl(\mathsf H_{\beta^2(1-s)q+s\beta^2u}\varphi\bigr)(h),
      &0\le u\le q,\\[1mm]
  \bigl(\mathsf H_{\beta^2q}
      (\mathsf T_{s\beta^2(u-q)}\varphi)\bigr)(h),
      &q\le u\le1.
 \end{cases}
 \label{eq:localfieldlaw}
\end{equation}
In particular, it holds that
\begin{equation}\label{eq:Xsq}
X_{s,q}\stackrel{\mathrm d}=h+\beta\sqrt q\,Z,
\end{equation}
where ``$\stackrel{\mathrm d}=$" means the equality in law.

\begin{lemma}
\label{lem:uppercomparison}
If $s\beta^2(1-q)>1$ and $q\le u\le1$, then
\begin{equation}
 \beta^2\mathbb{E}\sech^4(X_{s,u})\le\alpha.
 \label{eq:uppercomparison}
\end{equation}
\end{lemma}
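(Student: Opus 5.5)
The approach is to write the left-hand side of \eqref{eq:uppercomparison} as a function of the tilted-semigroup time, and to compare it with its value at time $0$, which is exactly $\alpha$ by \eqref{eq:Xsq}. The hypothesis $s\beta^2(1-q)>1$ should enter only in the final comparison, and I have not yet pinned down how.

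\textbf{Reduction to a one-variable function.} Put $t\coloneqq s\beta^2(u-q)\in[0,T]$ with $T\coloneqq s\beta^2(1-q)>1$, and let $Y\coloneqq h+\beta\sqrt q\,Z$. By \eqref{eq:localfieldlaw},
\[
 \E\sech^4(X_{s,u})=\E\bigl[(\mathsf T_t\sech^4)(Y)\bigr].
\]
Unwinding \eqref{eq:tiltedsemigroup} with $\varphi=\sech^4$, so that $\varphi\cosh=\sech^3$, gives the explicit representation
\[
 g(t)\coloneqq \E\sech^4(X_{s,u})
 =e^{-t/2}\,\E\bigl[\sech(Y)\,\sech^3\bigl(Y+\sqrt t\,Z'\bigr)\bigr],
\]
where $Z'$ is an independent standard Gaussian. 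At $t=0$ this is $\E\sech^4(Y)=\alpha/\beta^2$. The statement is therefore equivalent to $g(t)\le g(0)$ for $t\in[0,T]$.

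\textbf{Derivative.} Using \eqref{eq:genH}, differentiate $g$. A direct computation gives $(\sech^3)''=\sech^3(9-12\sech^2)$, and together with the prefactor $e^{-t/2}$ this yields
\[
 g'(t)=e^{-t/2}\,\E\Bigl[\sech(Y)\,\mathsf H_t\bigl(\sech^3\,(4-6\sech^2)\bigr)(Y)\Bigr].
\]
Equivalently, $g'(t)=\E[(\mathsf T_t L\sech^4)(Y)]$ with $L\sech^4=\sech^4(4-6\sech^2)$.

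\textbf{Main obstacle: the comparison $g(t)\le g(0)$.} Because $L\sech^4$ changes sign, $g$ need not be monotone, and this is exactly the step I expect to be hardest.

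The first thing I would try is to show that $g$ is unimodal. The factor $e^{-t/2}$ eventually dominates, since $\sech^3\le1$ gives $g(t)\le e^{-t/2}\E\sech(Y)$. Meanwhile the heat flow pushes mass into the region where $\sech^2<2/3$, where the integrand of $g'$ is negative. Once $g'(t_0)\le0$, one would then show $g'$ stays nonpositive by checking that $\mathsf H_t$ preserves the relevant sign structure of $\sech^3(4-6\sech^2)$ weighted by $\sech(Y)$, for instance through a log-concavity argument.

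Granting unimodality, it then suffices to compare the two endpoints. I would handle the endpoint $t=T$ either by exploiting $T>1$ together with the crude bound $g(T)\le e^{-T/2}\E\sech(Y)$, or via Hölder's inequality in the form
\[
 \E\bigl[\sech(Y)\sech^3(Y')\bigr]\le(\E\sech^4 Y)^{1/4}(\E\sech^4 Y')^{3/4},
\]
where $Y'\coloneqq Y+\sqrt t\,Z'$. The aim is to get $g(T)\le g(0)$. Here the fixed-point relation \eqref{eq:q}, written as $1-q=\E\sech^2(Y)$, should make the quantity $\beta^2(1-q)$ in the hypothesis comparable to $\beta^2\E\sech^2(Y)$, and that is how I expect the hypothesis to be used.

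If this route fails, my fallback is to test the inequality at the level of the Parisi-type interpretation. There $u\mapsto\beta^2\E\sech^4(X_{s,u})$ plays the role of the second derivative of the one-step RSB functional, and the hypothesis $T>1$ would guarantee, via convexity in $u$, that the maximum over $[q,1]$ is attained at $u=q$.
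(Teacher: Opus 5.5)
There is a genuine gap: the step that carries all the content, namely controlling $g(t)$ for every $t\in[0,T]$, is never supplied, and the reduction you sketch cannot supply it.

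\textbf{The endpoint reduction fails.} The shape you describe has $g'$ possibly positive at first and then $g'\le0$ from some $t_0$ on. Such a function is single-peaked, so its maximum on $[0,T]$ can sit at an interior $t_0$, and comparing $g(0)$ with $g(T)$ says nothing about $g(t_0)$. Endpoint comparison suffices only if $g'\le0$ already from $t=0$, which is exactly the monotonicity you were trying to avoid. The endpoint estimates do not close either. For widely spread $Y$, $\E\sech^4Y/\E\sech Y\approx\frac{4}{3\pi}<e^{-1/2}$, so the crude bound $e^{-T/2}\E\sech Y\le\E\sech^4Y$ fails when $T$ is slightly above $1$. This regime does occur: for large $\beta$ with $\alpha$ close to $1$ one has $\beta^2(1-q)\approx\frac32\alpha>1$.

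\textbf{The diagnosis is also off.} You do not identify how the hypothesis enters, and the convexity-in-$u$ fallback is unsupported. In fact, under the hypothesis $g$ is nonincreasing on all of $[0,\infty)$. The pointwise sign change of $L\sech^4$ is the wrong object to study, because the sign only becomes visible after integrating against the law of $Y$.

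\textbf{What is missing} are two ideas.

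First, the hypothesis is used at the outset, and only to show $h<\sigma^2$, where $\sigma\coloneqq\beta\sqrt q$. Suppose $h\ge\sigma^2$. Then $1-q=\E\sech^2(h+\sigma Z)\le\E\sech^2(\sigma^2+\sigma Z)$. Let $W\coloneqq\sigma^2\varepsilon+\sigma Z$ with $\varepsilon$ an independent Rademacher sign, so that $\E[\varepsilon\mid W]=\tanh W$. Then $\E\sech^2(\sigma^2+\sigma Z)=\E\Var(\varepsilon\mid W)\le\E\bigl(\varepsilon-W/(1+\sigma^2)\bigr)^2=1/(1+\sigma^2)$. This gives $(1-q)(1+\beta^2q)\le1$, hence $\beta^2(1-q)\le1$, contradicting $s\beta^2(1-q)>1$.

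Second, use the divergence form of the tilted generator,
\[
\partial_t\mathsf T_t\varphi=\frac{1}{2\cosh^2x}\,\partial_x\bigl(\cosh^2x\,\partial_x\mathsf T_t\varphi\bigr),
\]
and integrate by parts against the $N(h,\sigma^2)$ density. This gives $g'(t)=\E\bigl[\bigl(\tanh Y+\frac{Y-h}{2\sigma^2}\bigr)\,\partial_x(\mathsf T_t\sech^4)(Y)\bigr]$. The function $\mathsf T_t\sech^4=e^{-t/2}\mathsf H_t(\sech^3)/\cosh$ is even and nonincreasing on $[0,\infty)$. Pairing $x$ with $-x$ therefore yields $g'(t)=\int_0^\infty\partial_x(\mathsf T_t\sech^4)(x)\,K(x)\,\dd x$. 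Here $K(x)$ is a positive multiple of $\tanh x+\frac{x}{2\sigma^2}-\frac{h}{2\sigma^2}\tanh(hx/\sigma^2)$, which is positive for $x>0$ precisely because $h/\sigma^2<1$. Hence $g'(t)\le0$ for all $t\ge0$ and $g(t)\le g(0)=\alpha/\beta^2$. No endpoint comparison is needed, and the constraint $t\le T$ plays no role.
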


\begin{proof}
We first show that 
\begin{equation}
 h<\sigma^2:=\beta^2    q .
 \label{eq:h-less-sigma}
\end{equation}
Indeed, suppose that $h\ge\sigma^2$. A direct calculation shows that 
\[
 1-q
 =
 \mathbb{E}\sech^2(h+\sigma Z)
 \le
 \mathbb{E}\sech^2(\sigma^2+\sigma Z).
\]
Let $\varepsilon$ be a symmetric Rademacher variable, independent of $Z$,
and put $Y=\sigma^2\varepsilon+\sigma Z$. Then $\varepsilon^2=1$ and 
$\E[\varepsilon\mid Y]=\tanh Y$ due to $\mathbb P(\varepsilon=\pm 1,\,Y\in {\mathrm d}y) = \frac{1}{2\sqrt{2\pi \sigma^2}}e^{-(y - (\pm \sigma^2) )^2/2\sigma^2} {\mathrm d}y$, and hence
\[
 \mathbb{E}\sech^2(\sigma^2+\sigma Z)
 =
 \E\operatorname{Var}(\varepsilon\mid Y)
 \le
\E\Big(\varepsilon-\frac{1}{1+\sigma^2}Y\Big)^2
 =
 \frac{1}{1+\sigma^2}.
\]
Therefore
\[
 (1-q)(1+\beta^2q)\le1.
\]
Reorganizing this with $q>0$ implies $\beta^2(1-q)\le1$. This contradicts the assumption 
$s\beta^2(1-q)>1$ because $s\le1$.
Thus, we obtain \eqref{eq:h-less-sigma}.

Fix $q\le u\le1$ and set
\[
 t\coloneqq s\beta^2(u-q),
 \qquad
 X\coloneqq h+\sigma Z.
\]
From \eqref{eq:localfieldlaw} it follows that
\[
 \mathbb{E}\sech^4(X_{s,u})
 =
 \E\bigl[(\mathsf T_t\sech^4)(X)\bigr].
\]
Hence, for \eqref{eq:uppercomparison} it suffices to prove
\[
 \E\bigl[(\mathsf T_t\sech^4)(X)\bigr]
 \le
 \mathbb{E}\sech^4(X).
\]

Now put
\[
 g_t\coloneqq\mathsf T_t \sech^4 \qquad \mbox{and}\quad
 \qquad
 M(t)\coloneqq\E g_t(X).
\]
Since
\[
 g_t(x)
 =
 e^{-t/2}\frac{\mathsf H_t(\sech^3)(x)}{\cosh x},
\]
$g_t$ is an even and nonincreasing function on $[0,\infty)$.
In particular, it holds that
\[
 g_t'(x)\le0,
 \qquad x>0.
\]
By \eqref{eq:genH} and \eqref{eq:tiltedsemigroup} we have
\begin{equation}\label{eq:partialT_t}
 \partial_t(\mathsf T_t\varphi)(x)
 =\frac{1}{2\cosh^2x}\partial_x\left(
   \cosh^2x\,\partial_x(\mathsf T_t\varphi)(x)
  \right).
\end{equation}
Let $\phi(x)=(2\pi)^{-1/2}e^{-x^2/2}$ be the standard normal density.
Since $X\sim N(h,\sigma^2)$, its density is
$\sigma^{-1}\phi((x-h)/\sigma)$.
From \eqref{eq:partialT_t} and integration by parts
in divergence form we have
\[
 \begin{aligned}
 M'(t)
 &=\frac12\int_{\mathbb R}
   \partial_x\bigl(\cosh^2x\,g_t'(x)\bigr)
   \frac{\phi((x-h)/\sigma)}{\sigma\cosh^2x}\,\dd x\\
 &=-\frac12\int_{\mathbb R}\cosh^2x\,g_t'(x)
   \partial_x\left(
     \frac{\phi((x-h)/\sigma)}{\sigma\cosh^2x}
   \right)\dd x\\
 &=\E\left[
   \left(\tanh X+\frac{X-h}{2\sigma^2}\right)g_t'(X)
 \right].
 \end{aligned}
\]
Pairing $x$ and $-x$
gives
\begin{equation}\label{eq:M'}
 M'(t)=\int_0^\infty g_t'(x)K(x)\,\dd x,
\end{equation}
where
\[
 \begin{aligned}
 K(x)
 &\coloneqq
 \frac1\sigma\phi\left(\frac{x-h}{\sigma}\right)
 \left(\tanh x+\frac{x-h}{2\sigma^2}\right)+
 \frac1\sigma\phi\left(\frac{-x-h}{\sigma}\right)
 \left(\tanh x+\frac{x+h}{2\sigma^2}\right).
 \end{aligned}
\]
Since
\[
 \frac{\phi((-x-h)/\sigma)}{\phi((x-h)/\sigma)}
 =e^{-2hx/\sigma^2},
\]
we obtain
\[
 \frac{\sigma K(x)}{
  \phi((x-h)/\sigma)+\phi((-x-h)/\sigma)}
 =
 \tanh x+\frac{x}{2\sigma^2}
 -\frac{h}{2\sigma^2}\tanh\left(\frac{hx}{\sigma^2}\right).
\]
Since $0<h/\sigma^2<1$ due to \eqref{eq:h-less-sigma},
\(
 \tanh\left(\frac{hx}{\sigma^2}\right)\le\tanh x .
\)
This inequality  implies that for $x>0$
\[
 \frac{\sigma K(x)}{
  \phi((x-h)/\sigma)+\phi((-x-h)/\sigma)}
 \ge
 \left(1-\frac{h}{2\sigma^2}\right)\tanh x
 +\frac{x}{2\sigma^2}
 >0.
\]
Thus $K(x)>0$, while $g_t'(x)\le0$, and therefore \eqref{eq:M'} yields
\[
 M'(t)\le0.
\]
In particular, it holds that $M(t)\le M(0)$. From this inequality we obtain
\[
 \beta^2\mathbb{E}\sech^4(X_{s,u})
 \le
 \beta^2\mathbb{E}\sech^4(h+\beta\sqrt q\,Z)
 =\alpha.
\]
\end{proof}
For $0\le u\le1$ and $x\in\mathbb R$, define
\begin{equation}
 \Psi(u,x)
 \coloneqq
 \E\log\cosh\left(
 x+\beta\sqrt{s(q-u)_+}\,Z
 \right)
 +\frac{s\beta^2}{2}\bigl(1-\max\{u,q\}\bigr),
 \qquad Z\sim N(0,1),
 \label{eq:Psi-explicit}
\end{equation}
where $(a)_+\coloneqq\max\{a,0\}$.  Equivalently, we can define $\Psi (u,x)$ by
\[
 \Psi(u,x)
 =
 \begin{cases}
  (\mathsf H_{s\beta^2(q-u)}\log\cosh)(x)
  +\dfrac{s\beta^2}{2}(1-q),
  &0\le u\le q,\\[2mm]
  \log\cosh(x)+\dfrac{s\beta^2}{2}(1-u),
  &q\le u\le1.
 \end{cases}
\]
Differentiating \eqref{eq:Psi-explicit} in $x$ gives
\begin{equation}
 \partial_x\Psi(u,x)
 =
 \mathbb{E}\tanh\left(
 x+\beta\sqrt{s(q-u)_+}\,Z
 \right)
 =
 \begin{cases}
  (\mathsf H_{s\beta^2(q-u)}\tanh)(x),
      &0\le u\le q,\\
  \tanh(x),&q\le u\le1.
 \end{cases}
 \label{eq:Psi-x}
\end{equation}

The replica-symmetric free energy for $H_{N,s}$ along the path is defined by
\begin{align}
 P_s^*& \coloneqq\log2+
 \E\Psi(0,h+\beta\sqrt{(1-s)q}\,Z)
 -\frac{s\beta^2}{2}\int_q^1u\,\dd u
 \label{eq:scalartrial}\\
 &= \log2+\E\log\cosh(h+\beta\sqrt q\,Z)
 +\frac{s\beta^2}{4}(1-q)^2,
 \label{eq:RSpathvalue}
\end{align}
where we have used the semigroup property and
$\beta^2(1-s)q+s\beta^2q=\beta^2q$.

\begin{proposition}
\label{prop:pathRS}
The function $g_s$ defined by
\begin{equation}
 g_s(u)\coloneqq\E\bigl[(\partial_x\Psi(u,X_{s,u}))^2\bigr]
 \label{eq:gs-def}
\end{equation}
satisfies
\begin{equation}
 g_s(u)-u
 \begin{cases}
 >0,&0\le u<q,\\
 =0,&u=q,\\
 <0,&q<u\le1.
 \end{cases}
 \label{eq:ATsign}
\end{equation}
Moreover, there are constants $c_{\cK},\varepsilon_{\cK}>0$ such that
\begin{equation}
 |g_s(u)-u|\ge c_{\cK}|u-q|
 \qquad\text{whenever }|u-q|\le\varepsilon_{\cK}.
 \label{eq:linearATsign}
\end{equation}
\end{proposition}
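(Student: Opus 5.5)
The plan is to compute $g_s$ explicitly on each of the two pieces $[0,q]$ and $[q,1]$, and to differentiate in $u$ using the generators of $\mathsf H_t$ and $\mathsf T_t$. In both pieces $g_s'(u)$ turns out to be $s\beta^2$ times an expectation of a $\sech^4$-type quantity. The anchor is $u=q$: there $\partial_x\Psi(q,x)=\tanh x$, and by \eqref{eq:Xsq} we get $g_s(q)=\E\tanh^2(h+\beta\sqrt q Z)=q$ by \eqref{eq:q}. So it suffices to show that $f(u)\coloneqq g_s(u)-u$ is strictly decreasing through $q$, with slope bounded away from $0$ near $q$.

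\emph{Step 1: the piece $0\le u\le q$.} Here $X_{s,u}=h+\beta\sqrt{(1-s)q+su}\,Z$ and $\partial_x\Psi(u,\cdot)=\mathsf H_{s\beta^2(q-u)}\tanh$. Realize $Y=h+\beta\sqrt q Z$ as the endpoint of a Brownian path. Then $\partial_x\Psi(u,X_{s,u})=\E[\tanh Y\mid\mathcal F_u]$, so $g_s(u)=\E\bigl[(\E[\tanh Y\mid \mathcal F_u])^2\bigr]$. The standard heat-equation computation, namely that $\frac{d}{dt}\E[(\mathsf H_{T-t}\varphi)^2(x+\sqrt t Z)]=\E[((\mathsf H_{T-t}\varphi)')^2(x+\sqrt tZ)]$, gives
\[
g_s'(u)=s\beta^2\,\E\bigl[(\mathsf H_{s\beta^2(q-u)}\sech^2)^2(X_{s,u})\bigr]\le s\beta^2\,\E\sech^4(h+\beta\sqrt qZ)=s\alpha\le\alpha .
\]
The inequality is conditional Jensen: this is the second moment of the martingale $\E[\sech^2 Y\mid\mathcal F_u]$. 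Hence $f'\le\alpha-1\le-\delta_\cK$ on $[0,q]$. Since $f(q)=0$, we get $f(u)\ge\delta_\cK(q-u)>0$ for $u<q$, which is the linear bound on the left.

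\emph{Step 2: the piece $q\le u\le 1$.} Here $g_s(u)=\E\tanh^2(X_{s,u})=\mathsf H_{\beta^2q}(\mathsf T_t\tanh^2)(h)$ with $t=s\beta^2(u-q)$. With the generator $L=\frac12\partial_{xx}+\tanh\partial_x$, a direct computation gives $L\tanh^2=\sech^4-2\tanh^2\sech^2+2\tanh^2\sech^2=\sech^4$. Therefore
\[
g_s'(u)=s\beta^2\,\E\sech^4(X_{s,u}).
\]
I then split into two cases.
\begin{itemize}
\item If $s\beta^2(1-q)>1$, Lemma~\ref{lem:uppercomparison} gives $g_s'(u)\le\alpha$. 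Hence $f'\le-\delta_\cK$ on $[q,1]$, and both the strict sign and the linear bound follow.
\item If $s\beta^2(1-q)\le1$, I use $\sech^4\le\sech^2$ to get $g_s'(u)\le s\beta^2(1-g_s(u))$. This yields
\[
f'(u)\le \bigl(s\beta^2(1-u)-1\bigr)-s\beta^2 f(u),
\]
where $s\beta^2(1-u)-1<0$ for $u>q$ (if $s=0$, then simply $f'=-1$). Gronwall's inequality from $f(q)=0$ then gives $f(u)<0$ for $u>q$.
\end{itemize}

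\emph{Step 3: uniform linear bound on the right of $q$.} The second case of Step 2 only gives strictness, not a uniform linear rate. For the rate I use the value at $u=q$: $g_s'(q^+)=s\beta^2\E\sech^4(X_{s,q})=s\alpha\le\alpha$ by \eqref{eq:Xsq}. Also, $g_s''$ on $[q,1]$ equals $s^2\beta^4\,\mathsf H_{\beta^2 q}(\mathsf T_t L\sech^4)(h)$, which is bounded by a constant depending only on $\cK$, since $L\sech^4$ is a bounded function and $\beta$ is bounded on $\cK$. Hence $g_s'(u)\le\alpha+C_\cK(u-q)\le1-\delta_\cK/2$ for $u-q\le\varepsilon_\cK\coloneqq\delta_\cK/(2C_\cK)$. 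This gives $|f(u)|\ge(\delta_\cK/2)(u-q)$ there, and combined with Step 1 it proves \eqref{eq:linearATsign}.

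The main obstacle is the right piece $u>q$ away from $q$, where one needs $\beta^2\E\sech^4(X_{s,u})$ to stay below $1$ even though $X_{s,u}$ is no longer the replica-symmetric field. This is exactly why Lemma~\ref{lem:uppercomparison} is needed in the low-temperature case. I expect the crude $\sech^4\le\sech^2$ Gronwall argument to cover the remaining case, but this step needs to be checked carefully.
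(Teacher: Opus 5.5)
Your proposal is correct and follows the paper's proof essentially step by step. It uses the same derivative formulas $g_s'(u)=s\beta^2\E[(\mathsf H_{s\beta^2(q-u)}\sech^2)^2(X_{s,u})]$ and $g_s'(u)=s\beta^2\E\sech^4(X_{s,u})$ on the two pieces, conditional Jensen on the left, Lemma~\ref{lem:uppercomparison} when $s\beta^2(1-q)>1$, and the $\sech^4\le\sech^2$ Gr\"onwall argument otherwise; your version, integrating from $f(q)=0$ against a strictly negative forcing term, is valid. The only minor difference is that you get the right-hand neighborhood $\varepsilon_\cK$ from an explicit uniform bound on $g_s''$ via $\|L\sech^4\|_\infty$, whereas the paper uses joint continuity of $g_s'$ and compactness; both work.
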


\begin{proof}
Since \eqref{eq:Psi-x} and \eqref{eq:Xsq} imply
\[
g_s(q) = \E\bigl[ \tanh ^2 (X_{s,q}) \bigr] =q ,
\]
\eqref{eq:linearATsign} holds for $u=q$.

We next consider the case $u<q$. 
Since \eqref{eq:Psi-x} gives
\[
g_s(u) = \E\left[ \bigl(\mathsf H_{s\beta^2(q-u)} (\tanh )\bigr)(X_{s,u}) ^2 \right] ,
\]
by noting that \eqref{eq:genH} and \eqref{eq:localfieldlaw} imply
\[
\partial _u \E\varphi(X_{s,u}) = \frac{s\beta ^2}{2} \bigl(\mathsf H_{\beta^2(1-s)q+s\beta^2u} (\partial ^2 \varphi ) \bigr) (h) = \frac{s\beta ^2}{2} \E\left[ (\partial ^2 \varphi ) (X_{s,u}) \right] ,
\]
we have
\begin{align*}
g_s'(u) &= -s\beta^2 \left. \partial _v \E\left[ \bigl(\mathsf H_v (\tanh )\bigr)(X_{s,u}) ^2 \right] \right| _{v=s\beta^2(q-u)} + \left. \partial _v \E\left[ \bigl(\mathsf H_{s\beta^2(q-u)} (\tanh )\bigr)(X_{s,v}) ^2 \right] \right| _{v=u} \\
&= - s\beta^2 \E\left[ \bigl(\mathsf H_{s\beta^2(q-u)} (\tanh )\bigr)(X_{s,u}) \bigl(\mathsf H_{s\beta^2(q-u)} ( \partial ^2 \tanh )\bigr)(X_{s,u}) \right] \\
&\quad + \frac{s\beta ^2}{2} \E\left[ \left. \left( \partial _x ^2 \bigl(\mathsf H_{s\beta^2(q-u)} (\tanh ) (x)\bigr) ^2 \right) \right| _{x=X_{s,u}} \right] \\
&= s\beta ^2 \E\left[ \bigl(\mathsf H_{s\beta^2(q-u)} (\partial \tanh )\bigr) (X_{s,u}) ^2 \right] .
\end{align*}
Thus, we obtain
\[
 g_s'(u)
 =s\beta^2\E\left[
   \left\{
    \bigl(\mathsf H_{s\beta^2(q-u)}
      ( \sech^2 )\bigr)(X_{s,u})
   \right\}^2
 \right].
\]
Jensen's inequality, the semigroup property and \eqref{eq:localfieldlaw} imply
\[
 g_s'(u)
 \le s\beta^2\E\left[
   \bigl(\mathsf H_{s\beta^2(q-u)}
     ( \sech^4 )\bigr)(X_{s,u})
 \right]
 =s\beta^2\mathbb{E}\sech^4(h+\beta\sqrt q\,Z)
 =s\alpha.
\]
Since $g_s(q)=q$, the integration of both sides implies
\[
q - g_s(u) \leq s \alpha (q-u) .
\]
Hence, it holds that
\begin{equation}
 g_s(u)-u\ge(1-s\alpha)(q-u)
 \qquad(0\le u<q).
 \label{eq:leftstrict}
\end{equation}
This yields \eqref{eq:ATsign} for $0\le u<q$.

We consider the case $u\ge q$.
In this case, \eqref{eq:Psi-x} gives $g_s(u)=\mathbb{E}\tanh^2(X_{s,u})$.
On the other hand, \eqref{eq:genH}, \eqref{eq:tiltedsemigroup} and \eqref{eq:localfieldlaw} imply
\begin{align*}
\partial _u \E\varphi(X_{s,u}) &= \partial _u \mathsf H_{\beta ^2 q} \left( e^{-\frac{s\beta ^2}{2}(u-q)} \frac{\mathsf H_{s\beta ^2 (u-q)}(\varphi \cdot \cosh )}{\cosh} \right) (h) \\
&= -\frac{s\beta ^2}{2}\E\varphi(X_{s,u}) + \frac{s\beta ^2}{2} \mathsf H_{\beta ^2 q} \left( e^{-\frac{s\beta ^2}{2}(u-q)} \frac{\mathsf H_{s\beta ^2 (u-q)}( \partial ^2 (\varphi \cdot \cosh) )}{\cosh} \right) (h) \\
&= \frac{s\beta ^2}{2} \E (\partial ^2 \varphi )(X_{s,u}) + s\beta ^2 \mathsf H_{\beta ^2 q} \left( e^{-\frac{s\beta ^2}{2}(u-q)} \frac{\mathsf H_{s\beta ^2 (u-q)}( ( \partial \varphi ) \cdot \sinh )}{\cosh} \right) (h).
\end{align*}
Hence, it holds that
\begin{align*}
g_s'(u) &= s\beta ^2 \E \sech ^4 (X_{s,u}) -2 s\beta ^2 \E \tanh ^2(X_{s,u}) \sech ^2 (X_{s,u})\\
&\quad + 2 s\beta ^2 \mathsf H_{\beta ^2 q} \left( e^{-\frac{s\beta ^2}{2}(u-q)} \frac{\mathsf H_{s\beta ^2 (u-q)}( \sech \cdot \tanh ^2 )}{\cosh} \right) (h).
\end{align*}
Thus, by recalling \eqref{eq:tiltedsemigroup} and \eqref{eq:localfieldlaw}, we have
\begin{equation}
 g_s'(u)=s\beta^2\mathbb{E}\sech^4(X_{s,u}),
 \qquad g_s'(q)=s\alpha<1.
 \label{eq:gs-upper-derivative}
\end{equation}

If $s\beta^2(1-q)>1$, Lemma~\ref{lem:uppercomparison} gives
$g_s'(u)\le s\alpha <1$, so $g_s(u)<u$ for $u>q$.
We consider the case $s\beta^2(1-q)\le1$.
Since
$\sech^4(x)\le\sech^2(x)=1-\tanh^2(x)$ for every $x\in\mathbb R$,
by \eqref{eq:gs-upper-derivative} we have
\begin{equation}\label{eq:gderivative}
 g_s'(u)\le s\beta^2(1-g_s(u)).
\end{equation}
Then, \eqref{eq:gderivative} implies that for $u>q$
\[
\partial _u (g_s(u) -u) \leq -s\beta ^2 (g_s(u)-u) + s\beta ^2 (1-u) -1 \leq -s\beta ^2 (g_s(u)-u) .
\]
Hence, by noting that \eqref{eq:gs-upper-derivative} implies $g_s(u)-u <0$ for $u>q$ sufficiently close to $q$ and by applying Gr\"onwall's inequality we have
\[
g_s(u) - u < 0 \qquad (u>q) .
\]
This yields \eqref{eq:ATsign} for $u>q$.

To prove \eqref{eq:linearATsign}, note first that \eqref{eq:leftstrict} and
\eqref{eq:delta} give
\[
 g_s(u)-u\ge\delta_{\cK}(q-u),
 \qquad 0\le u<q.
\]
Moreover, the expression for $g_s'(u)$ in \eqref{eq:gs-upper-derivative}
is jointly continuous in $(\beta,h,s,u)$ on the set $q\le u\le1$.
Since $g_s'(q)=s\alpha\le1-\delta_{\cK}$, compactness gives
$\varepsilon_{\cK}>0$ such that $\text{whenever }q\le u\le q+\varepsilon_{\cK}$,
\[
 g_s'(u)\le1-\frac{\delta_{\cK}}2.
\]
Thus, for $q<u\le q+\varepsilon_{\cK}$,
\[
 g_s(u)-u
 =\int_q^u\bigl(g_s'(v)-1\bigr)\,\dd v
 \le-\frac{\delta_{\cK}}2(u-q).
\]
Putting things together, we have 
\eqref{eq:linearATsign} with $c_{\cK}=\delta_{\cK}/2$ after decreasing
$\varepsilon_{\cK}$ if necessary.
\end{proof}

\subsection{Two-replica Guerra--Talagrand coercivity}\label{subsec:GT}

For $\lambda\in\mathbb R$, define a function $f_\lambda$ on ${\mathbb R}^2$ by
\begin{equation}
 f_\lambda(x_1,x_2)
 \coloneqq\log\left(\frac14\sum_{\varepsilon_1,\varepsilon_2=\pm1}
 \exp(\varepsilon_1x_1+\varepsilon_2x_2
       +\lambda\varepsilon_1\varepsilon_2)\right).
 \label{eq:GTterminal}
\end{equation}
For $(\beta,h,s)\in\cK\times[0,1]$, define a function $\xi_s$ on ${\mathbb R}$ by
\begin{equation}
 \xi_s(r)\coloneqq\beta^2(1-s)qr+\frac{s\beta^2}{2}r^2.
\label{eq:xi}
\end{equation}
Given $v\in[-1,1]$, put $\iota_v\coloneqq1$ for $v\ge0$ and
$\iota_v\coloneqq-1$ for $v<0$.  Let $\{ a_j \}_{j=0}^{M}$ be the increasing list of the distinct elements of $\{0,q,|v|,1\}$, i.e.
\[
\{ a_j \}_{j=0}^{M}= \{0,q,|v|,1\} \quad \mbox{and} \quad a_j< a_{j+1} \ (0\le j \le M-1).
\]
Note that $M=2$ if $|v| \in \{0,q,1\}$, and $M=3$ otherwise.
Define
\begin{equation}
 Q_j^v\coloneqq
 \begin{pmatrix}
 a_j&\iota_v(a_j\wedge |v|)\\
 \iota_v(a_j\wedge |v|)&a_j
 \end{pmatrix},
 \qquad 0\le j\le M.
\end{equation}
For $0\le j\le M$, set
\begin{equation}
 m_j\coloneqq
 \begin{cases}
 0,&a_j\le q,\\[1mm]
 \frac12,&q<a_j\le |v|,\\[1mm]
 1,&a_j>\max\{q,|v|\}.
 \end{cases}
\end{equation}
We call $(a_j)_{j=0}^M$ the breakpoints and $(m_j)_{j=0}^{M}$ the levels.
For each $v$, the  breakpoints  and levels are given as follows.

\begin{center}
\begin{tabular}{|c||c|c|}
\hline
& the  breakpoints  $(a_j)_{j=0}^M$ & the levels $(m_j)_{j=0}^{M}$ \\[1mm]
\hline
$|v|=0$ & $0,q,1$ & $0,0,1$\\[1mm]
\hline
$0<|v|<q$ & $0,|v|,q,1$ & $0,0,0,1$ \\[1mm]
\hline
$|v|=q$ & $0,q,1$ & $0,0,1$ \\[1mm]
\hline 
$q<|v|<1$ & $0,q,|v|,1$ & $0,0,\frac12,1$ \\[1mm]
\hline
$|v|=1$ & $0,q,1$ & $0,0,\frac12$\\[1mm]
\hline
\end{tabular}
\end{center}
% \footnote{\textcolor{red}{SN: The table heading says the levels are $(m_j)_{j=0}^M$, but the rows list only $(m_j)_{j=1}^M$. For example, when $|v|=0$, the full level vector is $(0,0,1)$, not $(0,1)$.}}
%
We also set
\[
m_{M+1}=1.
\]

%The levels $(m_j)_{j=1}^{M}$ are nondecreasing.  More explicitly, if
%$|v|=0$, then the breakpoints $(a_j)_{j=1}^M$ are $(0,q,1)$ and the levels $(m_j)_{j=1}^{M}$ are $(0,1)$;
%if $0<|v|<q$, then the breakpoints are $(0,|v|,q,1)$ and the levels are
%$(0,0,1)$; if $|v|=q$, then the breakpoints are $(0,q,1)$ and the levels
%are $(0,1)$; if $q<|v|<1$, then the breakpoints are
%(0,q,|v|,1)$ and the levels are $(0,\frac12,1)$; and if $|v|=1$, then
%the breakpoints are $(0,q,1)$ and the levels are $(0,\frac12)$.  We also set
%\[
%m_0=0,\qquad m_{M+1}=1.
%\]

Define a $2\times 2$-matrix $B_j$ by
\begin{equation}\label{eq:defB}
 B_j\coloneqq\beta^2(1-s)q
 \begin{pmatrix}1&1\\1&1\end{pmatrix}
 +s\beta^2Q_j^v
 =\xi_s'(Q_j^v),
 \qquad 0\le j\le M.
\end{equation}
Since $|v|$ is a breakpoint, every increment is of one of the forms
\begin{equation}
 Q_j^v-Q_{j-1}^v=
 \begin{cases}
 (a_j-a_{j-1})
 \begin{pmatrix}1&\iota_v\\\iota_v&1\end{pmatrix},&a_j\le |v|,\\[3mm]
 (a_j-a_{j-1})I_2,&a_{j}> |v|.
 \end{cases}
 \label{eq:GT-overlap-increments}
\end{equation}
Here, note that $a_{j-1} \geq |v|$ if $a_j >|v|$.
Both displayed matrices are positive semidefinite.  Consequently,
\begin{equation}\label{eq:finite-step-vector-increments}
 B_j-B_{j-1}=s\beta^2(Q_j^v-Q_{j-1}^v)\succeq0.
\end{equation}

For a positive semidefinite $2\times2$ matrix $C$, let $G_C$ be a
centered Gaussian vector with covariance $C$, and define
\begin{equation}
 \mathcal T_{m,C}F(x)\coloneqq
 \begin{cases}
 m^{-1}\log\E e^{mF(x+G_C)},&m>0,\\[2mm]
 \E F(x+G_C),&m=0.
 \end{cases}
 \label{eq:GTsemigroup}
\end{equation}
Define functions $U_{s,M}^{\lambda,v}$ by
\begin{equation}
 U_{s,M}^{\lambda,v}=f_\lambda,
 \qquad
 U_{s,j-1}^{\lambda,v}
 =\mathcal T_{m_j,B_j-B_{j-1}}U_{s,j}^{\lambda,v},
 \quad ( 1\le j \le M),
 \label{eq:U-stage-recursion}
\end{equation}
and for $x=(x_1,x_2) \in {\mathbb R}^2$ set
\begin{equation}
 U_s^{\lambda,v}(x)\coloneqq U_{s,0}^{\lambda,v}(x).
\end{equation}
At a breakpoint $a_j$, we use the notation
\begin{equation}\label{eq:U@a}
 U_s^{\lambda,v}(a_j,x)\coloneqq U_{s,j}^{\lambda,v}(x).
\end{equation}

We give another construction of $U_s^{\lambda,v}$ via random variables.
Choose independent centered Gaussian vectors
$G_1,\ldots,G_M$ with $\Var(G_j)=B_j-B_{j-1}$ and let
$\mathcal F_j=\sigma(G_1,\ldots,G_j)$, with $\mathcal F_0$ trivial.
For fixed $x\in\mathbb R^2$, put
\[
 X_M^\lambda=f_\lambda\left(x+\sum_{k=1}^MG_k\right) ,
\]
and for $0\le j \le M$ define $X_j$ recursively by
\begin{equation}
 X_{j-1}^\lambda \coloneqq
 \begin{cases}
 \displaystyle\frac1{m_j}\log\E\left[
 e^{m_jX_j^\lambda}\mid\mathcal F_{j-1}\right],&m_j>0,\\[3mm]
 \displaystyle\E\left[X_j^\lambda\mid\mathcal F_{j-1}\right],&m_j=0.
 \end{cases}
 \label{eq:U-conditional-recursion}
\end{equation}
Then, it holds that
\begin{equation}\label{eq:relationXU}
 X_j^\lambda=U_{s,j}^{\lambda,v}
 \left(x+\sum_{k=1}^jG_k\right),
 \qquad
 X_0^\lambda = U_s^{\lambda,v}(x).
\end{equation}
In particular, $U_s^{\lambda,v}(x)$ is deterministic.  This definition
also covers  the case $s=0$. 
In this case, every covariance increment vanishes. 

Next, put
\begin{equation}\label{eq:defd}
 d_j\coloneqq\frac{s\beta^2}{2}
 \sum_{k,\ell =1}^2(Q_{j,k\ell}^v)^2,
 \qquad 0\le j\le M,
\end{equation}
where $Q_{j,k\ell}^v$ is the $(k,\ell)$ entry of $Q_{j}^v$.
As a function of a breakpoint value $a$, this quantity is given by
\[
 d(a)=
 \begin{cases}
 2s\beta^2a^2,&0\le a\le |v|,\\[1mm]
 s\beta^2(a^2+|v|^2),&|v|\le a\le1.
 \end{cases}
\]
Indeed, it holds that $d_j = d(a_j)$.
Since $d(a)$ is continuous and nondecreasing, 
\begin{equation}\label{eq:finite-step-scalar-increments}
d_j-d_{j-1}\ge0.
\end{equation}
Define
\begin{equation}\label{eq:defLv}
 \mathcal L_s(v)\coloneqq
 \frac12\sum_{j=1}^Mm_j(d_j-d_{j-1}).
\end{equation}
This normalization gives the scalar Gaussian contribution at the
endpoint $t=1$ of the interpolation in Appendix~\ref{app:specialGT}, and is independent of $v$.  Indeed, if $|v|\le q$, then
\[
\sum_{j=1}^Mm_j(d_j-d_{j-1}) = d(1)-d(q)=s\beta^2(1-q^2).
\]
If $q<|v| < 1$,
\begin{align*}
&\sum_{j=1}^Mm_j(d_j-d_{j-1})
 =\frac12(d(|v|)-d(q))+d(1)-d(|v|) \\
& =s\beta^2(|v|^2-q^2)+s\beta^2(1-|v|^2)
 =s\beta^2(1-q^2).
\end{align*}
If $|v|=1$,
\[
 \sum_{j=1}^M m_j(d_j-d_{j-1}) = \frac12(d(1)-d(q))=s\beta^2(1-q^2).
\]
Therefore,
\begin{equation}
 \mathcal L_s(v)=\frac{s\beta^2}{2}(1-q^2),
 \label{eq:GTcorrection}
\end{equation}
In particular, this yields that $\mathcal L_s(v)$ is independent of $v$.

With $Y_0=h+\beta\sqrt{(1-s)q}\,Z$ where $Z$ is a standard normal random variable, set
\begin{equation}
 \mathfrak P_s(\lambda,v)
 \coloneqq2\log2+\E U_s^{\lambda,v}(Y_0,Y_0)
 -\lambda v-\mathcal L_s(v).
 \label{eq:GTfunctional}
\end{equation}
Recall the notation $P_s^*$ from \eqref{eq:RSpathvalue}. For  $v\in\mathcal R_N
\coloneqq\{-1,-1+2/N,\ldots,1\}$, define
\begin{equation}
 Z^{(2)}_{N,s}(v)
 \coloneqq
 \sum_{\substack{\sigma^1,\sigma^2\in\Sigma_N\\R_{12}=v}}
 e^{H_{N,s}(\sigma^1)+H_{N,s}(\sigma^2)}.
 \label{eq:constrainedZ}
\end{equation}

The following finite-volume Guerra--Talagrand upper bound follows from Lemma~\ref{lem:specialGT}, which is proved in Appendix~\ref{app:specialGT}. It is a special case of the general two-dimensional bound obtained by Talagrand and Chen in \cite{Talagrand,ChenGT}:
\begin{equation}
 \frac1N\E\log Z^{(2)}_{N,s}(v)
 \le \inf_{\lambda\in \mathbb R} \mathfrak P_s(\lambda,v).
 \label{eq:GTfunctionalbound}
\end{equation}
\iffalse
The following technical lemma is also postponed to Appendix~\ref{app: technical}.

\begin{lemma}
\label{lem:GTcontinuity}
The maps
\[
 (\beta,h,s,\lambda,v)\longmapsto
 \mathfrak P_s(\lambda,v)-2P_s^*,
 \qquad
 (\beta,h,s,\lambda,v)\longmapsto
 \partial_\lambda\mathfrak P_s(\lambda,v)
\]
are jointly continuous for
$(\beta,h,s,\lambda,v)\in\cK\times[0,1]\times\mathbb R \times[-1,1]$.
\end{lemma}
\fi
\begin{lemma}
\label{lem:U-finite-step-derivatives}
In the recursion \eqref{eq:U-conditional-recursion}, define the tilted
conditional expectation by
\[
\E_j^\lambda[H]
\coloneqq
\frac{
\E\left[
H e^{m_jX_j^\lambda}\mid\mathcal F_{j-1}
\right]
}{
\E\left[
e^{m_jX_j^\lambda}\mid\mathcal F_{j-1}
\right]
},
\qquad m_j>0,
\]
and let $\E_j^\lambda[H]=\E[H\mid\mathcal F_{j-1}]$ when $m_j=0$.
Then
\begin{align}
\partial_\lambda X_{j-1}^\lambda
&=
\E_j^\lambda
\left[
\partial_\lambda X_j^\lambda
\right],
\label{eq:U-first-derivative-recursion}\\
\partial_{\lambda\lambda}X_{j-1}^\lambda
&=
\E_j^\lambda
\left[
\partial_{\lambda\lambda}X_j^\lambda
\right]
+
m_j
\operatorname{Var}_j^\lambda
\left(
\partial_\lambda X_j^\lambda
\right).
\label{eq:U-second-derivative-recursion}
\end{align}
\end{lemma}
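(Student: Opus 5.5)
We prove both identities by differentiating the one-step recursion \eqref{eq:U-conditional-recursion} in $\lambda$. The Gaussian increments $G_1,\dots,G_M$ and the filtrations $\mathcal F_j$ do not depend on $\lambda$; only the terminal function $f_\lambda$ does. So the argument is a backward induction on $j$, from $j=M$ down to $j=1$. At each stage we carry the hypothesis that $X_j^\lambda$ is $C^2$ in $\lambda$ and that $X_j^\lambda$, $\partial_\lambda X_j^\lambda$, $\partial_{\lambda\lambda}X_j^\lambda$ are bounded uniformly in the Gaussian variables, locally uniformly in $\lambda$.

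For the base case, $f_\lambda$ is a log-sum-exp, so $\partial_\lambda f_\lambda$ is a Gibbs average of $\varepsilon_1\varepsilon_2$ and lies in $[-1,1]$. Likewise $\partial_{\lambda\lambda}f_\lambda$ is the corresponding variance and lies in $[0,1]$. Moreover $|f_\lambda(x)-f_0(x)|\le|\lambda|$, and $f_0$ grows at most linearly in $x$. Hence all the needed exponential moments against Gaussian vectors are finite, and differentiation under $\E[\cdot\mid\mathcal F_{j-1}]$ is justified by dominated convergence. Since the conditioning is on finitely many Gaussians, we can write each conditional expectation as an explicit integral over $G_j$ with $x+\sum_{k<j}G_k$ frozen.

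\textbf{Case $m_j=0$.} Here $X_{j-1}^\lambda=\E[X_j^\lambda\mid\mathcal F_{j-1}]$. Differentiating once and twice under the integral gives \eqref{eq:U-first-derivative-recursion} and \eqref{eq:U-second-derivative-recursion}, the latter with no variance term, consistent with $m_j=0$.

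\textbf{Case $m_j>0$.} Set $Z_j(\lambda)=\E[e^{m_jX_j^\lambda}\mid\mathcal F_{j-1}]$, so that $X_{j-1}^\lambda=m_j^{-1}\log Z_j$. Then
\[
\partial_\lambda Z_j=m_j\E[\partial_\lambda X_j^\lambda e^{m_jX_j^\lambda}\mid\mathcal F_{j-1}],
\]
and dividing by $m_jZ_j$ yields \eqref{eq:U-first-derivative-recursion}. Differentiating again,
\[
\partial_{\lambda\lambda}Z_j/Z_j=m_j\E_j^\lambda[\partial_{\lambda\lambda}X_j^\lambda]+m_j^2\E_j^\lambda[(\partial_\lambda X_j^\lambda)^2],
\]
and $m_j^{-1}\partial_{\lambda\lambda}\log Z_j=m_j^{-1}\bigl(\partial_{\lambda\lambda}Z_j/Z_j-(\partial_\lambda Z_j/Z_j)^2\bigr)$. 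Substituting these expressions produces exactly $\E_j^\lambda[\partial_{\lambda\lambda}X_j^\lambda]+m_j\operatorname{Var}_j^\lambda(\partial_\lambda X_j^\lambda)$.

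The two formulas also propagate the induction hypothesis. By \eqref{eq:U-first-derivative-recursion}, $|\partial_\lambda X_{j-1}^\lambda|\le 1$, since it is an average of quantities bounded by $1$. By \eqref{eq:U-second-derivative-recursion}, $\partial_{\lambda\lambda}X_{j-1}^\lambda$ is bounded by $1+m_j\le 2$, applied inductively. The growth bound on $X_{j-1}^\lambda$ is inherited as well.

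The only real obstacle is the justification of differentiation under the (tilted) conditional expectations, which requires uniform domination. The uniform boundedness of the $\lambda$-derivatives, together with the linear-growth bound on $X_j^\lambda$ in the Gaussian variables (which makes $e^{m_jX_j^\lambda}$ integrable against Gaussians), supplies this domination. Once it is in place, the identities are formal calculus.
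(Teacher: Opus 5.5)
Your proposal is correct and follows the same argument as the paper. The paper differentiates $X_{j-1}^\lambda=m_j^{-1}\log\E[e^{m_jX_j^\lambda}\mid\mathcal F_{j-1}]$ twice in $\lambda$ (or the plain conditional expectation when $m_j=0$) and reads off the tilted mean and variance; your backward induction additionally supplies the domination needed to differentiate under the conditional expectation, which the paper leaves implicit. One harmless slip: the inductive bound is $|\partial_{\lambda\lambda}X_{j-1}^\lambda|\le 1+\sum_{k\ge j}m_k$, not $1+m_j\le 2$, but only its finiteness is used for the domination.
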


\begin{proof}
For $m_j>0$, recalling that $$ X_{j-1}^\lambda= \displaystyle\frac1{m_j}\log\E\left[
 e^{m_jX_j^\lambda}\mid\mathcal F_{j-1}\right],$$
 we have 
\begin{align*}
 \partial_\lambda X_{j-1}^\lambda
 &=\frac{
\E\left[
\partial_\lambda X_j^\lambda e^{m_jX_j^\lambda}\mid\mathcal F_{j-1}
\right]
}{
\E\left[
e^{m_jX_j^\lambda}\mid\mathcal F_{j-1}
\right]}
= \E_j^\lambda[\partial_\lambda X_j^\lambda],\\
 \partial_{\lambda\lambda}X_{j-1}^\lambda
 &=\E_j^\lambda[\partial_{\lambda\lambda}X_j^\lambda]
   +m_j\operatorname{Var}_j^\lambda
       (\partial_\lambda X_j^\lambda).
\end{align*}
For $m_j=0$, direct differentiation of the ordinary conditional
expectation gives the same identities with no variance term.  This completes  the proof. 
\end{proof}

\begin{corollary}\label{cor: technical lemma for U and Psi}
It holds that
    \begin{equation}
\left|\partial_\lambda U_s^{\lambda,v}(x)\right|\le1,
\qquad
0\le
\partial_{\lambda\lambda}U_s^{\lambda,v}(x)
\le\frac52.
\label{eq:U-lambda-derivative-bounds}
\end{equation}
Consequently, uniformly in $(\beta,h,s,v,\lambda)$,
\begin{equation}
0\leq \partial_{\lambda\lambda}\mathfrak P_s(\lambda,v)\leq \frac52.
 \label{eq:GTsecondderivative-simple}
\end{equation}
\end{corollary}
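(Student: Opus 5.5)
Because the recursion \eqref{eq:U-conditional-recursion} expresses $U_s^{\lambda,v}(x)=X_0^\lambda$ through finitely many applications of the operators $\mathcal T_{m_j,\cdot}$, we propose proving the bounds by backward induction on $j$, from $M$ down to $0$, using Lemma~\ref{lem:U-finite-step-derivatives}. The base case concerns $X_M^\lambda=f_\lambda(y)$ with $y=x+\sum_k G_k$. Differentiating \eqref{eq:GTterminal} gives $\partial_\lambda f_\lambda=\langle\varepsilon_1\varepsilon_2\rangle_{\lambda}$, the mean of $\varepsilon_1\varepsilon_2$ under the probability measure on $\{\pm1\}^2$ with weights proportional to $\exp(\varepsilon_1y_1+\varepsilon_2y_2+\lambda\varepsilon_1\varepsilon_2)$. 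It also gives $\partial_{\lambda\lambda}f_\lambda=\Var(\varepsilon_1\varepsilon_2)=1-\langle\varepsilon_1\varepsilon_2\rangle^2\in[0,1]$. Hence the base case satisfies $|\partial_\lambda X_M^\lambda|\le1$ and $0\le\partial_{\lambda\lambda}X_M^\lambda\le1$.

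The first-derivative bound propagates directly. By \eqref{eq:U-first-derivative-recursion}, $\partial_\lambda X_{j-1}^\lambda$ is a (tilted) conditional average of $\partial_\lambda X_j^\lambda$, and averaging preserves the bound $|\cdot|\le1$. Nonnegativity of the second derivative also propagates. In \eqref{eq:U-second-derivative-recursion}, both terms are nonnegative: the first is an average of a nonnegative quantity, and the second is $m_j\ge0$ times a variance. Differentiation under the conditional expectations is justified because everything is smooth in $\lambda$. The integrands are bounded by $e^{m_j|\,\cdot\,|}$ times Gaussian moments, since $|f_\lambda(y)|\le|y_1|+|y_2|+|\lambda|$, and $\lambda$-derivatives are uniformly bounded by the above.

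The upper bound is where the constant $5/2$ enters, so care is needed here. Each level $j$ adds at most $m_j\Var_j^\lambda(\partial_\lambda X_j^\lambda)$. Since $|\partial_\lambda X_j^\lambda|\le1$, this variance is at most $1$. Summing along the recursion yields $\partial_{\lambda\lambda}X_0^\lambda\le 1+\sum_{j=1}^M m_j$. From the table of levels, $\sum_j m_j\le \tfrac12+1=\tfrac32$, attained in the case $q<|v|<1$ with levels $0,0,\tfrac12,1$. In all other cases the sum is at most $1$ (for $|v|=1$ it is $\tfrac12$). Thus $\partial_{\lambda\lambda}U_s^{\lambda,v}\le 1+\tfrac32=\tfrac52$. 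The main obstacle is only to organize this telescoping carefully: inductively, $\partial_{\lambda\lambda}X_{j-1}^\lambda\le 1+\sum_{k\ge j}m_k$, using the tilted expectation $\E_j^\lambda$ for the average term.

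Finally, by \eqref{eq:GTfunctional}, $\mathfrak P_s(\lambda,v)$ depends on $\lambda$ through $\E U_s^{\lambda,v}(Y_0,Y_0)-\lambda v$. The linear term does not contribute to the second derivative. Differentiating under $\E$ is allowed by the uniform bounds \eqref{eq:U-lambda-derivative-bounds} together with dominated convergence. This gives $0\le\partial_{\lambda\lambda}\mathfrak P_s\le\tfrac52$ uniformly in all parameters.
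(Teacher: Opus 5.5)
Your proposal is correct and follows the paper's proof essentially step for step. Both arguments compute $\partial_{\lambda\lambda}f_\lambda=1-\langle\varepsilon_1\varepsilon_2\rangle^2$ at the terminal level, propagate through the tilted-expectation recursion of Lemma~\ref{lem:U-finite-step-derivatives} to get $0\le\partial_{\lambda\lambda}U_s^{\lambda,v}\le 1+\sum_{j=1}^M m_j$, check $\sum_j m_j\le\frac32$ case by case from the table of levels, and then differentiate under $\E$ in \eqref{eq:GTfunctional}; your explicit justification of differentiating under the conditional expectations, via $|f_\lambda(y)|\le|y_1|+|y_2|+|\lambda|$, is a small detail the paper leaves implicit.
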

\begin{proof}
Let $\langle\cdot\rangle_{\lambda,x}$ denote expectation on
$\{\pm1\}^2$ with weights proportional to
\[
 \exp(\varepsilon_1x_1+\varepsilon_2x_2
             +\lambda\varepsilon_1\varepsilon_2) \quad ( (\varepsilon _1 , \varepsilon _2) \in \{ \pm 1 \} ^2 ).
\]
Since $(\varepsilon_1\varepsilon_2)^2=1$,
\[
 \partial_\lambda f_\lambda(x)
 =\langle\varepsilon_1\varepsilon_2\rangle_{\lambda,x},
 \qquad
 \partial_{\lambda\lambda}f_\lambda(x)
 =1-\langle\varepsilon_1\varepsilon_2\rangle_{\lambda,x}^2.
\]
Hence, we have $|\partial_\lambda f_\lambda|\le1$ and
$0\le\partial_{\lambda\lambda}f_\lambda\le1$.
By applying \eqref{eq:U-first-derivative-recursion} recursively, we have
\[
 \operatorname{Var}_j^\lambda(\partial_\lambda X_j^\lambda)
 \le \E_j^\lambda[(\partial_\lambda X_j^\lambda)^2]
 \le1.
\]
Similarly, applying \eqref{eq:U-second-derivative-recursion} recursively, we have 
\[
\partial_{\lambda\lambda}X_0^\lambda = \E_1^\lambda \cdots \E_M^\lambda
\left[
\partial_{\lambda\lambda}X_M^\lambda
\right]
+
\sum _{j=1}^M m_j
\E_1^\lambda \cdots \E_{j-1}^\lambda \left[ \operatorname{Var}_j^\lambda
\left(
\partial_\lambda X_j^\lambda
\right) \right] .
\]
This equality, \eqref{eq:relationXU} and the fact that $0\le\partial_{\lambda\lambda}f_\lambda\le1$ yield
\begin{equation}\label{eq:estd2U}
 0\le\partial_{\lambda\lambda}U_s^{\lambda,v}(x)
 \le1+\sum_{j=1}^Mm_j.
\end{equation}
If $|v|\le q$, the sum is at most $1$.  If
$q<|v|<1$, $(m_j)_{j=1}^M = ( 0,\frac12,1)$ and their sum is $3/2$.
If $|v|=1$, $(m_j)_{j=1}^M = ( 0,\frac12)$ and their sum is $1/2$.
Consequently, we have in every case,
\[
 1+\sum_{j=1}^Mm_j\le\frac52.
\]
Together with \eqref{eq:estd2U}, this proves
\eqref{eq:U-lambda-derivative-bounds}.

Moreover, differentiation under the expectation in
\eqref{eq:GTfunctional}
gives
\[
 \partial_{\lambda\lambda}\mathfrak P_s(\lambda,v)
 =\E\partial_{\lambda\lambda}
   U_s^{\lambda,v}(Y_0,Y_0).
\]
Hence \eqref{eq:U-lambda-derivative-bounds} proves
\eqref{eq:GTsecondderivative-simple}.
\end{proof}
\begin{proposition}
\label{prop:GTcoercivity}
Put
\[
 \Gamma_s(v)\coloneqq\inf_{\lambda\in\mathbb R}
 \mathfrak P_s(\lambda,v).
\]
There is a constant $c_{\cK}>0$ such that for $(\beta,h,s,v)\in\cK\times[0,1]\times[-1,1]$,
\begin{equation}
 \Gamma_s(v)
 \le 2P_s^*-c_{\cK}(v-q)^2.
 \label{eq:Gamma-coercivity}
\end{equation}
Moreover,
\begin{equation}
 \Gamma_s(q)=2P_s^*.
 \label{eq:Gamma-at-q}
\end{equation}
\end{proposition}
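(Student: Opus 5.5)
Two facts drive the plan. First, at $\lambda=0$ the functional collapses to the replica-symmetric value: $\mathfrak P_s(0,v)=2P_s^*$ for every $v$. Second, I will compute $\partial_\lambda\mathfrak P_s(0,v)$ and show that it equals $g_s(|v|)\iota_v - v$, up to a sign convention, where $g_s$ is the function of Proposition~\ref{prop:pathRS}. With these in hand, the uniform bound $0\le\partial_{\lambda\lambda}\mathfrak P_s\le 5/2$ from Corollary~\ref{cor: technical lemma for U and Psi} gives
\[
\Gamma_s(v)\le \inf_\lambda\Bigl(2P_s^*+\lambda\,\partial_\lambda\mathfrak P_s(0,v)+\tfrac54\lambda^2\Bigr)=2P_s^*-\tfrac15\bigl(\partial_\lambda\mathfrak P_s(0,v)\bigr)^2 .
\]
It then remains to bound $|\partial_\lambda\mathfrak P_s(0,v)|\ge c_\cK|v-q|$.

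\textbf{Step 1: $\lambda=0$.} At $\lambda=0$ we have $f_0(x)=\log\cosh x_1+\log\cosh x_2$. I will check that, with the levels listed in the table, the recursion \eqref{eq:U-stage-recursion} decouples into two one-replica Parisi recursions with levels $0$ below $q$ and $1$ above $q$. Each coordinate then reproduces $\Psi$ from \eqref{eq:Psi-explicit}. For the $m=1/2$ stage when $q<|v|$, the coupled Gaussian increment with covariance $(a_j-a_{j-1})\begin{pmatrix}1&\iota\\ \iota&1\end{pmatrix}$ combined with level $1/2$ gives the same result as independent increments with level $1$; I will verify this directly, since $\tfrac{1}{1/2}\log\E e^{\frac12(F(x_1+g)+F(x_2+\iota g))}$ must be compared with the sum of the one-replica terms. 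This is the delicate point, because the two are not obviously equal. If they fail to agree, I will instead show $\mathfrak P_s(0,v)\le 2P_s^*$ by Hölder's inequality, which is sufficient for \eqref{eq:Gamma-coercivity}. Combining this with \eqref{eq:scalartrial} and $\mathcal L_s=\frac{s\beta^2}{2}(1-q^2)$ (the identity $\frac{s\beta^2}{2}\int_q^1u\,du\cdot 2=\mathcal L_s$) gives $\mathfrak P_s(0,v)=2P_s^*$, possibly only as an upper bound.

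\textbf{Step 2: the first derivative.} By \eqref{eq:U-first-derivative-recursion}, $\partial_\lambda X_0=\E_1\cdots\E_M[\langle\varepsilon_1\varepsilon_2\rangle_{0,x}]$, and at $\lambda=0$ we have $\langle\varepsilon_1\varepsilon_2\rangle=\tanh x_1\tanh x_2$. The tilts turn the terminal law of each coordinate into the law $X_{s,u}$ of \eqref{eq:localfieldlaw}, with the tilted semigroup $\mathsf T$ appearing above $q$. The common Gaussian part up to level $|v|$ correlates the two coordinates. Consequently the product $\tanh\cdot\tanh$, conditioned at level $|v|$, factorizes into $\iota_v(\partial_x\Psi(|v|,X_{s,|v|}))^2$ in expectation. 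This gives $\partial_\lambda\mathfrak P_s(0,v)=\iota_v g_s(|v|)-v$.

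\textbf{Step 3: coercivity.} For $v\ge0$, Proposition~\ref{prop:pathRS} gives $|g_s(v)-v|\ge c|v-q|$ near $q$. Away from $q$, $|g_s(v)-v|$ is strictly positive by \eqref{eq:ATsign}, and joint continuity together with compactness of $\cK\times[0,1]\times[0,1]$ yields a positive lower bound, which is therefore $\ge c'(v-q)^2$. For $v<0$, we have $|{-g_s(|v|)}-v|=g_s(|v|)+|v|\ge g_s(0)+|v|$. Here $g_s(0)\ge(1-s\alpha)q\ge\delta_\cK q_\cK$ by \eqref{eq:leftstrict}, which controls $(v-q)^2\le4$. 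Finally, \eqref{eq:Gamma-at-q} follows by combining $\Gamma_s(q)\le2P_s^*$ with the lower bound $\mathfrak P_s(\lambda,q)\ge\mathfrak P_s(0,q)+\lambda\partial_\lambda\mathfrak P_s(0,q)=2P_s^*$, which uses convexity in $\lambda$ and $\partial_\lambda\mathfrak P_s(0,q)=g_s(q)-q=0$. This step needs the equality version of Step 1 at $v=q$; there no $m=1/2$ stage occurs, so the decoupling is clean.

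I expect the main obstacle to be the bookkeeping in Steps 1–2 through the $m=1/2$ stage, together with the continuity needed for the compactness argument.
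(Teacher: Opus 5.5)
Your treatment of $v\ge0$ and of \eqref{eq:Gamma-at-q} is sound and close to the paper's. The case $v<0$, however, has a genuine gap.

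\textbf{The derivative formula fails for $v<0$.} The claimed identity $\partial_\lambda\mathfrak P_s(0,v)=\iota_v g_s(|v|)-v$ is false there. The recursion starts from the diagonal point $(Y_0,Y_0)$, so the field $h$ and the $(1-s)q$-part enter both coordinates with the same sign. Only the increments up to level $|v|$ are anti-correlated, and $\tanh\cdot\tanh$ does not turn into $-(\partial_x\Psi)^2$. A concrete check is $s=0$. There every covariance increment vanishes, $U_0^{\lambda,v}=f_\lambda$, and $\partial_\lambda\mathfrak P_0(0,v)=\E\tanh^2(h+\beta\sqrt q\,Z)-v=q-v$, whereas your formula gives $-q-v$.

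\textbf{Sign slip in Step 3.} Even on its own terms, $-g_s(|v|)-v=|v|-g_s(|v|)$, not $g_s(|v|)+|v|$. This vanishes at $v=-q$ because $g_s(q)=q$, so your argument as written gives no gap there.

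\textbf{Step 1 for $v<-q$.} Your claim that $\mathfrak P_s(0,v)=2P_s^*$ for every $v$ is false for $v<-q$, $s>0$, where strict inequality holds. Your H\"older fallback does cover this.

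\textbf{What is actually true for $-q\le v<0$.} All levels up to $q$ vanish, and $\partial_\lambda\mathfrak P_s(0,v)=\E[\tanh Y_1(v)\tanh Y_2(v)]-v$. Here $Y_1(v),Y_2(v)$ have mean $h$, variance $\beta^2q$ and covariance $\beta^2((1-s)q+sv)$. Price's theorem and Cauchy--Schwarz bound the $v$-derivative of the first term by $s\alpha$. Hence $\partial_\lambda\mathfrak P_s(0,v)\ge(1-s\alpha)(q-v)$, which fits your scheme.

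\textbf{What is missing for $v<-q$.} The $m=\frac12$ stage acts on an anti-correlated pair $x\pm\sqrt t\,Z$. You have no usable formula or lower bound for the $\lambda$-derivative there. The H\"older bound $\mathfrak P_s(0,v)\le 2P_s^*$ alone becomes an equality as $s\to0$, so it gives no uniform gap. The paper closes this region in three steps:
\begin{itemize}
\item Cauchy--Schwarz is strict unless $Y_0=0$, a null event because $h>0$. This gives $\mathfrak P_s(0,v)<2P_s^*$ for $s>0$.
\item At $s=0$ it uses $\partial_\lambda\mathfrak P_0(0,v)=q-v>0$.
\item These pointwise gaps for $\inf_{|\lambda|\le1}\mathfrak P_s(\lambda,v)$ are made uniform on $\{|v-q|\ge\varepsilon_\cK\}$ by joint continuity and compactness.
\end{itemize}
Your plan needs an argument of this kind for $v<-q$.
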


\begin{proof}

We first evaluate the finite recursion at $\lambda=0$.
Suppose that
$-q\le v\le q$.
In this case, $a_{M-1}=q$, $m_M =1$ and $B_M - B_{M-1} = s\beta ^2 (1-q)I_2$.
Hence, with independent Gaussian random variables $G_1$ and $G_2$ with means $0$ and variances $s\beta ^2(1-q)$, the identities
\[
 f_0(x_1,x_2)=\log\cosh x_1+\log\cosh x_2,
 \qquad
 \left.\partial_\lambda f_\lambda(x_1,x_2)
 \right|_{\lambda=0}=\tanh x_1\tanh x_2 ,
\]
\eqref{eq:GTsemigroup}, \eqref{eq:U-stage-recursion} and \eqref{eq:U@a} give 
\begin{align*}
 U_s^{0,v}(q,x_1,x_2) & = \log \E \left[ \frac{1}{4} \sum _{\varepsilon _1, \varepsilon _2 \in {\pm 1}} \exp (\varepsilon _1 (x_1 + G_1) + \varepsilon _2 (x_2 + G_2))\right] \\
 &=s\beta^2(1-q)+\log\cosh x_1+\log\cosh x_2,
\end{align*}
and
\begin{align*}
 \left.\partial_\lambda U_s^{\lambda,v}(q,x_1,x_2) \right|_{\lambda=0}
 &= \frac{\E \left[ \frac{1}{4} \sum _{\varepsilon _1, \varepsilon _2 \in {\pm 1}} \varepsilon _1 \varepsilon _2 \exp (\varepsilon _1 (x_1 + G_1) + \varepsilon _2 (x_2 + G_2))\right]}{\E \left[ \frac{1}{4} \sum _{\varepsilon _1, \varepsilon _2 \in {\pm 1}} \exp (\varepsilon _1 (x_1 + G_1) + \varepsilon _2 (x_2 + G_2))\right]}\\
 &=\tanh x_1\tanh x_2.
\end{align*}
Hence, by these equalities, the fact that $m_j=0$ for $j<M$, and \eqref{eq:GT-overlap-increments}, we obtain
\begin{align}
U_s^{0,v}(x_1,x_2) &= s\beta^2(1-q) + \E \log\cosh (x_1 + \beta \sqrt{sq}Z_0 ) + \E \log\cosh (x_2 + \beta \sqrt{sq}Z_0 ), \\
\left.\partial_\lambda U_s^{\lambda,v}(x_1,x_2) \right|_{\lambda=0} &= \E \left[ \tanh (x_1 + \beta \sqrt{s|v|}Z_0 + \beta \sqrt{s(q -|v|)} Z_1) \right. \nonumber\\
&\qquad \qquad \left. \times \tanh (x_2 + \iota _v \beta \sqrt{s|v|}Z_0 + \beta \sqrt{s(q-|v|)}Z_2 )\right],
\end{align}
where $Z_0$, $Z_1$ and $Z_2$ are independent standard normal random variables. 
From these equalities it follows that
\begin{align*}
\E U_s^{0,v}(Y_0,Y_0) &= s\beta^2(1-q) + 2 \E \log\cosh (h + \beta \sqrt{q} Z ) \\
\left.\partial_\lambda \E U_s^{\lambda ,v}(Y_0,Y_0) \right|_{\lambda=0} &=\E \left[ \tanh (Y_1(v)) \tanh (Y_2(v) )\right]=:\tilde{g}_s(v),
\end{align*}
where $Y_0 = h +\beta \sqrt{(1-s)q}\,Z$, $Z$ is a standard normal random variable independent from other random variables, and $(Y_1(v),Y_2(v))$ is a  Gaussian pair such that both $Y_1(v)$ and $Y_2(v)$ have means $h$ and variances $\beta^2q$, and have the covariance
\[
 \Cov(Y_1(v),Y_2(v)) =\beta^2\bigl((1-s)q+sv\bigr).
\]
Thus, by applying \eqref{eq:RSpathvalue} and \eqref{eq:GTcorrection}  we have 
\begin{equation}
 \mathfrak P_s(0,v)=2P_s^*,
 \quad
 \partial_\lambda\mathfrak P_s(0,v)= \tilde{g}_s(v)-v , \qquad -q\le v\le q.
 \label{eq:GT-zero-signed-simple}
\end{equation}
%\[
% \tilde{g}_s(v)
% \coloneqq
% \E[\tanh Y_1(v)\tanh Y_2(v)].
%\]
Since Price's theorem (Theorem~\ref{thm:Price}) implies
\begin{align*}
\tilde{g}_s'(v) &= s\beta ^2 \E[\sech ^2 Y_1(v)\sech ^2 Y_2(v)],
\end{align*}
the Cauchy--Schwarz inequality yields
\[
 \tilde{g}_s'(v) \le s\beta ^2 \E[\sech ^4 (h+\beta\sqrt{q}Z)]\le s\alpha,
\]
where $\alpha$ is defined in \eqref{eq:alpha}.
Since $\Cov(Y_1(q),Y_2(q)) = \beta ^2 q = \operatorname{Var} (Y_1(q))$, it holds that $\tilde{g}_s (q) = \E[\tanh ^2 (Y_1(q))] =q$.
Hence, it follows that
\begin{equation}
\tilde{g}_s(v) -v
 \ge(1-s\alpha)(q-v)>0,
 \qquad -q\le v<q.
 \label{eq:GT-signed-positive-simple}
\end{equation}

In  the case $0\le v\le q$, put $t=s\beta^2(q-v)$, let
$X$ be a Gaussian random variable with mean $h$ and variance $\beta^2((1-s)q+sv)$ which is independent of other random variables.  
The variables
\[
 X+\sqrt t\,Z_1,
 \qquad X+\sqrt t\,Z_2,
\]
have the joint law of $(Y_1(v),Y_2(v))$, and are independent under the probability measure conditioned on $X$.
Hence, it follows that
\begin{equation}\label{eq:gv(0<v<q)}
 \tilde{g}_s(v) =\E\left[
 \bigl(\mathsf H_t(\tanh)(X)\bigr)^2
 \right]=g_s(v), \qquad 0\le v \le q
\end{equation}
in view of \eqref{eq:localfieldlaw}, \eqref{eq:Psi-x}, and \eqref{eq:gs-def}.

Now we consider  the case $q<v\le1$. Let $t \coloneqq s\beta^2(v-q)$.
Similarly to  the case $-q\le v\le q$, it holds that for $q< v< 1$
\begin{align*}
 U_s^{0,v}(v,x_1,x_2)
 &=s\beta^2(1-v)+\log\cosh x_1+\log\cosh x_2,\\
 \left.\partial_\lambda U_s^{\lambda,v}(v,x_1,x_2)
 \right|_{\lambda=0}
 & =\tanh x_1\tanh x_2.
\end{align*}
Since $a_{M-1} = v$, $a_{M-2}=q$, $m_{M-1}= \frac{1}{2}$ and 
\[
B_{M-1} - B_{M-2} = s\beta ^2 (v-q)\left( \begin{array}{cc} 1&1\\ 1&1 \end{array}\right) = t \left( \begin{array}{cc} 1&1\\ 1&1 \end{array}\right),
\]
we have% \footnote{\textcolor{red}{The $q<v$ Guerra--Talagrand recursion is missing a factor $2$. Since $m_{M-1}=1/2$, $\mathcal T_{1/2,C}F=2\log\E e^{F/2}$. Thus the logarithmic term below must be $2\log\E\sqrt{\cosh(x_1+\sqrt t\,Z_0)\cosh(x_2+\sqrt t\,Z_0)}$. Without this factor, setting $x_1=x_2=x$ does not yield the subsequently claimed value $2\log\cosh x+s\beta^2(1-q)$; with the factor restored, the diagonal value and the $\lambda$-derivative calculation are consistent.}}
\begin{align*}
 U_s^{0,v}(q,x_1 ,x_2 )
 &=s\beta^2(1-v)
 + 2 \log\E \sqrt{ \cosh(x_1 +\sqrt t\,Z_0) \cosh(x_2+\sqrt t\,Z_0 )} ,
\end{align*}
and 
\begin{align*}
 & \left.\partial_\lambda U_s^{\lambda,v}(q,x_1,x_2) \right|_{\lambda=0} \\
 &= \frac{\E \left[ \left.\partial_\lambda U_s^{\lambda,v}(v,x_1 +\sqrt t\,Z_0 ,x_2 +\sqrt t\,Z_0)
 \right|_{\lambda=0}\exp \left( \frac{1}{2} U_s^{0,v}(v,x_1 +\sqrt t\,Z_0 ,x_2 +\sqrt t\,Z_0 ) \right) \right] }{\E \exp \left( \frac{1}{2} U_s^{0,v}(v,x_1 +\sqrt t\,Z_0 ,x_2 +\sqrt t\,Z_0 ) \right) } \\
 &= \frac{\E \left[ \tanh (x_1 +\sqrt t\,Z_0) \tanh (x_2 +\sqrt t\,Z_0)
 \sqrt{\cosh (x_1 +\sqrt t\,Z_0) \cosh (x_2 +\sqrt t\,Z_0)} \right] }{\E \left[ \sqrt{\cosh (x_1 +\sqrt t\,Z_0) \cosh (x_2 +\sqrt t\,Z_0)} \right] } .
\end{align*}
Note that these equalities also hold  for $v=1$.  
Hence, by letting $x_1=x_2$, we obtain
\begin{align}
U_s^{0,v}(q,x_1 ,x_1) &= 2\log\cosh x_1 +s\beta^2(1-q) , \label{eq:GT-positive-diagonal-value} \\
\left.\partial_\lambda U_s^{\lambda,v}(q,x_1,x_1) \right|_{\lambda=0} &= \frac{\E[\tanh^2(x_1 +\sqrt t\,Z_0 )
                \cosh(x_1 +\sqrt t\,Z_0 )]}
         {\E\cosh(x_1 +\sqrt t\,Z_0 )}
 =\bigl(\mathsf T_t\tanh^2\bigr)(x_1) , \label{eq:GT-positive-diagonal-derivative}
\end{align}
where we applied the fact that $\E\cosh(x+\sqrt t\,Z)=e^{t/2}\cosh x$.
Since $a_1 = q$, $m_1 = 0$ and $B_1 - B_0 = s\beta ^2 q \left( \begin{array}{cc} 1&1\\ 1&1 \end{array}\right)$, from \eqref{eq:GT-positive-diagonal-value} and \eqref{eq:GT-positive-diagonal-derivative} it follows that
\begin{align*}
U_s^{0,v}(x_1 ,x_1) &= 2 \E \log\cosh (x_1 + \beta \sqrt{sq} Z_0) +s\beta^2(1-q) , \\
\left.\partial_\lambda U_s^{\lambda,v}(x_1,x_1) \right|_{\lambda=0} &= \E \bigl(\mathsf T_t\tanh^2\bigr)(x_1 + \beta \sqrt{sq} Z_0) .
\end{align*}
Since these equalities yield
\begin{align*}
 \E U_s^{0,v}(Y_0,Y_0) & = 2\E\log\cosh(h+\beta\sqrt q\,Z_0)+s\beta^2(1-q) , \\
 \E\left. \partial_\lambda U_s^{\lambda,v}(Y_0,Y_0) \right|_{\lambda=0}&= \bigl(\mathsf H_{\beta^2q} \mathsf T_{s\beta^2(v-q)}\tanh^2\bigr)(h)
 =g_s(v),
\end{align*}
where \eqref{eq:Psi-x}, \eqref{eq:localfieldlaw}, and \eqref{eq:gs-def} are applied in the last equality.
Thus, together with \eqref{eq:GT-zero-signed-simple} and \eqref{eq:gv(0<v<q)}, we obtain
\begin{equation}
 \mathfrak P_s(0,v)=2P_s^*,
 \qquad
 \partial_\lambda\mathfrak P_s(0,v)=g_s(v)-v,
 \qquad 0\le v\le1.
 \label{eq:GT-zero-positive-simple}
\end{equation}

We also need a strict gap in  the case $v<-q$.  Suppose first that $s>0$ and let $t=s\beta^2(|v|-q)>0$.
Again, similarly to  the case $-q\le v\le q$, it holds that for $v< -q$
\[
 U_s^{0,v}(|v|,x_1,x_2)
 =s\beta^2(1-|v|)+\log\cosh x_1+\log\cosh x_2.
\]
Hence, since $|v|>q$, an argument similar to the one  above gives  
\[
 U_s^{0,v}(q,x_1,x_2)
 =
 s\beta^2(1-|v|)
 +2\log\E\sqrt{
   \cosh(x_1+\sqrt t\,Z)\cosh(x_2-\sqrt t\,Z)}.
\]
From this equality, the Cauchy--Schwarz inequality and the fact that $\E\cosh(x+\sqrt t\,Z)=e^{t/2}\cosh x$, it follows that
\begin{align*}
 U_s^{0,v}(q,x_1,x_2)
 &\le
 s\beta^2(1-|v|)+t
 +\log\cosh x_1+\log\cosh x_2\\
 &=\log\cosh x_1+\log\cosh x_2 + s\beta^2(1-q).
\end{align*}
%where $\Psi$ is defied in \eqref{eq:Psi-explicit}.
%The formulas remain valid at $v=-1$, where the stage above $r$ has zero covariance.
Since $t>0$, equality holds only when $x_1=-x_2$.
From the fact that $a_1=q$ and $m_1=0$ we have
\[
U_s^{0,v}(x_1,x_2) \le \E \log\cosh (x_1 +\beta\sqrt{sq}\,Z_0) + \E \log\cosh (x_2-\beta\sqrt{sq}\,Z_0) + s\beta^2(1-q).
\]
Since 
\[
\Pbb \left[ Y_0+\beta\sqrt{sq}\,Z_0 = - (Y_0-\beta\sqrt{sq}\,Z_0 ) \right] =\Pbb(Y_0=0) = 0,
\]
where $Y_0 = h +\beta \sqrt{(1-s)q}Z$ and $h>0$, we have the strict inequality:
\[
\E U_s^{0,v}(Y_0,Y_0) < 2 \E \log\cosh (h +\beta\sqrt{q}\,Z) + s\beta^2(1-q) .
\]
This yields
\begin{equation}
 \mathfrak P_s(0,v)<2P_s^*,
 \qquad v<-q,\quad s>0.
 \label{eq:GT-negative-strict-simple}
\end{equation}
When $s=0$, a direct calculation gives
\begin{equation}
 \mathfrak P_0(0,v)=2P_0^*,
 \quad
 \partial_\lambda\mathfrak P_0(0,v)=q-v, \qquad v<-q.
 \label{eq:GT-szero-simple}
\end{equation}

We now prove the desired bound by applying the results above.
Choose
$\varepsilon_{\cK}<q_{\cK}/2$ so that
\eqref{eq:linearATsign} holds when
$|v-q|\le\varepsilon_{\cK}$.  Proposition~\ref{prop:pathRS} gives
$a_{\cK}\in (0,1)$ such that % \footnote{\textcolor{red}{SN: This local quadratic estimate is needed. The sign of $g_s(v)-v$ alone gives only a pointwise gap and does not provide a uniform quadratic gap as $v\to q$.}}
\begin{equation}
 (g_s(v)-v)(v-q)
 \le-a_{\cK}(v-q)^2,
 \qquad |v-q|\le\varepsilon_{\cK}.
 \label{eq:GT-local-slope-simple}
\end{equation}
Since $v\geq 0$ for $|v-q|\le\varepsilon_{\cK}$ with sufficiently small $\varepsilon_{\cK} >0$, \eqref{eq:GTsecondderivative-simple} and \eqref{eq:GT-zero-positive-simple} imply
\[
 \mathfrak P_s(\lambda,v)
 \le
 2P_s^*+\lambda(g_s(v)-v)
 +\frac{5}4 \lambda^2.
\]
Taking $\lambda=2 a_{\cK}(v-q)/5 \in [-1,1]$ and applying \eqref{eq:GT-local-slope-simple} to this inequality yields 
\begin{equation}
 \Gamma_s(v)
 \le
 2P_s^*
 -\frac{a_{\cK}^2}{5}(v-q)^2,
 \qquad |v-q|\le\varepsilon_{\cK}.
 \label{eq:GT-local-coercivity-simple}
\end{equation}

It remains to consider overlaps bounded away from $q$.  We claim that
\begin{equation}
\inf_{\lambda\in [-1,1]}
 \mathfrak P_s(\lambda,v)<2P_s^*
 \qquad\text{for }v\ne q.
 \label{eq:GT-pointwise-gap-simple}
\end{equation}
In  the case $0\le v<q$ and $q<v\le1$, this follows from  \eqref{eq:ATsign} and \eqref{eq:GT-zero-positive-simple}  by moving
$\lambda\in (0,1]$ slightly in the direction opposite to the derivative.
For the case $-q\le v<0$, use \eqref{eq:GT-zero-signed-simple} and \eqref{eq:GT-signed-positive-simple}.
For  the case $v<-q$ and $s>0$, apply \eqref{eq:GT-negative-strict-simple}.
For  the case $v<-q$ and $s=0$, \eqref{eq:GT-szero-simple} gives the claim by taking a sufficiently small negative $\lambda\in [-1,0)$.

% Consider the compact set
% \[
%  \mathcal D_{\cK}
%  =
%  \left\{
%  (\beta,h,s,v):
%  (\beta,h)\in\cK,\ 0\le s\le1,\ -1\le v\le1,\
%  |v-q|\ge\varepsilon_{\cK}
%  \right\}.
% \]
% At every point $\theta\in\mathcal D_{\cK}$,
% \eqref{eq:GT-pointwise-gap-simple} provides
% $\lambda_\theta\in\mathbb R$ such that
% \[
%  \mathfrak P_s(\lambda_\theta,v)<2P_s^*.
% \]
% In view of Lemma~\ref{lem:GTcontinuity}, this inequality persists with a
% positive margin on a neighborhood of $\theta$.
Due to the compactness of $\cK$ and the continuity of $\mathfrak P_s(\lambda,v)$, there exists $\delta _{\cK}>0$ such that
\[
 \Gamma_s(v)\le \inf_{\lambda\in\mathbb [-1,1]}
 \mathfrak P_s(\lambda,v)\le2P_s^*-\delta _{\cK},
 \qquad |v-q|\ge\varepsilon_{\cK}.
\]
Since $|v-q|\le2$,
\begin{equation}
 \Gamma_s(v)
 \le
 2P_s^*-\frac{\delta _{\cK}}4(v-q)^2,
 \qquad |v-q|\ge\varepsilon_{\cK}.
 \label{eq:GT-far-coercivity-simple}
\end{equation}
Combining \eqref{eq:GT-local-coercivity-simple} and
\eqref{eq:GT-far-coercivity-simple} proves
\eqref{eq:Gamma-coercivity}, with
\[
 c_{\cK}
 =
 \min\left\{
 \frac{a_{\cK}^2}{5},
 \frac{\delta _{\cK}}4
 \right\}>0.
\]

Finally, at $v=q$, \eqref{eq:GT-zero-positive-simple} gives
\[
 \mathfrak P_s(0,q)=2P_s^*,
 \qquad
 \partial_\lambda\mathfrak P_s(0,q)=g_s(q)-q=0.
\]
Since \eqref{eq:GTsecondderivative-simple} implies that $\lambda\mapsto\mathfrak P_s(\lambda,q)$ is convex, $\lambda=0$ is
a global minimizer.  Hence
\[
 \Gamma_s(q)=2P_s^*,
\]
which proves \eqref{eq:Gamma-at-q}.
\end{proof}
Together with \eqref{eq:GTfunctionalbound}, the proposition above yields the following. 
\begin{corollary}
        For every $N$ and $v\in\mathcal R_N$,
\begin{equation}
 \frac1N\E\log Z^{(2)}_{N,s}(v)
 \le 2P_s^*-c_{\cK}(v-q)^2.
 \label{eq:GTcoercivity}
\end{equation}
\end{corollary}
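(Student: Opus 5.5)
This corollary is a direct chaining of two facts already available, so the plan is simply to compose them pointwise in $v$. Fix $N$, $(\beta,h,s)\in\cK\times[0,1]$ and $v\in\mathcal R_N\subset[-1,1]$. First, the finite-volume Guerra--Talagrand bound \eqref{eq:GTfunctionalbound}, which the paper takes from Lemma~\ref{lem:specialGT} in Appendix~\ref{app:specialGT}, gives
\[
 \frac1N\E\log Z^{(2)}_{N,s}(v)\le\inf_{\lambda\in\mathbb R}\mathfrak P_s(\lambda,v)=\Gamma_s(v),
\]
where the equality is just the definition of $\Gamma_s$ in Proposition~\ref{prop:GTcoercivity}. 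Second, since $v\in[-1,1]$, the coercivity estimate \eqref{eq:Gamma-coercivity} of Proposition~\ref{prop:GTcoercivity} applies and yields $\Gamma_s(v)\le 2P_s^*-c_{\cK}(v-q)^2$. Chaining the two inequalities gives \eqref{eq:GTcoercivity}.

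It remains to check that the constant does not depend on $N$. The constant $c_{\cK}$ in Proposition~\ref{prop:GTcoercivity} depends only on $\cK$: it is uniform in $(\beta,h,s,v)\in\cK\times[0,1]\times[-1,1]$ and does not involve $N$. Likewise, \eqref{eq:GTfunctionalbound} holds for every $N$ and every admissible $v$. Hence the conclusion holds for all $N$ simultaneously, with the same $c_{\cK}$.

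There is no genuine obstacle here beyond what is already established. The only points to verify are, first, that the grid $\mathcal R_N$ lies inside the range $[-1,1]$ where Proposition~\ref{prop:GTcoercivity} is valid, and second, that the infimum over $\lambda\in\mathbb R$ in \eqref{eq:GTfunctionalbound} is the same object as $\Gamma_s$. Both hold by definition. All of the substantive work, namely the Guerra--Talagrand interpolation and the AT-sign analysis of $g_s$, has been carried out earlier.
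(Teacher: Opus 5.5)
Your proposal is correct and matches the paper's argument: the corollary is \eqref{eq:GTfunctionalbound} (Lemma~\ref{lem:specialGT}) chained with the coercivity bound \eqref{eq:Gamma-coercivity} of Proposition~\ref{prop:GTcoercivity}, whose constant $c_{\cK}$ does not depend on $N$ or $v$. One point both you and the paper leave implicit is that the centered part of the smart-path Hamiltonian $H_{N,s}$ has covariance $N\xi_s(R(\sigma,\tau))$, so Lemma~\ref{lem:specialGT} applies to $Z^{(2)}_{N,s}(v)$ as defined in \eqref{eq:constrainedZ}.
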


\section{Preliminary overlap concentration}\label{sec:concentration}

\subsection{Gaussian calculus}\label{subsec:gaussian}

We record Gaussian integration by parts, which is used below.  Let
$G\coloneqq(G_1,\ldots,G_m)$ be centered Gaussian with covariance
matrix $C$.
If $F$ is continuously differentiable with bounded first derivatives,
then Gaussian integration by parts (see \cite[Vol. 1, Sec. A.4]{Talagrand}) gives
\begin{equation}
 \E[G_iF(G)]=\sum_{j=1}^m C_{ij}\E[\partial_jF(G)].
 \label{eq:GIBP}
\end{equation}

We also use the following standard consequence of Gaussian concentration inequality. 
Let \(G\) be a standard Gaussian vector, and let $\{ F_v\} _{v\in I}$ be a finite set of $L$-Lipschitz functions on the underlying vector space.
Then, it holds that
\begin{equation}
 \mathbb{E}\max_{v\in I}\{F_v (G) -\E F_v (G)\}
 \le L\sqrt{2\log|I|}.
 \label{eq:gaussianmaxgeneral}
\end{equation}
Indeed, the Gaussian concentration inequality (see, for example,
\cite{Ledoux}) gives, for every \(t>0\)
and every \(v\in I\),
\[
 \mathbb{E}\exp\bigl(t(F_v(G) -\E F_v (G))\bigr)
 \le \exp\left(\frac{t^2L^2}{2}\right).
\]
This inequality and Jensen's inequality imply
\[ \begin{aligned}
\mathbb{E}\max_{v\in I}\bigl(F_v(G)-\E F_v(G)\bigr) &\le \frac1t\log \E\exp\left( t\max_{v\in I}\bigl(F_v(G)-\E F_v(G)\bigr) \right)\\ &\le \frac1t\log \sum_{v\in I} \E\exp\left(t\bigl(F_v(G)-\E F_v(G)\bigr)\right)\\
&\le \frac{\log|I|}{t}+\frac{tL^2}{2}, \end{aligned} \] 
where the first inequality follows from Jensen's inequality. 
For $|I|\ge2$ and $L>0$, optimize the upper bound by choosing
$t=\sqrt{2\log|I|}/L$.  The cases $|I|=1$ and $L=0$ are trivial.
Therefore, we obtain \eqref{eq:gaussianmaxgeneral}.
\subsection{Finite-volume Gronwall closure}\label{subsec:gronwall}

For $\rho\ge0$, define the coupled pressure $p^{(2)}_{N,s}(\rho)$ and its normalized excess $F_{N,s}(\rho)$ by
\begin{align}
 p^{(2)}_{N,s}(\rho)
 &\coloneqq
 \frac1{2N}\E\log
 \sum_{\sigma^1,\sigma^2}
 \exp\left(
 H_{N,s}(\sigma^1)+H_{N,s}(\sigma^2)
 +\frac{\rho N}{2}Q_{12}^2\right),
 \label{eq:quadraticpressure}\\
 F_{N,s}(\rho)
 &\coloneqq
 p^{(2)}_{N,s}(\rho)-\phi_N(s)
 =
 \frac1{2N}\E\log
 \braket{e^{\rho NQ_{12}^2/2}}_s ,
 \label{eq:normalizedcoupling}
\end{align}
respectively.
Fix
\begin{equation}
 0<\rho_0<2c_{\cK},
 \label{eq:rho0-choice}
\end{equation}
where $c_{\cK}$ is the coercivity constant in
Proposition~\ref{prop:GTcoercivity}.

\begin{lemma}
\label{lem:sublinearpressure}
There is $C_\cK>0$ such that for any $s\in [0,1]$ and $N\in \mathbb N$, 
\begin{equation}
 p^{(2)}_{N,s}(\rho_0)\le P_s^*+C_{\cK} 
 \sqrt{\frac{\log(N+1)}N}.
 \label{eq:pressureenvelope}
\end{equation}

\end{lemma}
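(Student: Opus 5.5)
We will decompose the coupled partition function according to the value of the overlap and bound each piece with the coercivity estimate \eqref{eq:GTcoercivity}. Since $R_{12}$ takes values in $\mathcal R_N$, which has $N+1$ elements,
\[
\sum_{\sigma^1,\sigma^2}\exp\Bigl(H_{N,s}(\sigma^1)+H_{N,s}(\sigma^2)+\tfrac{\rho_0N}{2}Q_{12}^2\Bigr)
=\sum_{v\in\mathcal R_N} e^{\rho_0N(v-q)^2/2}\,Z^{(2)}_{N,s}(v).
\]
Bounding the sum by $(N+1)$ times its maximal term gives
\[
2N\,p^{(2)}_{N,s}(\rho_0)\le \log(N+1)+\E\max_{v\in\mathcal R_N}\Bigl(\log Z^{(2)}_{N,s}(v)+\tfrac{\rho_0N}{2}(v-q)^2\Bigr).
\]
The remaining task is to exchange the expectation and the maximum at a cost of $O(\sqrt{N\log N})$.

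For the exchange we apply \eqref{eq:gaussianmaxgeneral} to the family $F_v\coloneqq\log Z^{(2)}_{N,s}(v)$, $v\in\mathcal R_N$, viewed as functions of the standard Gaussian vector $(g_{ij},z_i)$.

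\textbf{Lipschitz constant.} The gradient of $F_v$ is a Gibbs average of the gradient of $H_{N,s}(\sigma^1)+H_{N,s}(\sigma^2)$. In the $g$-coordinates this gradient is $\frac{\beta\sqrt s}{\sqrt{2N}}(\sigma^1_i\sigma^1_j+\sigma^2_i\sigma^2_j)_{ij}$, whose Euclidean norm is at most $2\beta\sqrt{s}\sqrt{N^2/(2N)}\le \sqrt2\beta\sqrt N$. In the $z$-coordinates it is $\beta\sqrt{(1-s)q}\,(\sigma^1_i+\sigma^2_i)_i$, whose norm is at most $2\beta\sqrt N$. Averaging preserves these bounds by convexity of the norm, so each $F_v$ is $L$-Lipschitz with $L\le C_{\cK}\sqrt N$, using that $\beta$ is bounded on $\cK$.

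Then \eqref{eq:gaussianmaxgeneral}, applied to $F_v+\frac{\rho_0N}{2}(v-q)^2$ (the added term is deterministic), gives
\[
\E\max_v\Bigl(F_v+\tfrac{\rho_0N}{2}(v-q)^2\Bigr)\le \max_v\Bigl(\E F_v+\tfrac{\rho_0N}{2}(v-q)^2\Bigr)+C_{\cK}\sqrt N\sqrt{2\log(N+1)}.
\]
By \eqref{eq:GTcoercivity}, $\E F_v\le N\bigl(2P_s^*-c_{\cK}(v-q)^2\bigr)$. The choice \eqref{eq:rho0-choice} gives $\rho_0/2<c_{\cK}$, so the quadratic terms combine with nonpositive coefficient and the maximum over $v$ is at most $2NP_s^*$. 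Dividing by $2N$ yields
\[
p^{(2)}_{N,s}(\rho_0)\le P_s^*+\frac{\log(N+1)}{2N}+C_{\cK}\sqrt{\frac{\log(N+1)}N},
\]
and the middle term is absorbed into the last one.

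The step that needs care is the Lipschitz bound: it must be uniform in $s$ and in $(\beta,h)\in\cK$, and it must scale exactly like $\sqrt N$ so that the final error is of order $\sqrt{\log N/N}$. The combinatorial decomposition and the coercivity input are routine once \eqref{eq:GTcoercivity} is available.
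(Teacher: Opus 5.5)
Your proposal is correct and follows essentially the same route as the paper: decompose by the overlap value $v\in\mathcal R_N$, pay $\log(N+1)$ for the sum, control the fluctuation $\E\max_v\bigl(\log Z^{(2)}_{N,s}(v)-\E\log Z^{(2)}_{N,s}(v)\bigr)$ through \eqref{eq:gaussianmaxgeneral} with Lipschitz constant $O_{\cK}(\sqrt N)$, and then close with \eqref{eq:GTcoercivity} and $\rho_0<2c_{\cK}$. The only difference is cosmetic: you bound the Euclidean norm of the full gradient directly, while the paper bounds each coordinate derivative and sums the squares, which gives the same $O_{\cK}(N)$ squared Lipschitz constant.
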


\begin{proof}
Let $L_v\coloneqq\log Z^{(2)}_{N,s}(v)$.  Each $L_v$ is a Lipschitz function
of the Gaussian disorder with squared Lipschitz constant at most
$O_{\cK}(1)N$.  Indeed,
\[
 \left|\partial_{g_{ij}}L_v\right|
 \le\frac{\sqrt2\beta\sqrt s}{\sqrt N},\qquad
 \left|\partial_{z_i}L_v\right|
 \le2\beta\sqrt{(1-s)q},
\]
and summing up the squares over $i,j$ and $i$ gives the claim.  Hence, by \eqref{eq:gaussianmaxgeneral}, 
\begin{equation}
 \mathbb{E}\max_{v\in\mathcal R_N}(L_v-\E L_v)
 \le C_{\cK} \sqrt{N\log(N+1)},
 \label{eq:gaussianmax}
\end{equation}
where we recall $\mathcal R_N \coloneqq \{ -1,-1+2/N,\ldots,1\}$.
Thus, by using \eqref{eq:GTcoercivity}, \eqref{eq:rho0-choice}, and \eqref{eq:gaussianmax}, we have
\begin{align*}
 2Np^{(2)}_{N,s}(\rho_0)
 &=
 \E\log\sum_{v\in\mathcal R_N}
 \exp\left(\E L_v + (L_v - \E L_v ) +\frac{\rho_0N}{2}(v-q)^2\right)\\
 &\le
 \log(N+1)+
 \max_v\left\{\E L_v+\frac{\rho_0N}{2}(v-q)^2\right\}
 +C_{\cK}\sqrt{N\log(N+1)}\\
 &\le2NP_s^*+C_{\cK} 
 \sqrt{{N\log(N+1)}}.
\end{align*}
This proves \eqref{eq:pressureenvelope}.
\end{proof}

Put
\begin{equation}
 D_N(s)\coloneqq P_s^*-\phi_N(s).
 \label{eq:DN}
\end{equation}
\begin{lemma}
\label{lem:preconcentration}
There is $C_\cK>0$ such that for $s\in [0,1]$
\begin{equation}
 D_N(s)\le C_{\cK} 
 \sqrt{\frac{\log(N+1)}N},
 \qquad
 A_s=\nu_s[Q_{12}^2]\le C_{\cK} 
 \sqrt{\frac{\log(N+1)}N}.
 \label{eq:preconcentration}
\end{equation}
\end{lemma}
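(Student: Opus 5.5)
The plan is to follow the Gronwall scheme from the outline in Section~\ref{subsec:proof-outline}. We replace the $o_\cK(1)$ there by the explicit rate of Lemma~\ref{lem:sublinearpressure}, and the role of $\mathfrak P_s(0)$ is now played by $P_s^*$.

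\textbf{Step 1: the derivative of $D_N$.} I will compute $D_N'(s)$ by Gaussian integration by parts \eqref{eq:GIBP} along the path \eqref{eq:path}. Differentiating $\phi_N(s)$ in $s$, the $g_{ij}$-term contributes $\frac{\beta^2}{4}(1-\nu_s[R_{12}^2])$, up to the diagonal terms, which are handled exactly because $\sigma_i^2=1$. The $z_i$-term contributes $-\frac{\beta^2q}{2}(1-\nu_s[R_{12}])$. Completing the square gives
\[
\phi_N'(s)=\frac{\beta^2}{4}(1-q)^2-\frac{\beta^2}{4}\nu_s[Q_{12}^2].
\]
Since $\partial_sP_s^*=\frac{\beta^2}{4}(1-q)^2$ by \eqref{eq:RSpathvalue}, this yields
\[
D_N'(s)=\frac{\beta^2}{4}A_s .
\]
The one thing to check is the normalization of the double sum $\sum_{i,j}$ with $1/\sqrt{2N}$, whose covariance is $\frac{s\beta^2N}{2}R_{12}^2$. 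If there is an $O(1/N)$ discrepancy there, it is absorbed into the error term. At $s=0$ the system decouples and $\phi_N(0)=\log2+\E\log\cosh(h+\beta\sqrt q Z)=P_0^*$, so $D_N(0)=0$.

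\textbf{Step 2: bounding $A_s$ by the excess $F_{N,s}(\rho_0)$.} By Jensen's inequality, first for the Gibbs average and then for $\E$,
\[
\frac{\rho_0}{4}A_s=\frac{1}{2N}\E\bigl\langle \tfrac{\rho_0N}{2}Q_{12}^2\bigr\rangle_s\le \frac1{2N}\E\log\braket{e^{\rho_0NQ_{12}^2/2}}_s=F_{N,s}(\rho_0).
\]
Lemma~\ref{lem:sublinearpressure} then gives
\[
F_{N,s}(\rho_0)=p^{(2)}_{N,s}(\rho_0)-\phi_N(s)\le D_N(s)+C_\cK\sqrt{\log(N+1)/N}.
\]
Combining with Step 1,
\[
D_N'(s)\le \frac{\beta^2}{\rho_0}D_N(s)+C_\cK\sqrt{\log(N+1)/N}.
\]
Here $\beta$ is bounded on $\cK$ and $\rho_0$ is fixed in \eqref{eq:rho0-choice}.

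\textbf{Step 3: Gronwall and the bound on $A_s$.} Gronwall's inequality on $[0,1]$ with $D_N(0)=0$ gives $D_N(s)\le C_\cK\sqrt{\log(N+1)/N}$. Feeding this back into Step 2 gives $A_s\le \frac4{\rho_0}F_{N,s}(\rho_0)\le C_\cK\sqrt{\log(N+1)/N}$.

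The expected main obstacle is Step 1, that is, getting the derivative identity exactly, including the diagonal $i=j$ terms and the factor conventions, so that it holds with no error or only an $O(1/N)$ error. Everything else is a direct combination of Lemma~\ref{lem:sublinearpressure} with Jensen's inequality and Gronwall. The nonnegativity $D_N\ge0$ is not needed for this lemma.
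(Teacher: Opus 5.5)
Your proposal is correct and follows the paper's proof essentially step for step. The ingredients match: the exact identity $D_N'(s)=\frac{\beta^2}{4}A_s$ (there is no $O(1/N)$ discrepancy, since the diagonal $g_{ii}$ terms contribute zero on both sides), the bound $\frac{\rho_0}{4}A_s\le F_{N,s}(\rho_0)\le D_N(s)+C_\cK\sqrt{\log(N+1)/N}$ via Lemma~\ref{lem:sublinearpressure}, and Gronwall from $D_N(0)=0$. The only cosmetic difference is that you get $\frac{\rho_0}{4}A_s\le F_{N,s}(\rho_0)$ from Jensen's inequality, whereas the paper phrases the same inequality as convexity of $\rho\mapsto F_{N,s}(\rho)$ with $F_{N,s}(0)=0$ and $\partial_\rho F_{N,s}(0+)=A_s/4$.
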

\begin{proof}
Write 
$$\epsilon_N:= C_{\cK} 
 \sqrt{\frac{\log(N+1)}N}.$$
Similarly to \cite[(14)]{KusuokaNakajima1}, via Gaussian integration by parts (see \cite[Vol.~1,~Sec.~A.4]{Talagrand}), it follows that
\begin{equation}\label{eq:p-derivative}
\phi_N'(s)
=\frac{\beta^2}{4}\left(
(1-q)^2
-
\nu_s[Q_{12}^2]\right).
\end{equation}
%For $0<s<1$, differentiating under the
%expectation and applying \eqref{eq:GIBP} to the variables %$g_{ij}$ and
%$z_i$ gives
%\begin{align*}
% \phi_N'(s)
% ={}&\frac{\beta^2}{4N^2}\sum_{i,j}
% \E\left[1-\braket{\sigma_i\sigma_j}_s^2\right]
% -\frac{\beta^2q}{2N}\sum_i
% \E\left[1-\braket{\sigma_i}_s^2\right]\\
% ={}&\frac{\beta^2}{4}
% \left((1-q)^2-\nu_s[Q_{12}^2]\right).
%\end{align*}
%Here we used
%\[
% \sum_{i,j}\sigma_i^1\sigma_j^1\sigma_i^2\sigma_j^2
% =N^2R_{12}^2,
% \qquad
% \frac1N\sum_i\E\braket{\sigma_i^1\sigma_i^2}_s
% =\nu_s[R_{12}].
%\]
Combining this identity with \eqref{eq:RSpathvalue} and the definition
of $A_s$ in \eqref{eq:ABC} yields
\begin{equation}
 D_N'(s)=\frac{\beta^2}{4}A_s. 
 \label{eq:DNprime}
\end{equation}
%Both sides extend continuously to $s=0,1$ by finite-dimensional
%Gaussian dominated convergence, so the identity also holds for the
%one-sided endpoint derivatives.
On the other hand, convexity of $F_{N,s}(\rho)$ in $\rho \in [0,\infty)$, where $F_{N,s}(\rho)$ is defined in \eqref{eq:normalizedcoupling}, gives
\begin{equation}\label{eq:lemreconcentration1}
   \frac14A_s
 =\left.\partial_\rho F_{N,s}(\rho)\right|_{\rho=0+}
 \le\frac{F_{N,s}(\rho_0)}{\rho_0}.  
\end{equation}
By \eqref{eq:pressureenvelope},
\begin{equation}\label{eq:lemreconcentration2}
 F_{N,s}(\rho_0)
 \le D_N(s)+\epsilon_N.
\end{equation}
Consequently,
\begin{equation}
 D_N'(s)
 \le\frac{\beta^2}{\rho_0}
 \bigl(D_N(s)+\epsilon_N\bigr).
 \label{eq:gronwallDN}
\end{equation}
Gronwall's inequality, the compactness of $\cK$, the continuity of $D_N (s)$ in $s\in [0,1]$, and the fact that $D_N(0)=0$ imply the inequality for $D_N(s)$ in \eqref{eq:preconcentration}.
The estimate for $A_s$ in \eqref{eq:preconcentration} follows from this inequality, \eqref{eq:lemreconcentration1} and \eqref{eq:lemreconcentration2}.
\end{proof}

\subsection{Fixed-deviation overlap concentration}\label{subsec:tails}

\begin{corollary}
\label{cor:tail}
For every $\varepsilon>0$, there are constants
$c_{\cK,\varepsilon},C_{\cK,\varepsilon}>0$ such that
\begin{equation}
 \sup_{\substack{(\beta,h)\in\cK\\0\le s\le1}}
 \nu_s\bigl[|Q_{12}|\ge\varepsilon\bigr]
 \le C_{\cK,\varepsilon}e^{-c_{\cK,\varepsilon}N}.
 \label{eq:tail}
\end{equation}
\end{corollary}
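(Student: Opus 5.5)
The plan is to turn the bound $F_{N,s}(\rho_0)=o_{\cK}(1)$ from Lemma~\ref{lem:preconcentration} into an exponential tail bound. We combine Gaussian concentration for the random variable $\log\braket{e^{\rho_0NQ_{12}^2/2}}_s$ with a Markov-type inequality inside the Gibbs measure, following the outline in Section~\ref{subsec:proof-outline}.

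\textbf{Step 1: the mean is $o(N)$.} From \eqref{eq:lemreconcentration2} and Lemma~\ref{lem:preconcentration} we get
\[
F_{N,s}(\rho_0)\le D_N(s)+\epsilon_N\le 2C_{\cK}\sqrt{\log(N+1)/N}.
\]
Set $Y\coloneqq\log\braket{e^{\rho_0NQ_{12}^2/2}}_s$. Then $\E Y=2NF_{N,s}(\rho_0)=o_{\cK}(N)$, uniformly in $s$ and $(\beta,h)\in\cK$.

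\textbf{Step 2: concentration of $Y$.} We write
\[
Y=\log\sum_{\sigma^1,\sigma^2}e^{H_{N,s}(\sigma^1)+H_{N,s}(\sigma^2)+\rho_0NQ_{12}^2/2}-2\log Z_{N,s}.
\]
As in the proof of Lemma~\ref{lem:sublinearpressure}, each of the two terms has partial derivatives bounded by $C_{\cK}N^{-1/2}$ in each $g_{ij}$ and by $C_{\cK}$ in each $z_i$. Hence $Y$ is Lipschitz in the Gaussian disorder with constant $L\le C_{\cK}\sqrt N$. The Gaussian concentration inequality then gives
\[
\Pbb\bigl(Y\ge\E Y+tN\bigr)\le\exp\bigl(-t^2N^2/(2L^2)\bigr)\le\exp(-c_{\cK}t^2N).
\]

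\textbf{Step 3: splitting.} Fix $\varepsilon\in(0,2]$; the case $\varepsilon>2$ is trivial because $|Q_{12}|\le2$. Choose $N_0=N_0(\cK,\varepsilon)$ such that $\E Y\le\rho_0\varepsilon^2N/8$ for all $N\ge N_0$. Let $E\coloneqq\{Y<\rho_0\varepsilon^2N/4\}$.

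\begin{itemize}
\item On $E^c$: applying Step 2 with $t=\rho_0\varepsilon^2/8$ gives $\Pbb(E^c)\le e^{-c_{\cK}\rho_0^2\varepsilon^4N/64}$.
\item On $E$: pointwise in $(\sigma^1,\sigma^2)$ we have $\1\{|Q_{12}|\ge\varepsilon\}\le e^{\rho_0NQ_{12}^2/2}e^{-\rho_0\varepsilon^2N/2}$. Taking Gibbs expectations yields
\[
\braket{\1\{|Q_{12}|\ge\varepsilon\}}_s\le e^{Y-\rho_0\varepsilon^2N/2}\le e^{-\rho_0\varepsilon^2N/4}.
\]
\end{itemize}

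Summing the two contributions gives
\[
\nu_s[|Q_{12}|\ge\varepsilon]\le\Pbb(E^c)+\E\bigl[\1_E\braket{\1\{|Q_{12}|\ge\varepsilon\}}_s\bigr]\le2e^{-c_{\cK,\varepsilon}N}
\]
for $N\ge N_0$. For $N<N_0$ the bound holds trivially once $C_{\cK,\varepsilon}\ge e^{c_{\cK,\varepsilon}N_0}$. All constants are uniform in $s\in[0,1]$ and $(\beta,h)\in\cK$, since $\rho_0$ and $C_{\cK}$ are.

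The only mildly delicate point is the Lipschitz bound in Step 2, which must be uniform in $s$. For the $z_i$-derivatives, note that each derivative is a Gibbs average of $\beta\sqrt{(1-s)q}\,\sigma_i$, so it is bounded by $C_{\cK}$. Summing squares over $i$ gives $O(N)$. The $g_{ij}$-derivatives contribute $N^2\cdot O(N^{-1})=O(N)$. The added term $\rho_0NQ_{12}^2/2$ does not depend on the disorder, so it does not change these estimates.
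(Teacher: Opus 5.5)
Your proposal is correct and follows the paper's proof essentially step for step. Both arguments bound $\E Y_{N,s}=2NF_{N,s}(\rho_0)=o(N)$ via Lemma~\ref{lem:preconcentration} and \eqref{eq:lemreconcentration2}, use Gaussian concentration with Lipschitz constant $C_{\cK}\sqrt N$ to make $\Pbb(Y_{N,s}>\rho_0\varepsilon^2N/4)$ exponentially small, and apply the exponential Markov bound inside the Gibbs measure on the complementary event. Your explicit treatment of $N<N_0$ and of the uniformity in $s$ of the Lipschitz constant only spells out what the paper leaves implicit.
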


\begin{proof}
Let
\[
 Y_{N,s}
 \coloneqq\log\braket{e^{\rho_0NQ_{12}^2/2}}_s.
\]
By \eqref{eq:normalizedcoupling}, \eqref{eq:preconcentration} and \eqref{eq:lemreconcentration2},
\begin{equation}\label{eq:EY}
 \E Y_{N,s}=2NF_{N,s}(\rho_0)
 \le C_{\cK} N\epsilon_N=o(N)
\end{equation}
uniformly in $s\in [0,1]$.
It is known that the map
\begin{align*}
((g_{ij})_{i,j}, (z_i)_{i}) \mapsto Y_{N,s} = & \log
 \sum_{\sigma^1,\sigma^2}
 \exp\left(
 H_{N,s}(\sigma^1)+H_{N,s}(\sigma^2)
 +\frac{\rho _0 N}{2}Q_{12}^2\right) \\
&- \log
 \sum_{\sigma^1,\sigma^2}
 \exp\left(
 H_{N,s}(\sigma^1)+H_{N,s}(\sigma^2)
 \right) ,
\end{align*}
where we note that $H_{N,s}$ depends on $((g_{ij})_{i,j}, (z_i)_{i})$, is Lipschitz continuous with Lipschitz constant $C_{\cK}\sqrt{N}$
(see the proof of Lemma~\ref{lem:sublinearpressure} or \cite[Theorem~1.3.5]{Talagrand} and the discussion after the theorem).
The Gaussian concentration inequality
(see \cite[Theorem~1.3.4]{Talagrand})
%(see, for example,\cite{Ledoux})
therefore gives, for all
large enough $N$,
\[
 \mathbb P\left(
 |Y_{N,s} - \mathbb E Y_{N,s}|> t \right)
\le \exp \left( - N^{-1 } t^2/C_{\cK}  \right) \qquad (t\in [0,\infty )).
\]
This inequality and \eqref{eq:EY} yield
\begin{equation}\label{eq:tailY}
\mathbb P\left(
 Y_{N,s}>\frac{\rho_0\varepsilon^2N}{4}\right) \le C_{\cK,\varepsilon}e^{-c_{\cK,\varepsilon}N}.
\end{equation}
For each realization of the disorder, on $\{|Q_{12}|\ge\varepsilon\}$ we have
\(
\1_{\{|Q_{12}|\ge\varepsilon\}}
\le
\exp\left(
\frac{\rho_0N}{2}(Q_{12}^2-\varepsilon^2)
\right).
\) 
Therefore,
\[
\left\langle \1_{\{|Q_{12}|\ge\varepsilon\}}\right\rangle_s
\le
\exp\left(-\frac{\rho_0\varepsilon^2N}{2}\right)
\left\langle
\exp\left(\frac{\rho_0NQ_{12}^2}{2}\right)
\right\rangle_s
=
\exp\left(
-\frac{\rho_0\varepsilon^2N}{2}+Y_{N,s}
\right).
\]
Hence, 
\eqref{eq:tailY} implies
\begin{align*}
\nu_s\bigl[|Q_{12}|\ge\varepsilon\bigr]
&\le
\E\left[
\exp\left(
-\frac{\rho_0\varepsilon^2N}{2}+Y_{N,s}
\right)
\1_{\left\{Y_{N,s}\le \frac{\rho_0\varepsilon^2N}{4}\right\}}
\right]
+
\mathbb P\left(
Y_{N,s}>\frac{\rho_0\varepsilon^2N}{4}
\right)
\\
&\le
\exp\left(-\frac{\rho_0\varepsilon^2N}{4}\right)
+
\mathbb P\left(
Y_{N,s}>\frac{\rho_0\varepsilon^2N}{4}
\right).
\end{align*}
Therefore, we obtain \eqref{eq:tail}.
\end{proof}
\section{Cavity approximations}
\label{app:cavity-identities}

Recall from \eqref{eq:ABC} that
\[
 A_s=\nu_s[Q_{12}^2],\qquad
 B_s=\nu_s[Q_{12}Q_{13}],\qquad
 C_s=\nu_s[Q_{12}Q_{34}].
\]
Introduce the three cavity modes
\begin{equation}
 U_s\coloneqq A_s-4B_s+3C_s,\qquad
 V_s\coloneqq2B_s-3C_s,\qquad
 D_s\coloneqq A_s-2B_s+C_s,
 \label{eq:cavity-modes}
\end{equation}
and put
\begin{equation}
 \kappa \coloneqq1-4q+3r,
 \qquad
 \zeta\coloneqq2q+q^2-3r.
 \label{eq:cavity-parameters}
\end{equation}

\begin{proposition}
\label{prop:cavity}
There are remainders
$\mathcal R^U_{N,s},\mathcal R^V_{N,s},\mathcal R^D_{N,s}$ and $C_\cK>0$ such that
\begin{align}
 (1-s\beta^2\kappa )U_s
 &=\frac{\kappa }N+\mathcal R^U_{N,s},
 \label{eq:cavity-U}\\
 (1-s\beta^2\kappa )V_s-s\beta^2\zeta U_s
 &=\frac{\zeta}N+\mathcal R^V_{N,s},
 \label{eq:cavity-V}\\
 (1-s\alpha)D_s
 &=\frac{\alpha}{\beta^2N}+\mathcal R^D_{N,s},
 \label{eq:cavity-D}
\end{align}
and uniformly on $\cK\times[0,1]$,
\begin{equation}
 \max_{X\in\{U,V,D\}}|\mathcal R^X_{N,s}|
 \le C_{\cK}(N^{-3/2}+\nu_s[|Q_{12}|^3]).
 \label{eq:cavity-remainders}
\end{equation}
\end{proposition}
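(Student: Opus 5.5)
We would follow Talagrand's second-moment cavity computation (\cite[Sections~1.8--1.10]{Talagrand}), adapted to the smart-path Hamiltonian $H_{N,s}$ and to the cavity field $h+\beta\sqrt{(1-s)q}\,z_i$. By symmetry among sites, write
\[
 A_s=\nu_s[(\varepsilon_1\varepsilon_2-q)Q_{12}],\qquad B_s=\nu_s[(\varepsilon_1\varepsilon_2-q)Q_{13}],\qquad C_s=\nu_s[(\varepsilon_1\varepsilon_2-q)Q_{34}],
\]
where $\varepsilon_a=\sigma_N^a$. Then replace $Q_{ab}$ by $Q_{ab}^-+\frac1N(\varepsilon_a\varepsilon_b-q)$, where $Q^-$ is the overlap of the first $N-1$ spins.

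The $\frac1N$ pieces produce the explicit source terms. Their replica-symmetric values at $t=0$ are $\nu_0[(\varepsilon_1\varepsilon_2-q)^2]=1-q^2$, $\nu_0[(\varepsilon_1\varepsilon_2-q)(\varepsilon_1\varepsilon_3-q)]=q-q^2$, and $\nu_0[(\varepsilon_1\varepsilon_2-q)(\varepsilon_3\varepsilon_4-q)]=r-q^2$. Taking the combinations $U,V,D$ should give $\kappa$, $\zeta$ and $1-2q+r=\alpha/\beta^2$.

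For the $Q^-$ pieces, we would introduce Talagrand's interpolation in the last spin, $t\in[0,1]$. It couples $\varepsilon$ to the $(N-1)$-system only through $\sqrt t\,\beta\sqrt{s}\,\frac{1}{\sqrt{2N}}\sum_j(g_{Nj}+g_{jN})\sigma_j\varepsilon$, and uses $\sqrt{1-t}$ times an independent Gaussian field of variance $s\beta^2 q$. Along this path we would use the standard derivative formula
\[
 \nu_t'(f)=\frac{s\beta^2}{2}\Bigl(\sum_{a<b}\dots\Bigr),
\]
a sum of terms $\nu_t(f\,\varepsilon_a\varepsilon_b Q^-_{ab})$ with replica-count coefficients. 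Since $\nu_0[(\varepsilon_1\varepsilon_2-q)Q^-]=0$ because the last spin decouples and $\E\tanh^2=q$, a Taylor expansion to second order in $t$ gives
\[
 \nu_1(f)=\nu_0(f)+\nu_0'(f)+O\bigl(\sup_t|\nu_t''(f)|\bigr).
\]
Here $\nu_t''$ is cubic in $Q^-$ and is bounded by $C_\cK\nu_s|Q^-|^3$. The first-order term $\nu_0'$ factorizes into last-spin moments, which are explicit in $q,r$, times quadratic $(N-1)$-overlap moments $A^-,B^-,C^-$. The switch from $\nu_t$-weights to $\nu$-weights (a further $O(N^{-1/2}\nu|Q|^2+\nu|Q|^3)$ correction) and from $A^-$ to $A_s$ (error $O(N^{-1}\nu|Q|+N^{-2})$) would be absorbed as follows: $N^{-1}\nu|Q|\le N^{-3/2}+N^{-1/2}\nu Q^2$, and $\nu Q^2\le \nu|Q|^{3}{}^{2/3}$ will be handled by Young's inequality as $N^{-1/2}\nu Q^2\le C(N^{-3/2}+\nu|Q|^3)$.

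The factorization yields a $3\times3$ linear system $(A,B,C)=\frac1N w+s\beta^2 M(A,B,C)+\mathcal R$. The remaining work is linear algebra: the combinations $U,V,D$ diagonalize, or at least triangularize, $M$. $D$ is the replicon direction with eigenvalue $1-2q+r$, giving the factor $s\alpha$. $U$ is the longitudinal direction with eigenvalue $\kappa$, and $V$ is coupled to $U$ via $\zeta$. This matches Talagrand's computation with $\beta^2$ replaced by $s\beta^2$, and we would verify it with the moment table for $\tanh$ of the cavity field.

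The main obstacle is the careful bookkeeping of the many replica terms in $\nu_0'$. In that derivative, replica indices up to $6$ appear, and each term must be checked to reduce to $A^-,B^-,C^-$ with the right coefficients, so that exactly the stated triangular structure emerges. We would first do this for $D$, where massive cancellations occur, and then obtain $U$ and $V$ by the same table. Uniformity on $\cK\times[0,1]$ follows because all constants involve only $\beta$, bounded moments of $\tanh$, and $q\in[q_\cK,1)$.
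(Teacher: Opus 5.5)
Your proposal is correct and follows essentially the same route as the paper. Both arguments use site exchangeability, the last-spin interpolation with $\nu_{s,0}[(\varepsilon_a\varepsilon_b-q)Q^-]=0$, and a second-order Taylor expansion whose cubic remainder is transported back to $\nu_s[|Q_{12}|^3]$. Both then apply Young's inequality to the $N^{-1}\nu|Q|$ terms and use the triangular structure in the modes $U,V,D$; putting the last-spin factor on a fixed edge and moving the overlap, rather than the reverse, is equivalent by replica symmetry.

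One slip to fix: in the derivative formula the prefactor in front of $\sum_{a<b}\nu_t(f\varepsilon_a\varepsilon_bQ^-_{ab})$ must be $s\beta^2$, not $s\beta^2/2$ as you wrote. With $s\beta^2/2$ you would obtain the factor $s\alpha/2$ instead of $s\alpha$.
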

\begin{proof}
 All implicit constants in this proof depend only on $\cK$.  Write
\[
\varepsilon_a\coloneqq\sigma_N^a,
 \quad
 Q^-_{ab}\coloneqq\frac1N\sum_{i<N}\sigma_i^a\sigma_i^b-q,
\]
so that
\begin{equation}
 Q_{ab}=Q^-_{ab}+\frac1N\varepsilon_a\varepsilon_b.
 \label{eq:Qab}
\end{equation}
By site exchangeability after averaging over the disorder,
\begin{align*}
 A_s&=\nu_s\!\left[Q_{12}(\varepsilon_1\varepsilon_2-q)\right],&
 B_s&=\nu_s\!\left[Q_{12}(\varepsilon_1\varepsilon_3-q)\right],&
 C_s&=\nu_s\!\left[Q_{12}(\varepsilon_3\varepsilon_4-q)\right].
\end{align*}

Use the last-spin cavity interpolation.  Write
$\sigma=(\rho,\varepsilon)$, with $\rho\in\{-1,1\}^{N-1}$, and replace
the interaction of the last spin with the first $N-1$ spins by
\[
 \frac{\beta\sqrt{su}}{\sqrt N}
 \sum_{i<N}\frac{g_{iN}+g_{Ni}}{\sqrt2}\rho_i\varepsilon
 +\beta\sqrt{s(1-u)q}\,\widehat z\,\varepsilon,
 \qquad 0\le u\le1,
\]
where $\widehat z$ is an independent standard Gaussian.  Denote the
corresponding averaged Gibbs expectation by $\nu_{s,u}$.  Then
$\nu_{s,1}=\nu_s$.  At $u=0$, the last spin is independent of the first
$N-1$ spins and is subject to the field
\[
 h+\beta\sqrt{(1-s)q}\,z_N+\beta\sqrt{sq}\,\widehat z
 \stackrel{\mathrm d}=h+\beta\sqrt q\,Z.
\]
Thus, for distinct replica indices,
\begin{equation}
 \mathbb E_0[\varepsilon_a\varepsilon_b]=q,
 \qquad
 \mathbb E_0[\varepsilon_a\varepsilon_b\varepsilon_c\varepsilon_d]=r.
 \label{eq:last-spin-moments-proof}
\end{equation}

Gaussian integration by parts gives, for every bounded function $F$ of
$n$ replicas,
\begin{align}
 \frac{\mathrm d}{\mathrm du}\nu_{s,u}[F]
 =s\beta^2\Bigg\{&
 \sum_{1\le a<b\le n}
 \nu_{s,u}[F\varepsilon_a\varepsilon_bQ^-_{ab}]
 -n\sum_{a=1}^n
 \nu_{s,u}[F\varepsilon_a\varepsilon_{n+1}Q^-_{a,n+1}]
 \nonumber\\
 &+\frac{n(n+1)}2
 \nu_{s,u}[F\varepsilon_{n+1}\varepsilon_{n+2}
 Q^-_{n+1,n+2}]
 \Bigg\}.
 \label{eq:cavity-derivative-proof}
\end{align}
Indeed, the derivative of the covariance of the interpolating
last-spin field is
$s\beta^2\varepsilon\varepsilon'Q^-(\rho,\rho')$, while its diagonal
value is independent of the configuration.

Set $M_3(u)\coloneqq\nu_{s,u}[|Q^-_{12}|^3]$.  Formula
\eqref{eq:cavity-derivative-proof} and $|Q^-_{ab}|\le2$ give
\[
 |M_3'(u)|\le C_{\cK}M_3(u).
\]
Since
\[
 |Q^-_{12}|^3\le4|Q_{12}|^3+\frac4{N^3},
\]
Gronwall's inequality, applied backward from $u=1$, yields
\begin{equation}
 \sup_{0\le u\le1}\nu_{s,u}[|Q^-_{12}|^3]
 \le C_{\cK}(N^{-3/2}+\nu_s[|Q_{12}|^3]).
 \label{eq:M3-gronwall-proof}
\end{equation}

For
\[
 P^-=Q^-_{ab}Q^-_{cd},
 \qquad (ab,cd)\in\{(12,12),(12,13),(12,34)\},
\]
the derivative of $\nu_{s,u}[P^-]$ is a finite linear combination of
expectations of three cavity overlaps.  Hence H\"older's inequality and
\eqref{eq:M3-gronwall-proof} show that
\[
 |\nu_{s,0}[P^-]-\nu_{s,1}[P^-]|
 \le C_{\cK}(N^{-3/2}+\nu_s[|Q_{12}|^3]).
\]
Moreover, \eqref{eq:Qab} gives
\[
 |P^--Q_{ab}Q_{cd}|
 \le\frac{|Q_{ab}|+|Q_{cd}|}{N}+\frac1{N^2}.
\]
Using
\[
 \frac{|x|}{N}\le\frac{|x|^3}{3}+\frac{2}{3N^{3/2}},
\]
we obtain
\begin{equation}
 \left|\nu_{s,0}[P^-]-\nu_s[Q_{ab}Q_{cd}]\right|
 \le C_{\cK}(N^{-3/2}+\nu_s[|Q_{12}|^3]).
 \label{eq:endpoint-replacement-proof}
\end{equation}

Define
\begin{align*}
 A_s^-&\coloneqq\nu_{s,0}[(Q^-_{12})^2],&
 B_s^-&\coloneqq\nu_{s,0}[Q^-_{12}Q^-_{13}],&
 C_s^-&\coloneqq\nu_{s,0}[Q^-_{12}Q^-_{34}],
\end{align*}
and form $U_s^-,V_s^-,D_s^-$ from $A_s^-,B_s^-,C_s^-$ as in
\eqref{eq:cavity-modes}.  It follows from
\eqref{eq:endpoint-replacement-proof} that
\begin{equation}
 |U_s^--U_s|+|V_s^--V_s|+|D_s^--D_s|
 \le C_{\cK}(N^{-3/2}+\nu_s[|Q_{12}|^3]).
 \label{eq:cavity-mode-replacement}
\end{equation}

Next write
\begin{align}
 A_s&=\nu_s[G_A]+\frac1N\nu_s[E_A],\nonumber\\
 B_s&=\nu_s[G_B]+\frac1N\nu_s[E_B],\nonumber\\
 C_s&=\nu_s[G_C]+\frac1N\nu_s[E_C],
 \label{eq:cavity-decomposition-proof}
\end{align}
where
\begin{align*}
 G_A&\coloneqq Q^-_{12}(\varepsilon_1\varepsilon_2-q),&
 E_A&\coloneqq\varepsilon_1\varepsilon_2
 (\varepsilon_1\varepsilon_2-q),\\
 G_B&\coloneqq Q^-_{12}(\varepsilon_1\varepsilon_3-q),&
 E_B&\coloneqq\varepsilon_1\varepsilon_2
 (\varepsilon_1\varepsilon_3-q),\\
 G_C&\coloneqq Q^-_{12}(\varepsilon_3\varepsilon_4-q),&
 E_C&\coloneqq\varepsilon_1\varepsilon_2
 (\varepsilon_3\varepsilon_4-q).
\end{align*}
Put
\[
 g_j(u)\coloneqq\nu_{s,u}[G_j],
 \qquad
 e_j(u)\coloneqq\nu_{s,u}[E_j],
 \qquad j\in\{A,B,C\},
\]
and take the same linear combinations as in \eqref{eq:cavity-modes}:
\begin{align*}
 g_U&\coloneqq g_A-4g_B+3g_C,&
 g_V&\coloneqq2g_B-3g_C,&
 g_D&\coloneqq g_A-2g_B+g_C,
\end{align*}
with analogous definitions for $e_U,e_V,e_D$.

Independence at $u=0$ gives $g_j(0)=0$.  Applying
\eqref{eq:cavity-derivative-proof} twice, H\"older's inequality and
\eqref{eq:M3-gronwall-proof} give
\begin{equation}
 g_X(1)=g_X'(0)+C_{\cK}(N^{-3/2}+\nu_s[|Q_{12}|^3]),
 \qquad X\in\{U,V,D\}.
 \label{eq:cavity-Taylor-modes}
\end{equation}
For a replica edge $\{a,b\}$, \eqref{eq:last-spin-moments-proof}
implies
\begin{equation}
 \mathbb E_0[(\varepsilon_p\varepsilon_q-q)
 \varepsilon_a\varepsilon_b]
 =
 \begin{cases}
 1-q^2,&\{a,b\}=\{p,q\},\\
 q-q^2,&\{a,b\}\text{ shares one endpoint with }\{p,q\},\\
 r-q^2,&\{a,b\}\cap\{p,q\}=\varnothing.
 \end{cases}
 \label{eq:edge-rule-proof}
\end{equation}
Substitution into \eqref{eq:cavity-derivative-proof}, followed directly
by the three linear combinations above, gives
\begin{align}
 g_U'(0)&=s\beta^2\kappa  U_s^-,\nonumber\\
 g_V'(0)&=s\beta^2(\zeta U_s^-+\kappa V_s^-),
 \label{eq:scalar-first-derivatives}\\
 g_D'(0)&=s\alpha D_s^- .\nonumber
\end{align}

The last-spin moments also give
\begin{align}
 e_U(0)&=(1-q^2)-4(q-q^2)+3(r-q^2)=\kappa ,\nonumber\\
 e_V(0)&=2(q-q^2)-3(r-q^2)=\zeta,
 \label{eq:scalar-diagonal-terms}\\
 e_D(0)&=1-2q+r=\frac{\alpha}{\beta^2}.
 \nonumber
\end{align}
Since $E_A,E_B,E_C$ are uniformly bounded,
\eqref{eq:cavity-derivative-proof} yields
\[
 |e_X'(u)|\le C_{\cK}\nu_{s,u}[|Q^-_{12}|],
 \qquad X\in\{U,V,D\}.
\]
The inequality
\(
 \frac{|Q^-_{12}|}{N}
 \le\frac{|Q^-_{12}|^3}{3}+\frac{2}{3N^{3/2}}
\) 
and \eqref{eq:M3-gronwall-proof} therefore imply
\begin{equation}
 \frac1N|e_X(1)-e_X(0)|
 \le C_{\cK}(N^{-3/2}+\nu_s[|Q_{12}|^3]),
 \qquad X\in\{U,V,D\}.
 \label{eq:scalar-diagonal-remainders}
\end{equation}

From
\eqref{eq:cavity-mode-replacement}, \eqref{eq:cavity-decomposition-proof}, 
\eqref{eq:cavity-Taylor-modes}, \eqref{eq:scalar-first-derivatives},
\eqref{eq:scalar-diagonal-terms}, and
\eqref{eq:scalar-diagonal-remainders}, it follows that
\begin{align*}
 U_s&=s\beta^2\kappa U_s+\frac{\kappa }N
 +C_{\cK}(N^{-3/2}+\nu_s[|Q_{12}|^3]),\\
 V_s&=s\beta^2(\zeta U_s+\kappa V_s)+\frac{\zeta}N
 +C_{\cK}(N^{-3/2}+\nu_s[|Q_{12}|^3]),\\
 D_s&=s\alpha D_s+\frac{\alpha}{\beta^2N}
 +C_{\cK}(N^{-3/2}+\nu_s[|Q_{12}|^3]).
\end{align*}
These yield \eqref{eq:cavity-U}--\eqref{eq:cavity-remainders}.
\end{proof}
Since $r\le q$,
\[
 \alpha-\beta^2\kappa=2\beta^2(q-r)\ge0.
\]
Thus $\beta^2\kappa\le\alpha<1$.  If $\kappa<0$, then
$1-s\beta^2\kappa\ge1$, while if $\kappa\ge0$, then
$1-s\beta^2\kappa\ge1-\alpha$.  Hence \eqref{eq:delta} gives, uniformly
on $\cK\times[0,1]$,
\begin{equation}
 1-s\alpha\ge\delta_{\cK},\qquad
 1-s\beta^2\kappa\ge\delta_{\cK}.
 \label{eq:denominator-bounds}
\end{equation}
Solving \eqref{eq:cavity-U}--\eqref{eq:cavity-D} in the order
$U_s,V_s,D_s$ yields
\[
 |U_s|+|V_s|+|D_s|
 \le C_{\cK}\left(\frac1N+N^{-3/2}+\nu_s[|Q_{12}|^3]\right).
\]
Since $A_s=-V_s-2U_s+3D_s$, it follows that
\begin{equation}
\nu_s[|Q_{12}|^2] = A_s\le\frac{C_{\cK}}N+C_{\cK}\nu_s[|Q_{12}|^3].
 \label{eq:preabsorb}
\end{equation}

\section{Quantitative bounds for overlaps, free energy, and susceptibility}\label{sec:conclusion}

\subsection{Absorption of the cubic remainder and the proof of \eqref{eq:main1}}\label{subsec:absorb}

Fix $\varepsilon>0$.  Since $|Q_{12}|\le2$,
\begin{align}
 \nu_s[|Q_{12}|^3]
 &\le
 \varepsilon\,\nu_s[Q_{12}^2]
 +8\,\nu_s[|Q_{12}|>\varepsilon]\nonumber\\
 &\le
 \varepsilon A_s
 +8C_{\cK,\varepsilon}e^{-c_{\cK,\varepsilon}N},
 \label{eq:thirdsplit}
\end{align}
where Corollary~\ref{cor:tail} was used.  Choose
$\varepsilon=\varepsilon_{\cK}=1/(2C_\cK)$
so that the coefficient of $A_s$ obtained by substituting
\eqref{eq:thirdsplit} into \eqref{eq:preabsorb} is at most $1/2$.
We obtain
\[
 A_s=\nu_s[Q_{12}^2] \le\frac{C_{\cK}'}N,
\]
with some $C_\cK'>0$ uniformly on $\cK\times[0,1]$.  This proves \eqref{eq:main1}. 

\subsection{Quantitative estimate of the free energy: Proof of \eqref{eq:main2}}\label{subsec:freeenergy}

Recall that $D_N(s):= P_s^* -\phi_N(s)$. The identity \eqref{eq:DNprime}, $D_N'(s) = \frac{\beta^2}{4}\nu_s[Q_{12}^2]$, with $P_1^*  =  \phi^{\mathrm{RS}}(\beta,h)$ and  $D_N(0)=0$ gives
\begin{equation}
 \phi^{\mathrm{RS}}(\beta,h)-\phi_N(1)
 =D_N(1)=
 \frac{\beta^2}{4}\int_0^1\nu_s[Q_{12}^2]\,\dd s.
 \label{eq:freeenergyidentity}
\end{equation}
The integrand is nonnegative, and the  bound above implies that the right-hand side is at most $C_{\cK}/N$.  This proves \eqref{eq:main2}.

\subsection{Replicon susceptibility: Proof of \eqref{eq:main3}}\label{subsec:susceptibility}

The $O(N^{-1})$ second-moment estimate and
\eqref{eq:thirdsplit} imply, for every fixed $\varepsilon>0$,
\[
 \sup_{\cK\times[0,1]}
 N\nu_s[|Q_{12}|^3]
 \le C_{\cK}\varepsilon
 +8N C_{\cK,\varepsilon}e^{-c_{\cK,\varepsilon}N}.
\]
First let $N\to\infty$ and then $\varepsilon\downarrow0$.  Thus
\begin{equation}\label{eq:Nnu3}
 \nu_s[|Q_{12}|^3]=o_{\cK}(N^{-1})
\end{equation}
uniformly in $s$.  Equation \eqref{eq:cavity-D}, together with
\eqref{eq:cavity-remainders} and \eqref{eq:denominator-bounds}, now gives
\[
 ND_s
 =
 \frac{\alpha}{\beta^2(1-s\alpha)}+o_{\cK}(1).
\]
This proves \eqref{eq:main3} and completes the proof of
Theorem~\ref{thm:main}.

% This file is intended to be inserted immediately after Appendix A.
% It contains exactly one section and uses the notation already introduced
% in the main text and in the cavity appendix.

\section{Proof of the central limit theorem for the overlap}
\label{sec:overlap-clt}
We follow a similar argument to \cite[Chapter 1]{Talagrand}. In this section we work at $s=1$ and suppress the subscript $s$.  The argument above \eqref{eq:denominator-bounds} gives
$\beta^2\kappa\le\alpha<1$, and hence
\begin{equation}
 1-\alpha>0,\qquad
 1-\beta^2\kappa>0.
 \label{eq:clt-positive-denominators}
\end{equation}
Define
\begin{equation}
 \sigma^2
 \coloneqq
 \frac{3\alpha}{\beta^2(1-\alpha)}
 -\frac{2\kappa}{1-\beta^2\kappa}
 -\frac{\zeta}{(1-\beta^2\kappa)^2}.
 \label{eq:clt-variance}
\end{equation}

We verify that $\sigma^2>0$. Set
$$
w\coloneqq\frac{\beta^2(q-r)}{\alpha}.
$$
Note that $0<\alpha <1$, $q^2\leq  r\leq q$, and $w \geq 0$ due to our assumptions.  
Moreover, since 
$$
\kappa
=1-4q+3r
= \frac{\alpha}{\beta^2} -2(q-r),\quad \zeta
=2q+q^2-3r
=2(q-r)+(q^2-r)
\le 2(q-r),
$$
we have 
$$
\frac{\beta^2 \kappa}{\alpha}=1-2w,\quad \frac{\beta^2 \zeta}{\alpha}\le2w.
$$
Thus, $\beta^2 \kappa \leq \alpha<1$ and 
we obtain
$$
\frac{\beta^2 \sigma^2}{\alpha}
=
\frac{3}{1-\alpha}
-\frac{2(1-2w)}{1-\beta^2\kappa}
-\frac{\beta^2\zeta}{\alpha(1-\beta^2\kappa)^2}
\ge
\frac{3}{1-\alpha}
-\frac{2(1-2w)}{1-\beta^2\kappa}
-\frac{2w}{(1-\beta^2\kappa)^2}.
$$
Using $1-\beta^2\kappa=1-\alpha+2\alpha w$,
$$
\frac{3}{1-\alpha}
-\frac{2(1-2w)}{1-\beta^2\kappa}
-\frac{2w}{(1-\beta^2\kappa)^2}
=
\frac{
(1-\alpha)^2
+2(1-\alpha)(1+2\alpha)w
+4\alpha(\alpha+2)w^2
}{
(1-\alpha)(1-\beta^2\kappa)^2
}
\ge
\frac{1-\alpha}{(1-\beta^2\kappa)^2}.
$$
Therefore
$$
\sigma^2
\ge
\frac{\alpha(1-\alpha)}{\beta^2 (1-\beta^2\kappa)^2}
>0.
$$

Now we give the proof of Theorem~\ref{thm:overlap-clt-intro}.

\begin{proof}[Proof of Theorem~\ref{thm:overlap-clt-intro}]
Let $\nu_u$, $0\le u\le1$, be the last-spin interpolation from
Section~\ref{app:cavity-identities}.  We write $\nu= \nu_1$, and
\[
 \varepsilon_a:=\sigma_N^a,\qquad
 Q^-_{ab}:=\frac1N\sum_{i<N}\sigma_i^a\sigma_i^b-q,\qquad
 Q_{ab}:=Q^-_{ab}+\frac1N\varepsilon_a\varepsilon_b.
\]
The overlap estimate \eqref{eq:main1} gives
\begin{equation}
 \nu[Q_{12}^2]\le\frac{C}{N}.
 \label{eq:clt-endpoint-second-moment}
\end{equation}
For $M_2(u)=\nu_u[(Q^-_{12})^2]$,
\eqref{eq:cavity-derivative-proof} and $|Q^-_{ab}|\le2$ imply
\begin{equation}
 |M_2'(u)|\le C M_2(u).
 \label{eq:clt-M2-differential}
\end{equation}
Since $M_2(1)\le2\nu[Q_{12}^2]+2N^{-2}$, the backward Gronwall inequality and \eqref{eq:clt-endpoint-second-moment} yield
\begin{equation}
 \sup_{0\le u\le1}\nu_u[(Q^-_{12})^2]\le\frac{C}{N}.
 \label{eq:clt-uniform-cavity-second}
\end{equation}
The Cauchy--Schwarz inequality therefore gives, for fixed edges
$ab,cd$,
\begin{align}
 \sup_{0\le u\le1}\nu_u[|Q^-_{ab}|]
 \le\frac{C}{\sqrt N},
\quad  \sup_{0\le u\le1}\nu_u[|Q^-_{ab}Q^-_{cd}|]
 \le\frac{C}{N}.
 \label{eq:clt-cavity-product}
\end{align}

Fix $f\in C^2_b(\mathbb R;\mathbb C)$, i.e. $\max_{n\in \{0,1,2\}}\|f^{(n)}\|_\infty<\infty$, and set
$X_{ab}:=\sqrt N\,Q_{ab}$.  Define
\begin{align}
 T_A(f)&\coloneqq\nu[X_{12}f(X_{12})],
 \label{eq:clt-TA}\\
 T_B(f)&\coloneqq\nu[X_{13}f(X_{12})],
 \label{eq:clt-TB}\\
 T_C(f)&\coloneqq\nu[X_{34}f(X_{12})].
 \label{eq:clt-TC}
\end{align}
Site exchangeability gives
\begin{equation}
 T_E(f)=\sqrt N\,\nu[
 \overline\varepsilon_{e(E)}f(X_{12})],
 \label{eq:clt-site-exchangeability}
\end{equation}
where $\overline\varepsilon_{ab}\coloneqq\varepsilon_a\varepsilon_b-q$, $e(A)\coloneqq 12$, $e(B)\coloneqq 13$ and $e(C)\coloneqq 34$. 
Put
\[
 g_E(u)\coloneqq
 \sqrt N\,\nu_u[\overline\varepsilon_{e(E)}f(X_{12})],
 \qquad E\in\{A,B,C\}.
\]
Then $g_E(1)=T_E(f)$.  We write $O_f(\cdot)$ for a bound whose constant
depends only on $\beta,h$ and
$\|f\|_\infty+\|f'\|_\infty+\|f''\|_\infty$.

Remove the last-spin contribution from the argument of $f$ by setting
\begin{equation}
 Y\coloneqq\sqrt N\,Q^-_{12}+\frac q{\sqrt N}.
 \label{eq:clt-Yminus}
\end{equation}
Then
\begin{equation}
 X_{12}=Y+\frac{\overline\varepsilon_{12}}{\sqrt N}.
 \label{eq:clt-X-Y}
\end{equation}
At $u=0$, \eqref{eq:last-spin-moments-proof} gives
\begin{align}
 \mathbb E_0[\overline\varepsilon_{12}^2]
 &=1-q^2,\,
 \mathbb E_0[\overline\varepsilon_{13}\overline\varepsilon_{12}]
 =q-q^2,\,
 \mathbb E_0[\overline\varepsilon_{34}\overline\varepsilon_{12}]
 =r-q^2.
\end{align}
Taylor expansion in \eqref{eq:clt-X-Y} and independence at $u=0$ imply
\begin{equation}
 g_E(0)=\theta_E\nu_0[f'(Y)]+O_f(N^{-1/2}),
 \label{eq:clt-g-zero-pre}
\end{equation}
where
\begin{equation}
 \theta_A=1-q^2,\qquad
 \theta_B=q-q^2,\qquad
 \theta_C=r-q^2.
 \label{eq:clt-theta}
\end{equation}
By \eqref{eq:cavity-derivative-proof} and
\eqref{eq:clt-cavity-product},
\[
 \left|\frac{\mathrm d}{\mathrm du}\nu_u[f'(Y)]\right|
 \le\frac{C\|f'\|_\infty}{\sqrt N}.
\]
Together with \eqref{eq:clt-X-Y}, this gives 
\begin{equation}
 \nu_0[f'(Y)]=\nu[f'(X_{12})]+O_f(N^{-1/2}).
 \label{eq:clt-derivative-endpoint-replacement}
\end{equation}
Hence, in view of \eqref{eq:clt-g-zero-pre} it holds that
\begin{equation}
 g_E(0)=\theta_E m_f+O_f(N^{-1/2}),
 \qquad m_f\coloneqq\nu[f'(X_{12})].
 \label{eq:clt-g-zero}
\end{equation}

Define
\begin{align}
 \widetilde T_A(f)
 &\coloneqq\sqrt N\,\nu_0[Q^-_{12}f(Y)],
 \label{eq:clt-Ttilde-A}\\
 \widetilde T_B(f)
 &\coloneqq\sqrt N\,\nu_0[Q^-_{13}f(Y)],
 \label{eq:clt-Ttilde-B}\\
 \widetilde T_C(f)
 &\coloneqq\sqrt N\,\nu_0[Q^-_{34}f(Y)].
 \label{eq:clt-Ttilde-C}
\end{align}
Since we can replace $f(X_{12})$ by $f(Y)$ with error
$O_f(N^{-1/2})$ due to 
\eqref{eq:clt-cavity-product} and \eqref{eq:clt-X-Y},   a similar calculation to 
\eqref{eq:scalar-first-derivatives} using \eqref{eq:cavity-derivative-proof} specialized to $s=1$, gives
\begin{align}
 g_A'(0)&=\beta^2\bigl(
 \theta_A\widetilde T_A-4\theta_B\widetilde T_B
 +3\theta_C\widetilde T_C\bigr)+O_f(N^{-1/2}),
 \label{eq:clt-row-A}\\
 g_B'(0)&=\beta^2\bigl(
 \theta_B\widetilde T_A
 +(\theta_A-2\theta_B-3\theta_C)\widetilde T_B
 +(6\theta_C-3\theta_B)\widetilde T_C\bigr)
 +O_f(N^{-1/2}),
 \label{eq:clt-row-B}\\
 g_C'(0)&=\beta^2\bigl(
 \theta_C\widetilde T_A
 +(4\theta_B-8\theta_C)\widetilde T_B
 +(\theta_A-8\theta_B+10\theta_C)\widetilde T_C\bigr)
 +O_f(N^{-1/2}).
 \label{eq:clt-row-C}
\end{align}
Denote $T=(T_A,T_B,T_C)^{\mathsf T}$, $\widetilde T=(\widetilde T_A,\widetilde T_B,\widetilde T_C)^{\mathsf T}$ and $\theta=(\theta_A,\theta_B,\theta_C)^{\mathsf T}$, and let
\begin{equation}
 \mathcal M_1
 :=\beta^2
 \begin{pmatrix}
 \theta_A&-4\theta_B&3\theta_C\\
 \theta_B&\theta_A-2\theta_B-3\theta_C&6\theta_C-3\theta_B\\
 \theta_C&4\theta_B-8\theta_C&\theta_A-8\theta_B+10\theta_C
 \end{pmatrix}.
 \label{eq:clt-cavity-matrix}
\end{equation}
Then,  by \eqref{eq:clt-row-A}, \eqref{eq:clt-row-B} and \eqref{eq:clt-row-C}, we have
\begin{equation}
(g_A'(0),g_B'(0),g_C'(0))^{\mathsf T}=\mathcal M_1 \widetilde T(f)+O_f(N^{-1/2}).
 \label{eq:clt-g-prime-zero}
\end{equation}

Applying \eqref{eq:cavity-derivative-proof} twice implies that $g_E''(u)$ is a finite linear combination of terms of the form
\[
\sqrt N\,\nu_u\!\left[ \overline\varepsilon_{e(E)}\,f(X_{12})\,\Xi\, Q^-_{ab}Q^-_{cd} \right],
\]
where $\Xi$ is a product of last-spin variables and hence $|\Xi|\le1$. We therefore have, by \eqref{eq:clt-cavity-product},
\begin{equation}
 \sup_{0\le u\le1}|g_E''(u)|\le C_f \sqrt N\, \sup_{0\le u\le1}\nu_u\!\left[|Q^-_{ab}Q^-_{cd}|\right]\le O_f(N^{-1/2}).
 \label{eq:clt-second-cavity-derivative}
\end{equation}
Hence,
\begin{equation}
 T_E(f)=g_E(0)+g_E'(0)+O_f(N^{-1/2}).
 \label{eq:clt-cavity-Taylor}
\end{equation}
Similarly, \eqref{eq:cavity-derivative-proof} and \eqref{eq:clt-cavity-product} give
\begin{equation}
 \sqrt N\,\nu_0[Q^-_{e(E)}f(Y)]
 =
 \sqrt N\,\nu[Q^-_{e(E)}f(Y)]+O_f(N^{-1/2}).
 \label{eq:clt-u-endpoint-replacement}
\end{equation}
Since $Q_{e(E)}-Q^-_{e(E)}=N^{-1}\varepsilon_{e(E)}$,  \eqref{eq:clt-cavity-product} and 
\eqref{eq:clt-X-Y}  further imply
\begin{equation}
 \widetilde T_E(f)=T_E(f)+O_f(N^{-1/2}),
 \qquad E\in\{A,B,C\}.
 \label{eq:clt-Ttilde-T}
\end{equation}
Combining \eqref{eq:clt-g-zero}, \eqref{eq:clt-g-prime-zero},
\eqref{eq:clt-cavity-Taylor} and \eqref{eq:clt-Ttilde-T}, we obtain
\begin{equation}
 T(f)=\theta m_f+\mathcal M_1 T(f)+O_f(N^{-1/2}) .
 \label{eq:clt-vector-Stein}
\end{equation}

Define the mode correlations $U_f$, $V_f$ and $D_f$ by
\begin{align}
 U_f&\coloneqq T_A(f)-4T_B(f)+3T_C(f),
 \label{eq:clt-Uf}\\
 V_f&\coloneqq2T_B(f)-3T_C(f),
 \label{eq:clt-Vf}\\
 D_f&\coloneqq T_A(f)-2T_B(f)+T_C(f).
 \label{eq:clt-Df}
\end{align}
%The source modes\footnote{\color{red}What is this?} are $\kappa,\zeta,a$, respectively. 
Let $I$ be the $3$-dimensional identity matrix and let

$$
P\coloneqq
\begin{pmatrix}
1&-4&3\\
0&2&-3\\
1&-2&1
\end{pmatrix}.
$$
By the definitions of $\kappa$ and $\zeta$, a direct matrix multiplication gives
$$
P(I-\mathcal M_1)
=
\begin{pmatrix}
1-\beta^2\kappa&0&0\\
-\beta^2\zeta&1-\beta^2\kappa&0\\
0&0&1-\alpha
\end{pmatrix}P.
$$
Since
$$
PT(f)=
\begin{pmatrix}
U_f\\
V_f\\
D_f
\end{pmatrix},
\qquad
P\theta=
\begin{pmatrix}
\kappa\\
\zeta\\
 \alpha/\beta^2
\end{pmatrix},
$$
by \eqref{eq:clt-vector-Stein} we reach 
\begin{align}
 (1-\beta^2\kappa)U_f
 &=\kappa m_f+O_f(N^{-1/2}),
 \label{eq:clt-Stein-U}\\
 (1-\beta^2\kappa)V_f-\beta^2\zeta U_f
 &=\zeta m_f+O_f(N^{-1/2}),
 \label{eq:clt-Stein-V}\\
 (1-\alpha)D_f
 &=(\alpha/\beta^2) m_f+O_f(N^{-1/2}).
 \label{eq:clt-Stein-D}
\end{align}
In view of \eqref{eq:clt-positive-denominators}, we have
\begin{align*}
 U_f=\frac{\kappa}{1-\beta^2\kappa}m_f+O_f(N^{-1/2}),\quad 
 D_f&=\frac{\alpha}{\beta^2(1-\alpha)}m_f+O_f(N^{-1/2}).
 \end{align*}
Substitution in \eqref{eq:clt-Stein-V} yields
\begin{equation*}
 V_f=\frac{\zeta}{(1-\beta^2\kappa)^2}m_f+O_f(N^{-1/2}).
 \label{eq:clt-V-solved}
\end{equation*}
From the definitions of $U_f$, $V_f$, and $D_f$, we obtain
\begin{equation*}
T_A(f)=-V_f-2U_f+3D_f.
\end{equation*}
Indeed, this follows by solving the three linear relations defining $U_f$, $V_f$, and $D_f$ for $T_A(f)$.
 Hence
\begin{align}
 \nu[X_{12}f(X_{12})]
 &=
 \left(
 \frac{3\alpha}{\beta^2(1-\alpha)}
 -\frac{2\kappa}{1-\beta^2\kappa}
 -\frac{\zeta}{(1-\beta^2\kappa)^2}
 \right)\nu[f'(X_{12})]+O_f(N^{-1/2})
 \nonumber\\
 &=\sigma^2\nu[f'(X_{12})]+O_f(N^{-1/2}).
 \label{eq:clt-scalar-Stein}
\end{align}

% The estimate
% $\nu[|Q_{12}|^3]=o(N^{-1})$ (see \eqref{eq:Nnu3}) and
% \eqref{eq:cavity-remainders} show that the remainders in
% \eqref{eq:cavity-U}--\eqref{eq:cavity-D} are $o(N^{-1})$ at $s=1$.
% Thus
% \begin{equation}
%  NU\longrightarrow\frac{\kappa}{1-\beta^2\kappa}.
%  \label{eq:clt-second-U}
% \end{equation}
% The equations for $V$ and $D$ then give
% \begin{align}
%  NV&\longrightarrow\frac{\zeta}{(1-\beta^2\kappa)^2},
%  \,ND \longrightarrow\frac{a}{1-\alpha}.
%  \label{eq:clt-second-D}
% \end{align}
% Since $A=-V-2U+3D$,
% \begin{equation}
%  N\nu[Q_{12}^2]=NA\longrightarrow
%  \frac{3a}{1-\alpha}
%  -\frac{2\kappa}{1-\beta^2\kappa}
%  -\frac{\zeta}{(1-\beta^2\kappa)^2}
%  =\sigma^2.
%  \label{eq:clt-second-moment-limit}
% \end{equation}
% In particular,
% \begin{equation}
%  \sigma^2\ge0.
%  \label{eq:clt-variance-nonnegative}
% \end{equation}

Set
\begin{align}
 C_N(t)\coloneqq\nu[\exp(\sqrt{-1}\, tX_{12})].
 \label{eq:clt-SN}
\end{align}
Then
\begin{align}
 C_N'(t)&=\nu[\sqrt{-1} X_{12}\exp(\sqrt{-1}\,tX_{12})].
 \label{eq:clt-SN-derivative}
\end{align}
For fixed $t_0>0$, the $O_f(N^{-1/2})$ error in
\eqref{eq:clt-scalar-Stein} is uniform for
$f(x)=\exp(\sqrt{-1}\,tx)$ and $|t|\le t_0$.  Therefore
\begin{align}
 e^{-\sigma^2 t^2/2} (e^{\sigma^2 t^2/2} C_N(t))'= C_N'(t)+\sigma^2tC_N(t)
&=O_{t_0}(N^{-1/2}),
 \qquad |t|\le t_0.
 \label{eq:clt-C-approx-ODE}
 % e^{-\sigma^2 t^2/2} (e^{\sigma^2 t^2/2} S_N(t))' = S_N'(t)+\sigma^2tS_N(t)
 % &=O_{t_0}(N^{-1/2}),
 % \qquad |t|\le t_0.
 % \label{eq:clt-S-approx-ODE}
\end{align}
By  using 
$C_N(0)=1$ and integrating this over the interval $[0,t]$, we obtain
\begin{align*}
&e^{\sigma^2t^2/2}C_N(t) -1 =    \int_0^t (e^{\sigma^2s^2/2}C_N(s))' {\mathrm ds}= O_{t_0}(N^{-1/2}).
%&e^{\sigma^2t^2/2}S_N(t) =    \int_0^t (e^{\sigma^2s^2/2}S_N(s))' {\mathrm ds}= O_{t_0}(N^{-1/2}).
\end{align*}
Thus, we obtain that as $N\rightarrow \infty$
\begin{equation}
 \sup_{|t|\le t_0}
 \left|C_N(t)-e^{-\sigma^2t^2/2}\right|
 \longrightarrow0. 
 %\quad 
 %\sup_{|t|\le t_0}|S_N(t)|\longrightarrow0.  
 \label{eq:clt-C-uniform}
\end{equation}
Since $\sigma^2>0$, the function $e^{-\sigma^2t^2/2}$ is the characteristic
function of $\mathcal N(0,\sigma^2)$. Since the pointwise convergence of characteristic functions implies the weak convergence, we have  
\[
 \sqrt N\,(R_{12}-q)\xrightarrow{\mathrm d}\mathcal N(0,\sigma^2).
\]
\end{proof}

\begin{remark}[The divergence of $\sigma^2$ as approaching the AT line]\label{remark: sigma diverge}
 Let $(\beta_n,h_n)_{n=1}^\infty$ and $\beta,h>0$ satisfy $\alpha_n\uparrow 1$, $\beta_n\to \beta$, and $h_n\to h$. We write $q_n,r_n,\kappa_n,\zeta_n$ for the corresponding parameters. By definition (see \eqref{eq:alpha} and \eqref{eq:cavity-parameters}),  
$
1-\beta_n^2\kappa_n
=
(1-\alpha_n)+2\beta_n^2(q_n-r_n).
$ 
Moreover, since $q-r = \mathbb E[\tanh^2(\beta \sqrt{q} Z+h)\sech^2(\beta \sqrt{q} Z+h)]>0$, we obtain $\lim_{n\to \infty} (q_n-r_n)=q-r>0$. Thus, we have  
$$
1-\beta_n^2\kappa_n\to 2\beta^2(q-r)>0
$$
as $n \rightarrow \infty$.
Hence
$$
-\frac{2\kappa_n}{1-\beta_n^2\kappa_n}
-\frac{\zeta_n}{(1-\beta_n^2\kappa_n)^2}
=O(1).
$$
Therefore, since $1-2q_n+r_n\to 1-2q+r = \mathbb E[\sech^4(\beta \sqrt{q} Z+h)]>0$, 
$$
\sigma_n^2
=
\frac{3(1-2q_n+r_n)}{1-\alpha_n}+O(1)
\to+\infty.
$$

In the case $\beta=1$ and $h=0$, see \cite{DuHuangCriticalOverlap} which identifies the fluctuation order and its scaling limit. 
\end{remark}
\appendix

\section{Finite-step proof of the specialized Guerra--Talagrand bound}
\label{app:specialGT}

\subsection{A finite-step differentiation formula}

The following lemma isolates the only hierarchical Gaussian calculation
needed in the proof.

\begin{lemma}
\label{lem:finite-step-interpolation-derivative}
Let $\mathcal S$ be finite, let $\mu$ be a nonzero finite measure on
$\mathcal S$, and fix
\[
0=m_0\leq m_1\leq\cdots\leq m_M\leq m_{M+1}=1.
\]
Let $\{G_{k,t}(\xi):\xi\in\mathcal S,\ t\in[0,1]\}_{0\le k\le M}$ be independent centered Gaussian processes with almost surely $C^1$ sample paths in $t \in (0,1)$, and let $K_{k,t}$ denote their covariance kernels. Assume that $K_{k,t}$ is bounded.  Put
\[
\mathcal G_t(\xi) \coloneqq \sum_{k=0}^M G_{k,t}(\xi),
\qquad
C_t^{(j)}(\xi,\eta) \coloneqq \sum_{k=0}^jK_{k,t}(\xi,\eta).
\]
For a deterministic function $V:\mathcal S\to\mathbb R$, define
\[
X_M(t)=\log\sum_{\xi\in \mathcal S}
\mu(\xi )e^{\mathcal G_t(\xi)+V(\xi)}
\]
and, recursively for $j=M,\ldots,1$,
\begin{equation}\label{eq:defXB1}
X_{j-1}(t) \coloneqq
\begin{cases}
\displaystyle
\frac1{m_j}\log\E_j e^{m_jX_j(t)},&m_j>0,\\[3mm]
\E_jX_j(t),&m_j=0,
\end{cases}
\end{equation}
where $\E_j$ integrates only the field $G_{j,t}$.  For
$0\leq j\leq M$, set
\[
\mathcal I_jA \coloneqq \frac{\E_j[Ae^{m_jX_j(t)}]}{\E_je^{m_jX_j(t)}},
\]
and let
\[
p_t(\xi) \coloneqq
\frac{\mu(\xi)e^{\mathcal G_t(\xi)+V(\xi)}}
{\sum_{\eta\in\mathcal S}
\mu(\eta)e^{\mathcal G_t(\eta)+V(\eta)}}.
\]
For deterministic functions $F$ and $0\leq j\leq M$, define
\begin{align*}
\E\langle F(\xi) \rangle_t
&\coloneqq \sum_{\xi\in\mathcal S}F(\xi)
\mathcal I_0\cdots\mathcal I_Mp_t(\xi),\\
\E\langle F(\xi,\eta)\rangle_{j,t}
&\coloneqq \sum_{\xi,\eta\in\mathcal S}F(\xi,\eta)
\mathcal I_0\cdots\mathcal I_j\left[
\bigl(\mathcal I_{j+1}\cdots\mathcal I_Mp_t(\xi)\bigr)
\bigl(\mathcal I_{j+1}\cdots\mathcal I_Mp_t(\eta)\bigr)
\right],
\end{align*}
where an empty operator product is the identity.  Then, for $0<t<1$,
\begin{equation}
\begin{aligned}
\frac{\mathrm d}{\mathrm dt} \E_0X_0(t)
={}&\frac1{2}\E\left\langle
\dot C_t^{(M)}(\xi,\xi)
\right\rangle_t-\frac1{2}\sum_{j=0}^M(m_{j+1}-m_j)
\E\left\langle
\dot C_t^{(j)}(\xi,\eta)
\right\rangle_{j,t}
\end{aligned}
\label{eq:finite-step-interpolation-derivative}
\end{equation}
where $\dot C_t^{(j)}(\xi,\eta) \coloneqq \frac{\partial}{\partial t} C_t^{(j)}(\xi,\eta)$.
\end{lemma}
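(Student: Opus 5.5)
Following the Guerra--Talagrand scheme, I would prove \eqref{eq:finite-step-interpolation-derivative} by backward induction on $j$, tracking $\frac{\mathrm d}{\mathrm dt}X_j(t)$ as a function of the remaining fields $G_{0,t},\dots,G_{j,t}$. Throughout I use Gaussian integration by parts in its time-dependent form: for a centered Gaussian process $G_t$ with covariance $K_t$ and a smooth $\Phi$ of polynomial growth,
\[
\frac{\mathrm d}{\mathrm dt}\E\Phi(G_t)=\frac12\sum_{\xi,\eta}\dot K_t(\xi,\eta)\,\E\,\partial_\xi\partial_\eta\Phi(G_t).
\]
This follows from \eqref{eq:GIBP} after writing $\dot G_t$ and noting that $\E[\dot G_t(\xi)G_t(\eta)]+\E[G_t(\xi)\dot G_t(\eta)]=\dot K_t(\xi,\eta)$. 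Since $\mathcal S$ is finite and the covariances are bounded, all differentiations under $\E_j$ are justified by dominated convergence. For $\Phi=X_M$ viewed as a function of the fields, $\partial_\xi X_M=p_t(\xi)$ and $\partial_\xi\partial_\eta X_M=p_t(\xi)\1_{\xi=\eta}-p_t(\xi)p_t(\eta)$.

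\textbf{Inductive claim.} For each $j$, as a function of $(G_{0,t},\dots,G_{j-1,t})$,
\[
\frac{\mathrm d}{\mathrm dt}X_{j-1}(t)=\mathcal I_j\bigl[\partial_tX_j\bigr]_{\text{explicit}}+\frac12\sum_{\xi,\eta}\dot K_{j,t}(\xi,\eta)\,\mathcal I_j\bigl[\partial_\xi\partial_\eta X_j+m_j\,\partial_\xi X_j\,\partial_\eta X_j\bigr],
\]
together with the identity
\[
\partial_\xi X_{j-1}=\mathcal I_j\,\partial_\xi X_j=\mathcal I_j\cdots\mathcal I_M\,p_t(\xi),
\]
which is a chain-rule computation like Lemma~\ref{lem:U-finite-step-derivatives}. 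The term $m_j\partial_\xi X_j\partial_\eta X_j$ comes from differentiating $\frac1{m_j}\log\E_je^{m_jX_j}$, whose second derivative produces $\mathcal I_j[\partial^2X_j]+m_j\bigl(\mathcal I_j[\partial X_j\partial X_j]-\mathcal I_j[\partial X_j]\,\mathcal I_j[\partial X_j]\bigr)$. Here I would use Gaussian integration by parts for the tilted measure $e^{m_jX_j}\,\mathrm d\Pbb_j$, i.e. apply the formula to $\Phi=e^{m_jX_j}$ and then divide.

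\textbf{Second-derivative identity.} Iterating the chain rule gives a closed form for the second derivatives $\partial_\xi\partial_\eta X_k$. Its diagonal part is $\mathcal I_{k+1}\cdots p_t(\xi)\1_{\xi=\eta}$, and its off-diagonal part is a telescoping sum over levels $\ell\ge k$ of
\[
(m_{\ell+1}-m_\ell)\cdot\bigl(\text{product of two }\mathcal I\text{-averaged Gibbs weights split at level }\ell\bigr),
\]
with the convention $m_{M+1}=1$ accounting for the $-p_tp_t$ term at the last level. I would prove this by downward induction, using the recursion for the second derivative from the step above.

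\textbf{Assembly.} Plug the second-derivative identity into the field-$j$ contribution, sum over $j$ from $0$ to $M$, and regroup by the level $\ell$. The covariance weight then becomes $\sum_{k\le\ell}\dot K_{k,t}=\dot C^{(\ell)}_t$, while the diagonal terms give $\frac12\E\langle\dot C^{(M)}_t(\xi,\xi)\rangle_t$. This yields exactly \eqref{eq:finite-step-interpolation-derivative}.

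The main obstacle is the bookkeeping in this regrouping. Two points need care: the $m_j$ variance term produced at stage $j$ must combine with the $-m_j$ contributions from deeper levels into the differences $m_{j+1}-m_j$, and the pair measure $\langle\cdot\rangle_{j,t}$ must factor exactly at level $j$. To fix the signs and the pairing convention I would first check the case $M=1$ by hand.
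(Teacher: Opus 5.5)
Your proposal is correct and is essentially the paper's argument. It uses the same chain rule $\partial_\xi X_{j-1}=\mathcal I_j\partial_\xi X_j$ and the same tilted Gaussian integration by parts, producing the level-$k$ contribution $\frac12\sum_{\xi,\eta}\dot K_{k,t}(\xi,\eta)\,\mathcal I_0\cdots\mathcal I_k\bigl[\partial_\xi\partial_\eta X_k+m_k\partial_\xi X_k\partial_\eta X_k\bigr]$, followed by the same telescoping and regrouping into $\dot C^{(j)}_t$. The only cosmetic difference is that you differentiate the Gaussian laws level by level, whereas the paper differentiates the sample paths and moves each $\dot G_{k,t}$ down to level $k$.

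One precision: the telescoped formula you state, with coefficient $m_{k+1}-m_k$ at $\ell=k$, is the closed form of $\partial_\xi\partial_\eta X_k+m_k\partial_\xi X_k\partial_\eta X_k$ (the paper's $Q_k^{\xi\eta}$). It is not the closed form of $\partial_\xi\partial_\eta X_k$ alone, whose $\ell=k$ term carries $m_{k+1}$ instead.
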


\begin{proof}
Throughout the proof, for fixed $t$, we regard $X_j(t)$ as a smooth function of the Gaussian coordinates
\[
\{G_{k,t}(\xi):0\leq k\leq j,\ \xi\in\mathcal S\}
\]
that have not yet been integrated out; the fields at levels
$j+1,\ldots,M$ have already been averaged out by the recursion.
Thus $\partial_xX_{j-1}=\mathcal I_j(\partial_xX_j)$ whenever $x$ is
a Gaussian coordinate not integrated by $\E_j$ or the time coordinate $t$.  Indeed, if
$Z_j=\E_je^{m_jX_j}$ and $m_j>0$, then
\begin{align}
\partial_xX_{j-1}
&=\frac1{m_j}\frac{\partial_xZ_j}{Z_j}
=\frac{\E_j[e^{m_jX_j}\partial_xX_j]}{Z_j}
=\mathcal I_j(\partial_xX_j).
\label{eq:finite-step-first-chain-rule}
\end{align}
The same identity for $m_j=0$ follows by differentiating
$X_{j-1}=\E_jX_j$.
From \eqref{eq:finite-step-first-chain-rule} it follows that
\begin{equation}\label{eq:dX=IP}
\partial_{G_{k,t}(\xi)}X_k =\mathcal I_{k+1}\cdots\mathcal I_Mp_t(\xi).
\end{equation}

For any differentiable $A$, the quotient rule gives the more general
identity
\begin{align}
\partial_y\mathcal I_jA
={}&\mathcal I_j(\partial_yA)
+m_j\mathcal I_j(A\partial_yX_j)
-m_j\mathcal I_j(A)\mathcal I_j(\partial_yX_j).
\label{eq:derivative-tilted-expectation}
\end{align}
Consequently,
\begin{align}
\partial_{xy}X_{j-1}
={}&\mathcal I_j(\partial_{xy}X_j)
+m_j\left\{
\mathcal I_j(\partial_xX_j\partial_yX_j)
-\mathcal I_j(\partial_xX_j)
 \mathcal I_j(\partial_yX_j)
\right\}.
\label{eq:finite-step-second-chain-rule}
\end{align}
Again, this includes $m_j=0$, since the last two terms then vanish.

By \eqref{eq:finite-step-first-chain-rule}, we have
\begin{equation}
\begin{aligned}
    \frac{\mathrm d}{\mathrm dt}\E X_0 
&=\mathcal{I}_0\dots \mathcal{I}_{M}\frac{\mathrm d}{\mathrm dt} X_{M}(t)\\
&=\sum_{k=0}^M\mathcal{I}_0\dots \mathcal{I}_{M}\left( \sum_{\xi\in \mathcal{S}} p_t(\xi) \dot{G}_{k,t}(\xi)\right)\\
&= \sum_{k=0}^M \sum_{\xi\in \mathcal{S}} \mathcal{I}_0\dots\mathcal{I}_k\left(\dot{G}_{k,t}(\xi) (\mathcal{I}_{k+1} \dots\mathcal{I}_{M} (p_t(\xi)) )\right).
\label{eq:gaussian-covariance-differentiation}
\end{aligned}
\end{equation}
When $m_k>0$, by Gaussian integration by parts (see \cite[Vol. 1, Sec. A.4]{Talagrand}), i.e. for centered jointly  Gaussian random variables  $X,Y_i$,  
$$\mathbb E[Xf(Y_1\dots Y_d)] = 
\sum_{i=1}^d \operatorname{Cov}(X,Y_i)\mathbb E[\partial_i f(Y)],$$ 
where $\operatorname{Cov}(\dot{G}_{k,t}(\xi),{G}_{k,t}(\eta))+\operatorname{Cov}(\dot{G}_{k,t}(\eta),{G}_{k,t}(\xi)) = \dot K_{k,t}(\xi,\eta)$, by \eqref{eq:dX=IP} we have
\begin{align*}
\sum_{\xi\in \mathcal S} \mathcal{I}_k\left(\dot{G}_{k,t}(\xi) (\mathcal{I}_{k+1} \dots\mathcal{I}_{M} p_t(\xi) )\right)& = \sum_{\xi\in \mathcal S} Z_k^{-1}  \mathbb E_k\left[e^{m_k X_k} \dot{G}_{k,t}(\xi) (\mathcal{I}_{k+1} \dots\mathcal{I}_{M} p_t(\xi) )\right] \\
 & =\frac{1}{2} \sum_{\xi,\eta\in \mathcal{S}} \dot K_{k,t}(\xi,\eta) (Z_k)^{-1}\mathbb E_k\left[\partial_{{G}_{k,t}(\eta)}  ({e^{m_k X_k}} (\mathcal{I}_{k+1} \dots\mathcal{I}_{M} p_t(\xi) )) \right]\\
 &=\frac{1}{2}\sum_{\xi,\eta\in \mathcal{S}} \dot K_{k,t}(\xi,\eta) (m_k Z_k)^{-1}\mathbb E_k\left[\partial_{{G}_{k,t}(\eta)} \partial_{{G}_{k,t}(\xi)} {e^{m_k X_k}}\right] .
\end{align*}
Let $H_{j,t}(\xi)=\sum^j_{r=0} G_{r,t}(\xi)$.   For fixed $k$, write
\[
u_{k}^\xi=\partial_{H_{k,t}(\xi)}X_k,
\qquad
h_{k}^{\xi\eta}
=\partial_{H_{k,t}(\xi)}\partial_{H_{k,t}(\eta)}X_k.
\]
The contribution obtained by differentiating only the
covariance of $G_{k,t}$ is
\begin{align*}
(m_k Z_k)^{-1}\E_k\!\left[
\partial_{G_k(\xi)}\partial_{G_k(\eta)}e^{m_kX_k}
\right]
&= 
\mathcal I_k\left(
h_k^{\xi\eta}+m_ku_k^\xi u_k^\eta
\right).
\end{align*}
For $m_k=0$, the same formula follows directly from
\eqref{eq:gaussian-covariance-differentiation}, with the second summand
equal to zero.  At $k=0$ it also holds because
$\mathcal I_0=\E_0$ and $m_0=0$. 
Hence, letting
\[
\mathcal P_k=\mathcal I_0\mathcal I_1\cdots\mathcal I_k,
\qquad
Q_k^{\xi\eta}=h_k^{\xi\eta}+m_ku_k^\xi u_k^\eta,
\]
by \eqref{eq:gaussian-covariance-differentiation} we have
\begin{equation}
\frac{\mathrm d}{\mathrm dt}\E X_0
=\frac12\sum_{k=0}^M\sum_{\xi,\eta\in\mathcal S}
\dot K_{k,t}(\xi,\eta)\mathcal P_kQ_k^{\xi\eta}.
\label{eq:finite-step-level-contributions}
\end{equation}

It remains to compute $\mathcal P_kQ_k^{\xi\eta}$.  At the terminal
level, direct differentiation gives
\[
u_M^\xi=p_t(\xi),\qquad
h_M^{\xi\eta}
=\mathbf 1_{\{\xi=\eta\}}p_t(\xi)-p_t(\xi)p_t(\eta).
\]
Since $m_{M+1}=1$, it follows that
\begin{equation}\label{eq:QM}
Q_M^{\xi\eta}
=\mathbf 1_{\{\xi=\eta\}}p_t(\xi)
-(m_{M+1}-m_M)u_M^\xi u_M^\eta.
\end{equation}
Note that \eqref{eq:dX=IP} implies $\partial_{H_{j-1,t}(\xi)}X_j = \partial_{H_{j,t}(\xi)}X_j$.
For $j=M,\ldots,1$, equations
\eqref{eq:finite-step-first-chain-rule} and
\eqref{eq:finite-step-second-chain-rule} yield
\begin{align*}
u_{j-1}^\xi
&=\mathcal I_ju_j^\xi, \quad 
h_{j-1}^{\xi\eta}
=\mathcal I_jh_j^{\xi\eta}
+m_j\left(
\mathcal I_j(u_j^\xi u_j^\eta)
-u_{j-1}^\xi u_{j-1}^\eta
\right),
\end{align*}
and hence
\[
Q_{j-1}^{\xi\eta}=h_{j-1}^{\xi\eta}+m_{j-1}u_{j-1}^\xi u_{j-1}^\eta
=\mathcal I_jQ_j^{\xi\eta}
-(m_j-m_{j-1})u_{j-1}^\xi u_{j-1}^\eta.
\]
By applying $\mathcal P_{j-1}$ to both sides of this equality, we have
\[
\mathcal P_{j-1}Q_{j-1}^{\xi\eta}
=\mathcal P_jQ_j^{\xi\eta}
-(m_j-m_{j-1})
\mathcal P_{j-1}(u_{j-1}^\xi u_{j-1}^\eta).
\]
Iterating from $j=M$ down to $j=k+1$ and using \eqref{eq:QM}
gives
\begin{align}
\mathcal P_kQ_k^{\xi\eta}
={}&\mathcal P_M
\left(\mathbf 1_{\{\xi=\eta\}}p_t(\xi)\right)
-\sum_{j=k}^M(m_{j+1}-m_j)
\mathcal P_j(u_j^\xi u_j^\eta).
\label{eq:finite-step-telescoped-hessian}
\end{align}
Note that similarly to \eqref{eq:dX=IP} it holds that $u_j^\xi =\mathcal I_{j+1}\cdots\mathcal I_Mp_t(\xi)$.
Substituting \eqref{eq:finite-step-telescoped-hessian} into
\eqref{eq:finite-step-level-contributions} therefore gives 
\begin{align*}
   \frac{\mathrm d}{\mathrm dt}\E X_0 
={}&\frac12\E\left\langle
\sum_{k=0}^M\dot K_{k,t}(\xi,\xi)
\right\rangle_t-\frac12\sum_{k=0}^M\sum_{j=k}^M(m_{j+1}-m_j)
\E\left\langle
\dot K_{k,t}(\xi,\eta)
\right\rangle_{j,t}\\
={}&\frac12\E\left\langle
\dot C_t^{(M)}(\xi,\xi)
\right\rangle_t
-\frac12\sum_{j=0}^M(m_{j+1}-m_j)
\E\left\langle
\dot C_t^{(j)}(\xi,\eta)
\right\rangle_{j,t},
\end{align*}
where we have used $\sum_{k=0}^M\sum_{j=k}^M\dot K_{k,t}(\xi,\eta)=\sum_{j=0}^M\sum_{k=0}^j \dot K_{k,t}(\xi,\eta)=\sum_{j=0}^M \dot C_t^{(j)}(\xi,\eta)$.
\end{proof}

\subsection{Application to the two-replica path}
Fix $N\geq1$, $\beta>0$, $h\in\mathbb R$, $s\in[0,1]$, and $q\in(0,1)$.
We use the same notation as in Section \ref{subsec:GT}. Recall that
\[
\xi_s(r)
=
\beta^2(1-s)qr+\frac{s\beta^2}{2}r^2.
\]
Let $H_{N,s}^{\mathrm{cen}}$ be a centered Gaussian field on
$\Sigma_N$ normalized by
\begin{equation}
\E H_{N,s}^{\mathrm{cen}}(\sigma)
H_{N,s}^{\mathrm{cen}}(\tau)
=N\xi_s(R(\sigma,\tau)).
\label{eq:appendix-exact-covariance}
\end{equation}
Set
\[
H_{N,s}(\sigma)=H_{N,s}^{\mathrm{cen}}(\sigma)
+h\sum_{i=1}^N\sigma_i,\quad 
\mathcal R_N=\{-1,-1+2/N,\ldots,1\}.
\]
For $v\in\mathcal R_N$, let
\begin{equation}
Z_{N,s}^{(2)}(v)=
\sum_{\substack{\sigma^1,\sigma^2\in\Sigma_N\\
R(\sigma^1,\sigma^2)=v}}
e^{H_{N,s}(\sigma^1)+H_{N,s}(\sigma^2)}.
\label{eq:appendix-constrained-partition}
\end{equation}

We prove \eqref{eq:GTfunctionalbound} as follows.

\begin{lemma}
\label{lem:specialGT}
For every $N\geq1$ and $v\in\mathcal R_N$,
\begin{equation}
\frac1N\E\log Z_{N,s}^{(2)}(v)
\leq\inf_{\lambda\in\mathbb R}\mathfrak P_s(\lambda,v).
\label{eq:appendix-GT-infimum}
\end{equation}
\end{lemma}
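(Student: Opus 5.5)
Fix $\lambda\in\mathbb R$; since the left side does not depend on $\lambda$, it suffices to prove $\frac1N\E\log Z^{(2)}_{N,s}(v)\le\mathfrak P_s(\lambda,v)$ and then take the infimum. The plan is a two-replica Guerra interpolation built on Lemma~\ref{lem:finite-step-interpolation-derivative}.

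\emph{Setup.} Take $\mathcal S=\{(\sigma^1,\sigma^2):R_{12}=v\}$ and $\mu$ the counting measure. Add the Lagrange term
\[
V(\xi)=\lambda\sum_i\sigma^1_i\sigma^2_i-\lambda Nv+h\sum_i(\sigma_i^1+\sigma_i^2),
\]
which vanishes identically on $\mathcal S$ apart from the field part, and this changes nothing.

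\emph{Interpolating field.} For $t\in[0,1]$ the field on $\mathcal S$ is
\[
\sqrt t\,\bigl(H^{\mathrm{cen}}_{N,s}(\sigma^1)+H^{\mathrm{cen}}_{N,s}(\sigma^2)\bigr)+\sqrt{1-t}\sum_{k=0}^M\sum_i\bigl(G^1_{k,i}\sigma^1_i+G^2_{k,i}\sigma^2_i\bigr).
\]
Here the level-$0$ piece carries the covariance $B_0=\beta^2(1-s)q\begin{pmatrix}1&1\\1&1\end{pmatrix}$ coming from the random external field, and the pieces at levels $k\ge1$ are i.i.d.\ over sites with covariance $B_k-B_{k-1}$, which is positive semidefinite by \eqref{eq:finite-step-vector-increments}. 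The levels $m_k$ are those of Section~\ref{subsec:GT}.

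\emph{Endpoints.} At $t=1$ the recursion $X_0$ gives $\E\log Z^{(2)}_{N,s}(v)$, because $m_{M+1}=1$ and the extra Gaussians are absent. At $t=0$ the constraint is the only coupling between sites. Drop the constraint using $\lambda(\sum_i\sigma^1_i\sigma^2_i-Nv)=0$ on $\mathcal S$ and $\mathcal S\subset\Sigma_N^2$; this is an upper bound, and the map $\mu\mapsto X_0$ is monotone. The sum then factorizes over sites into $N$ copies of the $f_\lambda$-recursion, giving $N(2\log2+\E U_s^{\lambda,v}(Y_0,Y_0)-\lambda v)$. The decoupling uses that each $\mathcal T_{m,C}$ applied to a sum over independent sites splits as a sum, because the Gaussians are independent across sites. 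This is the standard RPC factorization and should follow directly from the recursion \eqref{eq:U-conditional-recursion}. So the endpoint inequality holds once the derivative in $t$ is controlled.

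\emph{Derivative.} Apply \eqref{eq:finite-step-interpolation-derivative}. The covariance kernel at level $j$ is $C_t^{(j)}(\xi,\eta)=tN\,\Xi(R)+(1-t)N\sum_{a,b}\xi_s'(Q^v_{j,ab})R_{ab}$, where $\Xi$ collects $\xi_s(R_{ab})$ over replica pairs and $R_{ab}$ are the cross overlaps between $\xi$ and $\eta$. Diagonal terms are constant because $R_{12}=v$ is fixed, so the derivative equals $-\frac N2\sum_j(m_{j+1}-m_j)\E\langle\sum_{a,b}[\xi_s(R_{ab})-\xi_s'(Q_{j,ab})R_{ab}]\rangle_{j,t}$ plus explicit constants. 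Convexity of $\xi_s$ (since $s\ge0$) gives
\[
\xi_s(R)-\xi_s'(Q)R\ge \xi_s(Q)-\xi_s'(Q)Q,
\]
so the derivative is bounded above by a deterministic quantity. By the computation leading to \eqref{eq:GTcorrection}, that quantity equals $-N\mathcal L_s(v)$ after the Abel summation of $\sum_j(m_{j+1}-m_j)\theta(Q_j)$ into $\sum_j m_j(d_j-d_{j-1})$, with $\theta(Q)=\sum_{ab}(Q\xi'(Q)-\xi(Q))=d$ for the quadratic part. Integrating in $t$ then yields the bound.

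\emph{Main obstacle.} The hardest part is the bookkeeping of the constant terms. This means matching the diagonal contributions, including the $\xi_s(1)$ terms and the $\beta^2(1-s)q$ external-field level, against $\mathcal L_s(v)=\frac{s\beta^2}{2}(1-q^2)$. It also requires checking that the sign convention for $\iota_v$ makes $Q^v_j$ admissible for all cross-replica pairs. I would first verify the whole computation in the case $|v|\le q$, where there is a single $m=1$ level, and then extend it to the three-level case.
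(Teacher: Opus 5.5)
Your proposal is correct and follows the paper's proof: it uses the same interpolation between the coupled two-replica Hamiltonian and site-wise hierarchical fields with covariance increments $B_j-B_{j-1}$, the same $t=0$ bound obtained by dropping the constraint and factorizing over sites, and the same differentiation formula from Lemma~\ref{lem:finite-step-interpolation-derivative}. The only difference is bookkeeping: the paper adds scalar Gaussians $\sqrt{tN}\,y_j$ with variances $d_j-d_{j-1}$, so that $\dot C_t^{(j)}=\frac{s\beta^2N}{2}\sum_{k,\ell}(R(\sigma^k,\tau^\ell)-Q^v_{j,k\ell})^2$ and $\mathcal L_s(v)$ appears at the endpoint $t=1$, whereas you obtain $-\mathcal L_s(v)$ as a deterministic bound on the derivative via the tangent-line inequality for the quadratic $\xi_s$ and Abel summation, which is the same identity in a different form.
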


\begin{proof}
%Fix $\lambda\in\mathbb R$.  The monotonicity of the levels required
%by Lemma~\ref{lem:finite-step-interpolation-derivative}, as well as the
%positivity of all Gaussian variances below, was verified in
%\eqref{eq:finite-step-vector-increments} and
%\eqref{eq:finite-step-scalar-increments}.
In view of \eqref{eq:finite-step-vector-increments} and \eqref{eq:finite-step-scalar-increments}, we are able to choose independent Gaussian vectors $z_{i,j}\in\mathbb R^2$ and
independent Gaussian variables $y_j$ with
\begin{equation}\label{eq:covzy}
\E z_{i,j}z_{i,j}^{\mathsf T}=B_j-B_{j-1},
\qquad
\E y_j^2=d_j-d_{j-1}.
\end{equation}
Let $Y_{i,0}$ be independent copies of
$Y_0=h+\beta\sqrt{(1-s)q}\,Z$. Since
\[
B_0=\beta^2(1-s)q
\begin{pmatrix}1&1\\1&1\end{pmatrix},
\]
the vector $(Y_{i,0}-h,Y_{i,0}-h)$ has the covariance $B_0$.
Write
\[
{\mathcal{S}}_N(v)=\{(\sigma^1,\sigma^2)\in\Sigma_N^2:
R(\sigma^1,\sigma^2)=v\}.
\]
For $\boldsymbol\sigma=(\sigma^1,\sigma^2)\in{\mathcal{S}}_N(v)$, introduce
the independent Gaussian random variables $\{ G_{j,t}(\boldsymbol\sigma)\} _{j=0}^M$ with parameter $t \in [0,1]$ as
\begin{align*}
G_{0,t}(\boldsymbol\sigma)
\coloneqq {}&\sqrt t\bigl(H_{N,s}^{\mathrm{cen}}(\sigma^1)
+H_{N,s}^{\mathrm{cen}}(\sigma^2)\bigr)+\sqrt{1-t}\sum_{i=1}^N\sum_{a=1}^2
(Y_{i,0}-h)\sigma_i^a,\\
G_{j,t}(\boldsymbol\sigma)
\coloneqq {}&\sqrt{1-t}\sum_{i=1}^N\sum_{a=1}^2z_{i,j}^a\sigma_i^a
+\sqrt{tN}\,y_j,
\qquad 1\leq j\leq M,
\end{align*}
and the deterministic term
\[
V(\boldsymbol\sigma) \coloneqq h\sum_{i=1}^N(\sigma_i^1+\sigma_i^2)
+\lambda\sum_{i=1}^N\sigma_i^1\sigma_i^2-\lambda Nv.
\]
Let
\[
X_M(t) \coloneqq \log\sum_{\boldsymbol\sigma\in{\mathcal{S}}_N(v)}
\exp\left(\sum_{j=0}^MG_{j,t}(\boldsymbol\sigma)
+V(\boldsymbol\sigma)\right),
\]
define $\{ X_j(t)\} _{j=0}^M$ as in Lemma~\ref{lem:finite-step-interpolation-derivative},
and set
\[
\varphi_N(t) \coloneqq \frac1N\E X_0(t).
\]

We first compute $\varphi_N(t)$ at the endpoints $t=0,1$. 
Note that
\begin{align*}
X_M(0) &= \log\sum_{\boldsymbol\sigma\in{\mathcal{S}}_N(v)} \exp\left( \sum _{i=1}^N Y_{i,0}(\sigma _i^1 + \sigma _i^2) + \sum _{j=1}^M \sum _{i=1}^N (z_{i,j}^1 \sigma _i^1 + z_{i,j}^2 \sigma _i^2 ) +\lambda\sum_{i=1}^N\sigma_i^1\sigma_i^2-\lambda Nv \right) \\
&\leq \log \sum_{\boldsymbol\sigma \in\Sigma_N^2} \exp\left( \sum _{i=1}^N Y_{i,0}(\sigma _i^1 + \sigma _i^2) + \sum _{j=1}^M \sum _{i=1}^N (z_{i,j}^1 \sigma _i^1 + z_{i,j}^2 \sigma _i^2 ) +\lambda\sum_{i=1}^N\sigma_i^1\sigma_i^2-\lambda Nv \right) \\
&= \log \prod _{i=1}^N \sum _{\varepsilon _1, \varepsilon _2 = \pm 1}  \exp\left( \left( \varepsilon _1 \left( Y_{0,i} + \sum _{j=1}^M z_{i,j}^1 \right) + \varepsilon _2 \left( Y_{0,i} + \sum _{j=1}^M z_{i,j}^2 \right) + \lambda \varepsilon _1 \varepsilon _2 \right) \right) -\lambda N v \\
&= 2 N \log 2 + \sum _{i=1}^N f_\lambda \left( Y_{0,i} + \sum _{j=1}^M z_{i,j}^1 , Y_{0,i} + \sum _{j=1}^M z_{i,j}^2 \right) -\lambda N v .
\end{align*}
Since $G_j$ in Section \ref{subsec:GT} has the same law as $z_{i,j}$ (see \eqref{eq:covzy}), by the monotonicity of the map ${\mathcal T}_{m,C}$ defined in \eqref{eq:GTsemigroup}, similarly to \eqref{eq:relationXU} we obtain
\[
X_0(0) \leq 2 N \log 2 + \sum _{i=1}^N U_s^{\lambda ,v} (Y_{0,i}, Y_{0,i}) -\lambda N v .
\]
This inequality and the independence of the variables $Y_{0,i}$ yield 
\begin{equation}
\varphi_N(0)
\leq
2\log2+\E U_s^{\lambda,v}(Y_0,Y_0)-\lambda v.
\label{eq:GT-endpoint-zero}
\end{equation}

We consider  the case $t=1$.
An explicit calculation implies
\begin{align*}
X_M(1) &= \log \sum_{\boldsymbol\sigma\in{\mathcal{S}}_N(v)} \exp \left( H_{N,s}(\sigma ^1) + H_{N,s}(\sigma ^2) + \sqrt{N} \sum _{j=1}^M y_j\right) \\
&= \log Z_{N,s}^{(2)}(v) + \sqrt{N} \sum _{j=1}^M y_j,
\end{align*}
where $Z_{N,s}^{(2)}(v)$ is defined in \eqref{eq:appendix-constrained-partition}. 
Since for each $x\in {\mathbb R}$ it holds that
\[
\begin{cases}
\displaystyle
\frac1{m_j}\log\E e^{m_j(x+\sqrt N\,y_j)}
=x+\frac{Nm_j}{2}(d_j-d_{j-1}),&m_j>0,\\[3mm]
\displaystyle
\E (x+\sqrt N\,y_j)=x,&m_j=0 ,
\end{cases}
\]
by \eqref{eq:defXB1} we have
\begin{align}
\varphi_N(1)
&=\frac1N\E\log Z_{N,s}^{(2)}(v)+\frac12\sum_{j=1}^Mm_j(d_j-d_{j-1})
\notag\\
&=\frac1N\E\log Z_{N,s}^{(2)}(v)+\mathcal L_s(v),
\label{eq:GT-endpoint-one}
\end{align}
where $\mathcal L_s(v)$ is defined in \eqref{eq:defLv}.

We calculate the derivative of $\varphi _N$.
Denote the covariance of $\sum _{i=0}^j G_{i,t} (\boldsymbol\sigma)$ and $\sum _{i=0}^j G_{i,t} (\boldsymbol\tau)$ by $C_t^{(j)}(\boldsymbol\sigma,\boldsymbol\tau)$.
Since \eqref{eq:appendix-exact-covariance} and $\E (Y_{0,i} - h)^2 = \beta ^2 (1-s) q$ imply
\[
\E \left[ G_{0,t} (\boldsymbol\sigma) G_{0,t} (\boldsymbol\tau)\right] = tN\sum_{k,\ell=1}^2\xi_s (R(\sigma^k,\tau^\ell)) + (1-t) \beta ^2 (1-s) q N \sum_{k,\ell=1}^2 R(\sigma^k,\tau^\ell)
\]
and \eqref{eq:covzy} implies
\begin{align*}
\E \left[ G_{j,t} (\boldsymbol\sigma) G_{j,t} (\boldsymbol\tau)\right] &= (1-t) \sum _{i=1}^N \sum_{k,\ell=1}^2 \sigma _i ^k \tau_i^{\ell} \E [z_{i,j}^k z_{i,j}^\ell ] + tN \E y_j^2 \\
&= (1-t) \sum _{i=1}^N \boldsymbol\sigma _i ^{\mathsf T} (B_j -B_{j-1}) \boldsymbol\tau _i + tN (d_j -d_{j-1}),
\end{align*}
by \eqref{eq:defB} and \eqref{eq:defd} we have
\begin{align*}
C_t^{(j)}(\boldsymbol\sigma,\boldsymbol\tau)&={} tN\sum_{k,\ell=1}^2\xi_s (R(\sigma^k,\tau^\ell)) + (1-t) \beta ^2 (1-s) q N \sum_{k,\ell=1}^2 R(\sigma^k,\tau^\ell) \\
&\quad + (1-t) \sum _{i=1}^N \boldsymbol\sigma _i ^{\mathsf T} (B_j -B_0) \boldsymbol\tau _i + tN (d_j -d_0)\\
&={}tN\sum_{k,\ell=1}^2\xi_s(R(\sigma^k,\tau^\ell))
+(1-t)N\sum_{k,\ell=1}^2\xi_s'(Q_{j,k\ell}^v)R(\sigma^k,\tau^\ell)
 +\frac{s\beta ^2 tN}{2}\sum_{k,\ell=1}^2 (Q_{j,k\ell}^v)^2\\
&=N\sum_{k,\ell=1}^2\xi_s'(Q_{j,k\ell}^v)R(\sigma^k,\tau^\ell)+ t\left(\frac{s \beta^2 N}{2}\sum_{k,\ell=1}^2 (R(\sigma^k,\tau^\ell) -Q_{j,k\ell}^v )^2\right),
\end{align*}
due to $\xi_s(x)-\xi_s(y)-\xi_s'(y)(x-y)=\frac{s\beta^2}{2}(x-y)^2$. 
Since for $\boldsymbol\sigma\in{\mathcal{S}}_N(v)$
\[
\bigl(R(\sigma^k,\sigma^\ell )\bigr)_{k,\ell =1}^2
=\begin{pmatrix}1&v\\v&1\end{pmatrix}=Q_M^v,
\]
it follows that
\[
\dot C_t^{(M)}(\boldsymbol\sigma,\boldsymbol\sigma)
=\frac{s\beta^2 N}{2}\sum_{k,\ell=1}^2
(R(\sigma^k,\sigma^\ell) -Q_{M,k\ell}^v)^2=0.
\]
Hence, Lemma~\ref{lem:finite-step-interpolation-derivative} 
now yields
\begin{equation}
\varphi_N'(t)
=-\frac{s\beta^2}{4}
\sum_{j=0}^M(m_{j+1}-m_j)
\E\left\langle
\sum_{k,\ell=1}^2(R(\sigma^k,\tau^\ell) -Q_{j,k\ell}^v)^2
\right\rangle_{j,t}
\leq0.
\label{eq:special-GT-interpolation-derivative}
\end{equation}
Combining \eqref{eq:GT-endpoint-zero},
\eqref{eq:GT-endpoint-one}, and
\eqref{eq:special-GT-interpolation-derivative}, and taking the infimum over $\lambda$ proves
\eqref{eq:appendix-GT-infimum}.
\end{proof}

\section{Price's theorem}\label{sec:Price}

\begin{theorem}[Price's theorem \cite{Price}]\label{thm:Price}
For each $t\in {\mathbb R}$, let $X(t)$ be an $n$-dimensional centered Gaussian vector with covariance matrix $C(t) = (C_{ij}(t))_{i,j =1}^n$. 
Assume that $C_{ij}(t)$ is continuously differentiable for all $i,j =1,2, \dots ,n$. 
Then, for any $f\in C_b^2 ({\mathbb R}^n)$, it holds that % \footnote{\textcolor{red}{SN: The theorem permits singular covariance matrices, but the proof below uses $\det C(t)$ and $C(t)^{-1}$. It needs an approximation $C\mapsto C+\varepsilon I$ followed by $\varepsilon\downarrow0$, or a singular-covariance proof via characteristic functions.}}
\[
\frac{\mathrm d}{\mathrm dt} \E f(X(t)) = \frac{1}{2} \sum _{i,j =1}^n \left( \frac{\mathrm d}{\mathrm dt} C_{i,j}(t)\right) \E \left[ \frac{\partial ^2 f}{\partial x_i \partial x_j} (X(t)) \right] .
\]
\end{theorem}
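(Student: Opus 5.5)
The plan is to prove Price's theorem first in the nondegenerate case via the heat equation for the Gaussian density, and then remove nondegeneracy by a perturbation $C(t)\mapsto C(t)+\varepsilon I$. It is cleaner, and avoids inverses altogether, to argue through the Fourier side. Write $\E f(X(t))$ using the characteristic function $\E e^{\sqrt{-1}\langle\theta,X(t)\rangle}=e^{-\frac12\theta^{\mathsf T}C(t)\theta}$. This identity holds for singular $C(t)$ as well, so no determinant appears.

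\textbf{Step 1 (Schwartz test functions).} First take $f$ in the Schwartz class. By Fourier inversion,
\[
\E f(X(t))=(2\pi)^{-n}\int_{\mathbb R^n}\hat f(\theta)\,e^{-\frac12\theta^{\mathsf T}C(t)\theta}\,\mathrm d\theta,
\]
which follows from Fubini since $\hat f\in L^1$. We differentiate under the integral sign. The $t$-derivative of the integrand is $-\frac12\sum_{i,j}\dot C_{ij}(t)\theta_i\theta_j\hat f(\theta)e^{-\frac12\theta^{\mathsf T}C\theta}$. Because $C(t)\succeq0$, the exponential factor is at most $1$, and $|\theta|^2|\hat f|$ is integrable. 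On compact $t$-intervals $\dot C$ is bounded, so dominated convergence justifies the exchange.

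Next use $-\theta_i\theta_j\hat f(\theta)=\widehat{\partial_{ij}f}(\theta)$. Fourier inversion then converts each term back into $\E[\partial_{ij}f(X(t))]$, which gives the formula for Schwartz $f$.

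\textbf{Step 2 (general $f\in C_b^2$).} Pass from Schwartz $f$ to $f\in C_b^2$ by approximation. Let $f_k=(f*\rho_{1/k})\chi(\cdot/k)$, where $\rho_{1/k}$ is a mollifier and $\chi$ is a smooth cutoff equal to $1$ on the unit ball. Then $f_k$ is Schwartz, and $f_k,\partial f_k,\partial^2 f_k$ are uniformly bounded. Moreover $\partial_{ij}f_k\to\partial_{ij}f$ and $f_k\to f$ pointwise, the first because $\partial^2 f$ is continuous.

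Apply the integrated form of Step 1:
\[
\E f_k(X(t))-\E f_k(X(t_0))=\frac12\int_{t_0}^t\sum_{i,j}\dot C_{ij}(u)\,\E\,\partial_{ij}f_k(X(u))\,\mathrm du .
\]
Dominated convergence, in both $\E$ and $\mathrm du$, lets $k\to\infty$.

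Finally, the map $u\mapsto\E\,\partial_{ij}f(X(u))$ is continuous. This holds because $X(u)\to X(u_0)$ in law, since the characteristic functions converge, and $\partial_{ij}f$ is bounded and continuous. The integrand is therefore continuous, and the fundamental theorem of calculus yields the claimed derivative identity.

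\textbf{Main obstacle.} The main point of care is the possible singularity of $C(t)$, which is exactly the issue of dense formulas with $C^{-1}$. The characteristic-function route handles it uniformly. The only other delicate step is the justification of the approximation in Step 2, in particular continuity in $u$ of $\E\,\partial_{ij}f(X(u))$, which needs weak continuity of the Gaussian laws rather than density convergence.
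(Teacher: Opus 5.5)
Your proof is correct. It shares the paper's core mechanism: write $\E f(X(t))$ as a Fourier integral against $e^{-\frac12\langle\xi,C(t)\xi\rangle}$, differentiate in $t$, and replace $\xi_i\xi_j$ times the transform of $f$ by minus the transform of $\partial_{ij}f$. The difference is where you regularize: you smooth the test function, while the paper smooths the covariance.

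The paper asserts a reduction to compactly supported $f\in C_b^2$. It then replaces $C(t)$ by $C(t)+\varepsilon I$ so that a density and Plancherel are available, differentiates, and lets $\varepsilon\downarrow0$. You instead work with the characteristic function, which needs no density, and prove the identity for Schwartz $f$. There, $|\theta|^2|\hat f(\theta)|\in L^1$ and $e^{-\frac12\theta^{\mathsf T}C\theta}\le1$ justify differentiating under the integral and both Fourier inversions, even when $C(t)$ is singular. You then pass to $C_b^2$ through the integrated identity, dominated convergence, and weak continuity of the law of $X(u)$.

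The paper's write-up is shorter, but yours supplies the steps it leaves implicit:
\begin{itemize}
\item the reduction from $C_b^2$ to compact support;
\item the meaning of $\int\mathcal F(\partial_{ij}f)(\xi)e^{-\frac12\langle\xi,C(t)\xi\rangle}\,\mathrm d\xi$ when $C(t)$ is singular, since for compactly supported $f\in C^2$ this transform need not be integrable;
\item the interchange of $\frac{\mathrm d}{\mathrm dt}$ with $\varepsilon\downarrow0$.
\end{itemize}
These points matter here. The paper applies the theorem to $(x_1,x_2)\mapsto\tanh(x_1+h)\tanh(x_2+h)$, which is not compactly supported, with a covariance that degenerates at $v=q$.
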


\begin{proof}
It suffices to prove the claim for compactly supported $f\in C_b^2({\mathbb R}^n)$.  
Let ${\mathcal F}g$ be the Fourier transform of $g$, i.e.
\[
({\mathcal F} g) (\xi ) = \frac{1}{(2\pi )^{n/2}}\int _{\mathbb R^n} g(x) e^{-\sqrt{-1}x\cdot \xi } dx, \quad \xi \in {\mathbb R}^n ,
\]
and let $C^\varepsilon (t) := \varepsilon I + C(t)$ where $\varepsilon >0$ and $I$ is the unit matrix.
Denote an $n$-dimensional centered Gaussian vector with a covariance matrix $C^\varepsilon (t)$ by $X^\varepsilon (t)$ for $t\in {\mathbb R}$.
Then, by the Plancherel equality we have
\begin{align*}
\E f(X^\varepsilon (t)) &= \int _{{\mathbb R}^n} f(x) \cdot \frac{1}{\sqrt{(2\pi)^n \det C^\varepsilon(t)}} \exp \left( -\frac{1}{2}\langle x , C^\varepsilon (t)^{-1} x \rangle \right) dx \\
&= \frac{1}{(2\pi )^{n/2}} \int _{{\mathbb R}^n} ({\mathcal F}f)(\xi ) \exp \left( -\frac{1}{2} \langle \xi , C^\varepsilon (t)\xi \rangle \right) d\xi,
\end{align*}
where $\langle \cdot ,\cdot \rangle$ denotes the inner product on ${\mathbb R}^n$ and $\det C^\varepsilon(t)$ denotes the determinant of $C^\varepsilon(t)$.  Since $X^\varepsilon(t)$ converges in distribution to $X(t)$ and $C^\varepsilon(t)$ converges componentwise to $C(t)$ as $\varepsilon\downarrow0$, differentiating with respect to $t$ and then passing to the limit $\varepsilon\downarrow0$ yields  
\begin{align*}
\frac{\mathrm d}{\mathrm dt} \E f(X(t)) &= - \frac{1}{2(2\pi )^{n/2}}\sum _{i,j =1}^n \left( \frac{\mathrm d}{\mathrm dt} C_{i,j}(t)\right) \int _{{\mathbb R}^n} ({\mathcal F}f)(\xi ) \xi _i \xi _j \exp \left( -\frac{1}{2} \langle \xi , C(t)\xi \rangle \right) d\xi,
\end{align*}
where $\xi = (\xi _1, \xi _2 ,\dots , \xi _n)$.
Applying the fact
\[
\xi _i \xi _j ({\mathcal F}f)(\xi ) = - {\mathcal F} \left( \frac{\partial ^2 f}{\partial x_i \partial x_j} \right)(\xi ) ,
\]
to this equality, we obtain
\begin{equation}\label{eq:Price01}
\frac{\mathrm d}{\mathrm dt} \E f(X(t)) = \frac{1}{2(2\pi )^{n/2}}\sum _{i,j =1}^n \left( \frac{\mathrm d}{\mathrm dt} C_{i,j}(t)\right) \int _{{\mathbb R}^n} {\mathcal F} \left( \frac{\partial ^2 f}{\partial x_i \partial x_j} \right)(\xi ) \exp \left( -\frac{1}{2} \langle \xi , C(t)\xi \rangle \right) d\xi .
\end{equation}
Similarly to the above, it holds that
\begin{align*}
&\frac{1}{(2\pi )^{n/2}} \int _{{\mathbb R}^n} {\mathcal F} \left( \frac{\partial ^2 f}{\partial x_i \partial x_j} \right)(\xi ) \exp \left( -\frac{1}{2} \langle \xi , C(t)\xi \rangle \right) d\xi \\
&= \lim _{\varepsilon \downarrow 0} \int _{{\mathbb R}^n} \left( \frac{\partial ^2 f}{\partial x_i \partial x_j} \right) (x) \cdot \frac{1}{\sqrt{(2\pi)^n \det C^\varepsilon(t)}} \exp \left( -\frac{1}{2}\langle x , C^\varepsilon (t)^{-1} x \rangle \right) dx\\
&= \E \left[ \frac{\partial ^2 f}{\partial x_i \partial x_j} (X(t)) \right] .
\end{align*}
This equality and \eqref{eq:Price01} yield the assertion.
\end{proof}

\section*{Acknowledgements}

S.K.\ acknowledges support from JSPS KAKENHI Grant Numbers JP22H00099 and JP23K20801.
S.N.\ acknowledges support from JSPS KAKENHI Grant Numbers JP24K16937 and JP25K00911.

\section*{Statement on the Use of Artificial Intelligence}
Shortly after posting \cite{KusuokaNakajima1}, inspired by the arguments in \cite{TalagrandParisi}, the authors realized that the flatness of the Guerra--Talagrand one-step  RSB bound near the replica-symmetric point is sufficient to obtain a preliminary weak concentration estimate for the overlap, as explained in Subsection~\ref{subsec:proof-outline}. This weak concentration can be upgraded, using Talagrand's cavity method, to the optimal $N^{-1}$ bound. Thus, the main remaining task was to establish the required flatness of the one-step RSB bound.

To investigate this flatness,  we asked ChatGPT 5.6 Sol to construct a proof, with reference to Lopatto's paper \cite{Lopatto}. After several iterations, ChatGPT produced an argument that led to the proof in this paper. The authors subsequently revised the argument in detail and take full responsibility for its correctness. We also used Codex to formalize the proof in Lean 4 \cite{brabra}.

\end{document}